\DeclareMathOperator{\rep}{rep}
\DeclareMathOperator{\Rec}{Rec}
\DeclareMathOperator{\Def}{Def}
\DeclareMathOperator{\val}{val}
\DeclareMathOperator{\dom}{dom}
\DeclareMathOperator{\sign}{sign}
\theoremstyle{plain}
\newtheorem{theorem}{Theorem}
\newtheorem{corollary}[theorem]{Corollary}
\newtheorem{prop}[theorem]{Proposition}
\newtheorem{fact}[theorem]{Fact}
\newtheorem{facts}[theorem]{Facts}
\newtheorem{lemma}[theorem]{Lemma}
\newtheorem{conjecture}[theorem]{Conjecture}
\theoremstyle{definition}
\newtheorem{definition}[theorem]{Definition}
\newtheorem{remark}[theorem]{Remark}
\newtheorem{example}[theorem]{Example}
\newtheorem*{conj}{Conjecture}
\newtheorem*{thm}{Theorem}
\begin{document}
\title[An analogue of Cobham's theorem for GDIFS]{An analogue of Cobham's theorem for graph directed iterated function systems}

\author{\'Emilie CHARLIER}
\author{Julien LEROY}
\author{Michel RIGO}
\address{Universit\'e de Li\`ege, Institut de math\'ematique, Grande traverse 12 (B37),
4000 Li\`ege, Belgium\newline
echarlier@ulg.ac.be, M.Rigo@ulg.ac.be}
\address{Mathematics Research Unit, FSTC, University of Luxembourg, 6, rue Coudenhove-Kalergi, 
L-1359 Luxembourg, Luxembourg\newline
julien.leroy@uni.lu}
\subjclass[2010]{28A80 (primary), and 03D05, 11B85, 11K16, 28A78, 68Q70 (secondary)} 

\begin{abstract}
Feng and Wang showed that two homogeneous iterated function systems in $\mathbb{R}$ 
with multiplicatively independent contraction ratios necessarily have different attractors. 
In this paper, we extend this result to graph directed iterated function systems in $\mathbb{R}^n$ with contraction ratios 
that are of the form $\frac{1}{\beta}$, for integers $\beta$.
By using a result of Boigelot {\em et al.}, this allows us to give a proof of a conjecture of Adamczewski and Bell.
In doing so, we link the graph directed iterated function systems to Büchi automata.
In particular, this link extends to real numbers $\beta$.
We introduce a logical formalism that permits to characterize sets of $\mathbb{R}^n$ 
whose representations in base $\beta$ are recognized by some Büchi automata. 
This result depends on the algebraic properties of the base: $\beta$ being a Pisot or a Parry number.
The main motivation of this work is to draw a general picture representing 
the different frameworks where an analogue of Cobham's theorem is known. 
\end{abstract}

\maketitle

\section{Introduction}

According to Hutchinson~\cite{Hutchinson:1981}, any iterated function system (IFS) $\Phi = \{\phi_i\}_{i=1}^t$ 
admits a unique compact set $K$ (called the {\em attractor}) such that $K = \phi_1(K) \cup \cdots\cup \phi_t(K)$. 
Such a set is said to be {\em self-similar} or {\em self-affine} when the contraction mappings are affine functions.
For instance, it is well known that the usual Triadic Cantor set is self-affine for $\phi_1(x)=x/3$ and $\phi_2(x)=x/3+2/3$.

IFSs play an important role in fractal geometry, notably thanks to the so-called Collage theorem stating 
that any subset of a complete metric space can be approximated (with arbitrary precision) by a self-similar set~\cite{BD:1985}. 

Given an IFS, a natural question is whether its attractor can be obtained as the attractor of another IFS. 
This question has recently been completely answered by Feng and Wang~\cite{Feng&Wang:2009} 
in the case of homogeneous self-affine sets in $\mathbb{R}$ 
(all contraction mappings have the same contraction ratio) that satisfy the {\em open set condition} (OSC).

\begin{thm}[Feng and Wang~\cite{Feng&Wang:2009}]
Let $X$ be the attractor of a homogeneous IFS $\Phi = \{\phi_i\}_{i=1}^N$ in the Euclidean space $\mathbb{R}$ 
with contraction ratio $r_\Phi$ and satisfying the OSC. Consider $\psi(x) = \lambda x +d$.
\begin{enumerate}
	\item	Suppose that the Hausdorff dimension of X, $\dim_H(X)$, is less than $1$ and that $\lambda \neq 0$. 
			If $\psi(X) \subset X $, then $\frac{\log|\lambda|}{\log|r_\Phi|} \in \mathbb{Q}$;
	\item	Suppose $\dim_H(X)=1$ and $X$ is not a finite union of intervals and $\lambda > 0$. 
			If $\min(X) \in \psi(X) \subset X$, then $\frac{\log \lambda}{\log|r_\Phi|} \in \mathbb{Q}$.
\end{enumerate}
\end{thm}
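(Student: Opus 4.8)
The plan is to analyse the multiplicative semigroup of ratios at which affine copies of $X$ sit inside $X$. Normalising so that $X\subseteq[0,1]$ with $\min X=0$ and $\max X=1$, I set
\[
G=\{\,t\in(0,1]:\ tX+c\subseteq X\ \text{for some}\ c\in\mathbb{R}\,\}.
\]
First I would collect generators of $G$. Each finite composition of $k$ maps of the homogeneous system $\Phi$ contracts $X$ into itself by the ratio $|r_\Phi|^k$, so $\rho:=|r_\Phi|\in G$; and the hypothesis $\psi(X)\subseteq X$ gives $|\lambda|\in G$. Since $t_1X+c_1\subseteq X$ and $t_2X+c_2\subseteq X$ imply $t_1t_2X+(t_1c_2+c_1)\subseteq X$, the set $G$ is closed under multiplication, whence $G\supseteq\{\rho^m|\lambda|^n:m,n\ge0\}$. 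Assuming, for a contradiction, that $\theta:=\log|\lambda|/\log\rho\notin\mathbb{Q}$, the incommensurability of $-\log\rho$ and $-\log|\lambda|$ makes $\{m(-\log\rho)+n(-\log|\lambda|):m,n\ge0\}$ asymptotically dense in a half-line, so the ratios $\rho^m|\lambda|^n$ are dense in some interval $(0,\delta)$. Thus $X$ would carry affine self-copies at a set of contraction ratios dense near $0$, and the task is to show this is incompatible with the hypotheses of each case. (The degenerate case $|\lambda|=1$ forces $\psi$ to be a symmetry of $X$ and is handled separately.)

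\textbf{Part (1).} Here $s:=\dim_H(X)\in(0,1)$, and I would use the standard fact that under the OSC the set $X$ is Ahlfors $s$-regular with $0<\mathcal{H}^s(X)<\infty$. The strategy is to control the upper spherical density $\Theta(x)=\limsup_{r\to0}\mathcal{H}^s(X\cap B(x,r))/(2r)^s$, which by regularity lies between two positive constants. Self-similarity alone makes $\Theta$ recur along the geometric scales $|r_\Phi|^k r$; feeding in the copies $gX+c_g\subseteq X$ for the dense family of ratios $g$ and comparing the density at scales $r$ and $gr$ should pin $\Theta$ to a single value along a set of scales dense near $0$ at a point $x_0$ realising the maximal density. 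A tangent (blow-up) limit of $X$ at $x_0$ would then be an $s$-regular subset of $\mathbb{R}$ invariant under a dense group of dilations, i.e.\ a cone; but the only cones in $\mathbb{R}$ are $\{0\}$, a half-line, or a line, all of integer dimension, contradicting $0<s<1$.

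\textbf{Part (2).} Now $s=1$, so $\mathcal{H}^1$ agrees with Lebesgue measure and $\operatorname{Leb}(X)>0$. The extra hypothesis $\min X\in\psi(X)$ together with $\lambda>0$ forces $\psi(0)=0$, so every generator can be taken to fix the left endpoint: $\rho^m\lambda^n X\subseteq X$ for a set of ratios dense in $(0,\delta)$, each copy anchored at $0$. Writing $F(r)=\operatorname{Leb}(X\cap[0,r])$, the inclusion $gX\subseteq X$ gives $F(gr)\ge gF(r)$, hence $F(gr)/(gr)\ge F(r)/r$; letting $g$ run through the dense family shows that $r\mapsto F(r)/r$ has a limit $D$ as $r\to0$ equal to its supremum. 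The plan is then to upgrade this to a rigidity statement via tangent measures of $\operatorname{Leb}|_X$ at $0$: the dense scaling relations force the tangent structure to be invariant under a dense group of dilations, which for an exact-dimensional measure of dimension $1$ should identify it with Lebesgue measure, so that $X$ contains an interval $[0,\eta]$; self-similarity then spreads this interval over $X$, making $X$ a finite union of intervals, contrary to hypothesis.

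The hard part, common to both cases, will be the rigidity step that turns ``self-copies at a dense set of ratios'' into ``invariant under a continuous scaling, hence trivial''. The difficulty is that the embeddings furnish only inclusions $gX+c_g\subseteq X$, giving one-sided density inequalities rather than the equalities one would like, so the contradiction has to be extracted by selecting copies that realise the extremal density and then controlling the limits of rescalings (tangent sets and tangent measures). It is exactly at this point that the open set condition is essential—to rule out measure-theoretic overlaps that could otherwise absorb the extra symmetry—and that the excluded degenerate cases ($\dim_H X<1$ in (1); $X$ not a finite union of intervals, together with the endpoint anchoring coming from $\min X\in\psi(X)$, in (2)) are precisely what prevent the trivial scale-invariant answers.
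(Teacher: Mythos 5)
Your proposal addresses a statement that this paper only quotes from Feng and Wang (no proof is given in the paper), so it can only be judged on its own merits; on those merits it is a strategy outline rather than a proof, and the steps it leaves open are precisely the substance of the theorem. The opening reduction is fine: the semigroup of ratios is closed under multiplication, and irrationality of $\log|\lambda|/\log|r_\Phi|$ does make the ratios $\rho^m|\lambda|^n$ dense in some $(0,\delta)$. But in Part (1) everything after that is conditional: the inclusions you have are $gX+c_g\subseteq X$ with translations $c_g$ you do not control, so they do not pass to a blow-up limit at a point $x_0$; you would need to construct $x_0$ lying inside a nested sequence of such copies (and handle compactness of the family of tangent sets) before any dilation-invariance of a tangent set can be asserted, and you would also need to justify that a point ``realising the maximal density'' exists and interacts correctly with copies at scales $r$ versus $gr$. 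None of this is carried out, and you yourself flag it as ``the hard part''; flagging it does not discharge it.

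In Part (2) there is, in addition, a step that is false as stated: having tangent measure equal to Lebesgue measure at $0$ (equivalently, density $1$ at $0$) does \emph{not} imply $X\supseteq[0,\eta]$ for some $\eta>0$ --- a compact set can have full density at a point while its complement has gaps accumulating at that point. Ironically, a correct and much shorter route is available from what you already established: once you have anchored inclusions $gX\subseteq X$ for a \emph{dense} set of $g\in(0,\delta)$, compactness of $X$ gives $gX\subseteq X$ for \emph{every} $g\in(0,\delta]$, and then $\bigl\{g\max(X):g\in(0,\delta]\bigr\}\subseteq X$ forces $[0,\delta\max(X)]\subseteq X$ directly, with no tangent measures at all. (Note also that producing a composition of the $\phi_i$ fixing $\min X$ needs a small argument when $r_\Phi<0$.) But even granting the interval, your final step --- ``self-similarity then spreads this interval over $X$, making $X$ a finite union of intervals'' --- is asserted, not proved, and it is not routine: from $[0,\delta]\subseteq X$ and $X=\bigcup_i\phi_i(X)$ one only gets that each piece $\phi_w(X)$ contains an interval anchored at $\phi_w(0)$, which does not by itself bound the number of connected components of $X$. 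That implication (nonempty interior, or an interval at the left end, forces a finite union of intervals under OSC) is essentially the remaining content of case (2) and would have to be proved.
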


Under a more restrictive separation condition (the so-called {\em strong separation condition}), 
Elekes, Keleti and M\'athé~\cite{EKM:2010} proved an extension of this result for self-similar sets in higher dimension 
and without the homogeneous hypothesis.  
In their setting, if $r_1,\dots,r_t$ denote the different contraction ratios of the IFS and if $\psi$ 
is a similarity with contraction ratio $r_\psi$ such that $\psi(X) \subset X$, 
then $\log |r_\psi|$ is a linear combination of $\log |r_1|$, \dots, $\log |r_t|$ with rational coefficients.

A way to understand these results is the following: If a ``non-simple" set $X \subset \mathbb{R}$ 
(meaning $X$ is not a finite union of intervals) is the attractor of two different IFSs, 
then the contraction ratios of these two IFSs have to be strongly linked.

Surprisingly enough, a rather similar result has recently been proved by Adamczewski and Bell~\cite{Adamczewski&Bell:2011}. 
If $X$ is a compact set in $[0,1]$ and if $b\geq 2$ is an integer, the {\em $b$-kernel} of $X$ 
is the collection of sets $(b^k X - a) \cap [0,1]$ for $a,k \in \mathbb{N}$, $0 \leq a < b^k$.
Such a set $X$ is said to {\em $b$-self-similar} if its $b$-kernel is finite.

\begin{thm}[Adamczewski and Bell~\cite{Adamczewski&Bell:2011}]
Let $b,b' \geq 2$ be two integers such that $\frac{\log b}{\log b'} \notin \mathbb{Q}$. 
A compact set  $X \subset [0,1]$ is simultaneously $b$- and $b'$-self-similar 
if and only if it is a finite union of intervals with rational endpoints.
\end{thm}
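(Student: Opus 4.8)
The plan is to prove the two implications separately, the reverse one being by far the substantial direction. For the \emph{if} direction, suppose $X$ is a finite union of closed intervals whose endpoints share a common denominator $q$. A direct computation shows that for any $k$ and any $a$ with $0\le a<b^k$, the set $(b^k X-a)\cap[0,1]$ is again a finite union of intervals, and each of its endpoints is either $0$, $1$, or of the form $(b^k p-aq)/q$, hence lies in the finite set $\{0,1/q,\dots,(q-1)/q,1\}$. Since there are only finitely many finite unions of intervals with endpoints in a fixed finite set, the $b$-kernel of $X$ is finite; the same argument works verbatim for $b'$, so $X$ is simultaneously $b$- and $b'$-self-similar.

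For the converse, the first step is to reinterpret $b$-self-similarity geometrically. The key observation is that $(b^k X-a)\cap[0,1]=b^k\bigl(X\cap[a/b^k,(a+1)/b^k]\bigr)-a$ is exactly the rescaled restriction of $X$ to a $b$-adic subinterval, so the finite $b$-kernel $\{K_1=X,K_2,\dots,K_m\}$ records the finitely many local shapes of $X$. Decomposing each $K_i$ over the $b$ first-level subintervals yields relations $K_i=\bigcup_{a=0}^{b-1}\tfrac1b(K_{\sigma(i,a)}+a)$ for a suitable index map $\sigma$. This exhibits $X$ as the distinguished component of the attractor of a graph directed iterated function system whose maps $x\mapsto(x+a)/b$ all have contraction ratio $1/b$; since the pieces sit in the essentially disjoint intervals $[a/b,(a+1)/b]$, the open set condition holds automatically. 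This is precisely the link furnished by Boigelot \emph{et al.}: the $b$-self-similar sets are exactly those recognized in base $b$ by a Büchi automaton.

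Now suppose $X$ is simultaneously $b$- and $b'$-self-similar. By the above it is the attractor-component of two such graph directed systems, with contraction ratios $1/b$ and $1/b'$ respectively, both satisfying the open set condition. The crux is then a graph-directed analogue of the Feng--Wang theorem: if $X$ is not a finite union of intervals, its two realizations force $\log b/\log b'\in\mathbb{Q}$. Following the dichotomy in the Feng--Wang statement, I would argue through the Hausdorff dimension. If $\dim_H(X)<1$, the ratio-$1/b$ and ratio-$1/b'$ structures are incompatible under multiplicative independence, as in part~(1); if $\dim_H(X)=1$ while $X$ is not a finite union of intervals, the same conclusion follows from part~(2). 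Since $\log b/\log b'\notin\mathbb{Q}$ is assumed, both alternatives are excluded, and $X$ must be a finite union of intervals. The rationality of the endpoints is then immediate from $b$-self-similarity alone: each endpoint is a point whose zoom-orbit visits only the finitely many shapes $K_1,\dots,K_m$, so its base-$b$ expansion is eventually periodic and hence the endpoint is rational.

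I expect the main obstacle to be the rigidity step of the previous paragraph. The Feng--Wang theorem is stated for ordinary homogeneous IFS, whereas our setting produces genuinely graph-directed systems with several nodes; extending the argument to this multi-node framework — and in particular controlling the delicate dimension-one case where $X$ fails to be a finite union of intervals — is where the real work lies. It is precisely there that the algebraic hypothesis $\log b/\log b'\notin\mathbb{Q}$ must be exploited, and establishing this graph-directed analogue of Feng--Wang is the natural place to concentrate the technical effort before the deduction above becomes routine.
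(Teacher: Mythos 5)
Your translation of $b$-self-similarity into a graph directed system is sound and matches the route taken in the paper (it is exactly Theorem~\ref{thm:self-similar implique digraph}, followed by the automaton correspondence of Theorem~\ref{thm:digraph automate}), and your treatment of the easy direction and of the rationality of endpoints is fine. But the heart of your argument has a genuine gap: the rigidity step. You reduce everything to ``a graph-directed analogue of the Feng--Wang theorem'' and then explicitly concede that establishing it is ``where the real work lies.'' That analogue is not available: Theorem~\ref{thm: IFS} (Feng--Wang) concerns a \emph{single} homogeneous IFS on $\mathbb{R}$ satisfying the open set condition, and its proof rests on density/measure arguments attached to one self-similar set. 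In a genuinely multi-node GDIFS each component $K_i$ satisfies an equation involving the \emph{other} components, so the local structure of $X$ near a point is not governed by a single contraction semigroup, and the Feng--Wang dimension dichotomy (including the delicate hypothesis $\min(X)\in\psi(X)\subset X$ in the dimension-one case) does not transfer. Deferring the crux to this unproven extension leaves the theorem unproved; indeed, one of the points of the paper is that this graph-directed rigidity statement is a \emph{consequence} of the argument, not an available tool for it.

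The paper closes exactly this gap by going through automata rather than through fractal geometry: from the GDIFS one gets a trim closed B\"uchi automaton accepting the base-$b$ expansions of $X$ (Theorem~\ref{thm:digraph automate}), a closed trim automaton is automatically \emph{weak} (Remark~\ref{rem:closedweak}), and then the rigidity is supplied by the already-established theorem of Boigelot, Brusten, Bruy\`ere, Jodogne, Leroux and Wolper (Theorem~\ref{thm cobham version weak buchi}): a set simultaneously weakly $b$- and $b'$-recognizable for multiplicatively independent $b,b'$ is definable in $\langle\mathbb{R},\mathbb{Z},+,\le\rangle$, hence by quantifier elimination a finite union of polyhedra with rational vertices --- in dimension one, a finite union of intervals with rational endpoints. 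Note also that you credit Boigelot \emph{et al.}\ with the GDIFS--automaton link, which is in fact the paper's own contribution; their result is the Cobham-type theorem for weak automata, which is precisely the ingredient your proposal is missing. To repair your proof, replace the hoped-for Feng--Wang extension by this automata-theoretic step; as written, the proposal establishes the translation layers but not the theorem.
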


They conjectured an equivalent result in higher dimension.

\begin{conj}[Adamczewski and Bell~\cite{Adamczewski&Bell:2011}]
Let $b,b' \geq 2$ be two integers such that $\frac{\log b}{\log b'} \notin \mathbb{Q}$. 
A compact set $X \subset [0,1]^n$ is simultaneously $b$- and $b'$-self-similar 
if and only if it is a finite union of polyhedra whose vertices have rational coordinates.
\end{conj}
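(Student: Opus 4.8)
The plan is to reduce the conjecture to two results: the analogue of Cobham's theorem for graph directed iterated function systems (GDIFS) with contraction ratios $1/\beta$ developed in this paper, and the characterization, due to Boigelot \emph{et al.}, of the subsets of $\mathbb{R}^n$ recognized by Büchi automata in two multiplicatively independent bases. The argument rests on a dictionary, to be set up first, between three descriptions of a compact set $X \subseteq [0,1]^n$: being $b$-self-similar (finite $b$-kernel); being the attractor of a GDIFS all of whose maps have the form $x \mapsto (x+d)/b$ with $d \in \{0, \dots, b-1\}^n$; and having its set of base-$b$ expansions recognized by a (weak deterministic) Büchi automaton. The finiteness of the $b$-kernel furnishes the finitely many nodes of the graph and states of the automaton, while the subdivision $X = \bigcup_d \phi_d(X_d)$ of $X$ into its rescaled subpieces provides the edges and transitions.

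The implication from right to left in the conjecture is the routine one. If $X$ is a finite union of polyhedra with rational vertices, choose a common denominator $q$ for all the vertices and write each face as $\langle n_i, x\rangle \leq c_i$ with integer normal $n_i$ and $c_i \in \frac{1}{q}\mathbb{Z}$. Applying $x \mapsto b^k x - a$ turns the face constant into $b^k c_i - \langle n_i, a\rangle$, again a rational of denominator dividing $q$; after clipping to $[0,1]^n$, such a face genuinely cuts the cube only when this constant lies in the bounded range of $\langle n_i, \cdot\rangle$ over $[0,1]^n$, where it can take only finitely many values. Hence only finitely many clipped sets occur and the $b$-kernel is finite. The identical argument applies to $b'$, so every such $X$ is simultaneously $b$- and $b'$-self-similar.

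For the converse, assume $X$ is simultaneously $b$- and $b'$-self-similar with $\log b / \log b' \notin \mathbb{Q}$, that is, with $b$ and $b'$ multiplicatively independent. Via the dictionary, $X$ is at once the attractor of a GDIFS of ratio $1/b$ and of a GDIFS of ratio $1/b'$, and equivalently its expansions are recognized by Büchi automata in both bases. The GDIFS analogue of Cobham's theorem --- the extension of Feng and Wang (Theorem~1), and of the Elekes--Keleti--M\'ath\'e refinement, to the graph directed setting in $\mathbb{R}^n$ --- is exactly the rigidity statement forbidding a nondegenerate common structure for multiplicatively independent ratios. Feeding the two-base recognizability into the theorem of Boigelot \emph{et al.} then yields that $X$ is definable in the first-order structure $\langle \mathbb{R}, +, <, \mathbb{Z}\rangle$; and the set $X$, being compact and definable there, is a finite union of polyhedra with rational vertices, which is the sought conclusion.

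The main obstacle is the GDIFS analogue of Cobham's theorem itself. Unlike a single homogeneous IFS, a GDIFS couples the self-affine relations across several nodes at once, so the dimension-rigidity and renewal arguments underlying Feng and Wang cannot be run node by node. The crux is to control the contraction ratios accumulated along the cycles of each defining graph and to show that, unless $\log b / \log b'$ is rational, the two graph directed structures of multiplicatively independent ratios cannot share a common attractor beyond the polyhedral ones; ruling out the more permissive overlaps available in the graph directed framework, and aligning the two graphs of incommensurable ratios, is where the genuine difficulty lies.
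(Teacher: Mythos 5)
Your operative chain of deductions is exactly the paper's: a dictionary identifying $b$-self-similar sets (kernel elements as nodes/states, the subdivision relation as edges/transitions) with attractors of GDIFSs with maps $\mathbf{x}\mapsto(\mathbf{x}+\mathbf{a})/b$ and with sets whose expansions are accepted by a suitable B\"uchi automaton; then, for a set that is self-similar in two multiplicatively independent bases, feeding the two-base recognizability into the theorem of Boigelot \emph{et al.}\ to get definability in $\langle\mathbb{R},\mathbb{Z},+,\le\rangle$, and finally quantifier elimination (Ferrante--Rackoff) to identify definable sets with finite unions of rational polyhedra. The easy direction (rational polyhedra are $b$-self-similar for every $b$) you even argue in more detail than the paper does.

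However, your closing paragraph reveals a misconception that, taken at face value, leaves your proof incomplete by your own account. You claim the ``main obstacle'' is a GDIFS analogue of Cobham's theorem in the style of Feng--Wang, requiring control of contraction ratios along cycles and renewal-type arguments, and you never prove such a statement. But in the architecture you (and the paper) set up, no such theorem is an \emph{input}: the only rigidity ingredient is the already-known theorem of Boigelot, Brusten, Bruy\`ere, Jodogne, Leroux and Wolper on sets weakly recognizable in two multiplicatively independent integer bases, which is quoted, not proved. The GDIFS rigidity statement (the extension of Feng--Wang to this setting) is then a \emph{corollary} of the dictionary plus that theorem --- this is precisely how the paper strengthens Feng--Wang --- not a prerequisite for it. So either you trust the Boigelot result, in which case your proof is complete and coincides with the paper's and your last paragraph should be deleted, or you genuinely require the direct GDIFS rigidity argument, in which case you have a gap, since that argument appears nowhere in your proposal.

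One technical point you gloss over and should make explicit: Boigelot \emph{et al.}'s theorem concerns \emph{weakly} recognizable sets, so the dictionary must produce a \emph{weak} B\"uchi automaton, not merely a B\"uchi automaton. The paper secures this by observing that the automaton built from the kernel/GDIFS can be taken trim with \emph{all} states accepting (``closed''), and that a closed trim B\"uchi automaton is automatically weak, since each strongly connected component lies inside the set of accepting states. You assert weakness (and, unnecessarily, determinism) in passing; without the closedness observation this step is unjustified.
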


%As particular cases of $k$-self similar sets, they consider a notion of {\em $k$-automatic sets} that are the adherence of sets obtained by some finite automata.

Even more surprisingly, a third similar result having an important meaning in the framework of IFSs has recently been proved, 
but in a theoretical computer science 
setting~\cite{Boigelot&Brusten&Bruyere:2008, Boigelot&Brusten&Leroux:2009,Boigelot&Brusten:2009}: 
It concerns sets of points in $\mathbb{R}^n$ whose representations in some integer base $b$ 
are accepted by some particular Büchi automaton. 
In this paper we provide a bridge between these three results. 
This can be achieved by use of graph directed iterated functions systems (GDIFS) and this allows us to provide extensions 
of the results in the three frameworks, for instance by proving Adamcweski and Bell's conjecture 
(which has also independently been proved by Chan and Hare~\cite{hare2013}) 
and by extending the IFS results to a large class of GDIFS.
Furthermore, we also extend a logical characterization of recognizable sets in some non integer bases, 
thus providing a fourth framework to study them.

Let us give a few more details about the result of Boigelot {\em et al.}.
A {\em B\"uchi automaton} is a finite state automaton with an accepting procedure adapted to infinite words~\cite{PerrinPin04}.
Given an integer base $b \geq 2$, one can therefore study subsets $X$ of $\mathbb{R}^n$ 
such that the base $b$ representations of the elements of $X$ are accepted by some Büchi automaton. 
Such a set is said to be {\em $b$-recognizable}.
These sets have been well studied and in particular, they
can be characterized by some first order formula in the structure
$\langle \mathbb{R},\mathbb{Z},+,\le \rangle$ extended with a special
predicate related to the chosen base \cite{BRW98}.
This logical characterization has for instance found applications in verification
of systems with unbounded mixed variables taking integer or real values. 
In that setting, timed automata or hybrid systems are considered
\cite{Boigelot&Bronne&Rassart:1997,Boigelot&Jodogne&Wolper:2005}.

{\em Weak Büchi automata} are a subclass of Büchi automata that mainly behaves like automata 
accepting finite words~\cite{Loding:2001}. 
Sets recognized by weak automata are said to be {\em weakly recognizable}.

\begin{thm}[Boigelot, Brusten, Bruyère, Jodogne, Leroux 
and Wolper~\cite{Boigelot&Jodogne&Wolper:2001,Boigelot&Brusten&Bruyere:2008,Boigelot&Brusten&Leroux:2009}]
Let $b,b' \geq 2$ be two integers such that $\frac{\log b}{\log b'} \notin \mathbb{Q}$. 
A set $X \subset \mathbb{R}^n$ is simultaneously weakly $b$- and $b'$-recognizable 
if and only if it is definable by a first order formula in the structure $\langle \mathbb{R}, \mathbb{Z}, +, \leq \rangle$.
\end{thm}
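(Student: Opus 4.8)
The plan is to establish the two implications separately, the backward direction being a structural closure argument and the forward direction carrying the whole Cobham-type difficulty. Throughout I write $V_b$ for the special base-$b$ predicate of the logical characterization recalled above \cite{BRW98}, so that $b$-recognizability amounts to definability in $\langle \mathbb{R}, \mathbb{Z}, +, \leq, V_b \rangle$, and weak $b$-recognizability amounts to definability in the fragment of this structure that the weak (equivalently, B\"uchi-and-co-B\"uchi) acceptance condition cuts out.

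For the implication ($\Leftarrow$), I would argue by structural induction on a formula $\varphi$ over $\langle \mathbb{R}, \mathbb{Z}, +, \leq \rangle$ defining $X$, proving at each step that the defined set is weakly $b$-recognizable \emph{simultaneously for every} integer base $b \geq 2$. The atomic formulas—linear (in)equalities $\sum a_i x_i \leq c$ with integer coefficients and the integrality constraints $x_i \in \mathbb{Z}$—are each recognized in base $b$ by an automaton built from a finite carry/remainder analysis, and these automata are weak because their acceptance only tests an eventual behaviour of the digit stream rather than a genuine $\omega$-regular one. The inductive step uses closure of the weakly recognizable class under Boolean operations and under the projection attached to existential quantification; the one delicate point is that projection a priori leaves the weak class, so I would either exhibit a determinisation that stays weak for the automata at hand or reduce $\varphi$ to a quantifier-simplified normal form adapted to this mixed real/integer theory.

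The implication ($\Rightarrow$) is the heart of the matter. Assuming $X$ is weakly $b$- and weakly $b'$-recognizable, the characterization gives definitions of $X$ in the weak fragments over both $\langle \mathbb{R}, \mathbb{Z}, +, \leq, V_b \rangle$ and $\langle \mathbb{R}, \mathbb{Z}, +, \leq, V_{b'} \rangle$, and the goal is to \emph{eliminate both} base predicates, i.e. to show such a set is already definable in the base-free structure. My strategy is to extract from the base-$b$ automaton a finite graph directed self-affine decomposition of $X$ with contraction ratio $1/b$—exactly the bridge to GDIFS that the paper sets up—so that the local geometry of $X$ is governed by finitely many rescaled-and-translated residual pieces (the real analogue of the $b$-kernel), and symmetrically a decomposition with ratio $1/b'$ from the base-$b'$ automaton.

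The decisive and hardest step is the rigidity argument fusing these two structures. From the two finite self-affine decompositions I would show that $X$ has only finitely many distinct tangent directions along its frontier, and that each such direction, being stable simultaneously under scaling by $1/b$ and by $1/b'$, is pinned down by a number-theoretic constraint: the additive group $\{m \log b + n \log b' : m,n \in \mathbb{Z}\}$ is dense in $\mathbb{R}$ precisely because $\frac{\log b}{\log b'} \notin \mathbb{Q}$, and this density forces the admissible slopes and the breakpoints of the decomposition into $\mathbb{Q}$ rather than into a proper multiplicative extension of it. It then follows that $X$ is a finite Boolean combination of rational half-spaces and integrality constraints, which is exactly the class of $\langle \mathbb{R}, \mathbb{Z}, +, \leq \rangle$-definable sets. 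I expect this collapse to be the main obstacle: it is the real-number, weak-automaton incarnation of the Cobham–Semenov phenomenon, and making the density argument interact cleanly with the finite-kernel structure—especially lifting it from $\mathbb{R}$ to $\mathbb{R}^n$, which I would handle by analysing definable fibres in each coordinate direction in Semenov's style—is where essentially all the work concentrates.
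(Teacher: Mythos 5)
The paper does not prove this statement at all: it is imported as a black box from the three cited works of Boigelot \emph{et al.} and then combined with Theorem~\ref{thm:digraph automate} and Theorem~\ref{thm:self-similar implique digraph} to derive the Adamczewski--Bell conjecture. So there is no proof of the paper's to compare yours against; your proposal has to stand on its own, and in its present form it does not.

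The genuine gap is the ``rigidity argument'' at the heart of your forward direction. You assert that the two finite self-affine decompositions force $X$ to have finitely many tangent directions and breakpoints along its frontier, which density of $\{m\log b + n\log b' : m,n\in\mathbb{Z}\}$ then pins to $\mathbb{Q}$. But speaking of tangent directions and breakpoints presupposes that the frontier of $X$ is already tame, which is essentially the conclusion to be proved: a set that is weakly recognizable in a \emph{single} base can be genuinely fractal (the triadic Cantor set of Example~\ref{ex: cantor} is weakly $3$-recognizable), and such a set has no tangent structure anywhere. The entire content of the theorem is that recognizability in two multiplicatively independent bases excludes this fractal behaviour; the density of the scaling group is indeed the right arithmetic ingredient, but you give no mechanism by which it acts on a possibly fractal set to yield a contradiction, so the argument is circular where it matters most. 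The published proofs do this by a fine analysis of the residual languages and the topological/measure-theoretic structure of the boundary of the set recognized by the two weak deterministic automata; moreover the passage from $\mathbb{R}$ to $\mathbb{R}^n$, which you dispatch in one clause ``in Semenov's style'', is the subject of an entire separate paper among the cited references. Two smaller gaps: the GDIFS extraction you invoke corresponds to Theorem~\ref{thm:digraph automate}, which applies to \emph{closed trim} automata and hence to compact sets, whereas a weakly recognizable $X\subseteq\mathbb{R}^n$ need be neither bounded nor closed, so a reduction (splitting off the integer part and invoking the classical Cobham--Semenov theorem for it) is missing; and in the backward direction the projection problem you flag is real, though the second fix you mention --- quantifier elimination for $\langle\mathbb{R},\mathbb{Z},+,\leq\rangle$ in the style of Ferrante and Rackoff, which this paper uses elsewhere --- does resolve it.
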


Observe that subsets of $\mathbb{R}^n$ that are definable by a first order formula in the structure 
$\langle \mathbb{R}, \mathbb{Z}, +, \leq \rangle$ are exactly the periodic repetitions of finite unions of polyhedra 
whose vertices have rational coordinates.

As already announced, in this paper, we obtain a better and complete understanding of the links 
and interactions existing between those results.
The different notions under consideration  are not exactly equivalent but very closely related. 
By means of GDIFSs, we are able to determine exactly what are the similarities and the differences between them.
 
GDIFSs generalize IFSs in the sense that the similarities can be applied accordingly to a labeled directed graph $(V,E)$~\cite{Edgar08}. 
As for IFSs, such a system admits a unique 
{\em attractor} which is a list of non-empty compact sets $(K_v)_{v \in V}$ such that for all $u \in V$,
\[
	K_u = \bigcup_{e: u \to v} S_e(K_v).
\]

Using these GDIFS, we obtain the following picture of inclusions, in the extended framework of some real base $\beta >1$.

\begin{figure}[h!tbp]
\centering
\begin{pspicture}(-3,-1)(10,5.5)
\pspolygon(-1,0.6)(-1,4)(5,4)(5,0.6)
\pspolygon(-1.2,0.4)(-1.2,4.2)(10,4.2)(10,0.4)
\rput(9,0.7){GDIFS}
\psellipse(2,2.1)(5,2.9)
\psellipse(6.3,2.3)(2.8,1)
\rput(8,2.3){IFS}
\rput(0.15,3.5){$\beta$-self-similar}
\rput(0,3){$\Updownarrow$}
\rput(2,3){Theorems~\ref{thm: digraph implique self-similar} and~\ref{thm:self-similar implique digraph}}
\rput(0.55,2.5){particular GDIFS}
\rput(0,2){$\Updownarrow$}
\rput(1.3,2){Theorem~\ref{thm:digraph automate}}
\rput(0.5,1.5){particular weakly}
\rput(0.25,1.1){$\beta$-recognizable}
\rput(2,0.1){$\beta$-recognizable $\Leftrightarrow$ $\beta$-definable}
\rput(2,-0.3){Theorems~\ref{the:recdef} and~\ref{thm: definissable implique reconnaissable}}
\end{pspicture}
\caption{General picture of our contributions.}
\label{fig: general picture}
\end{figure}
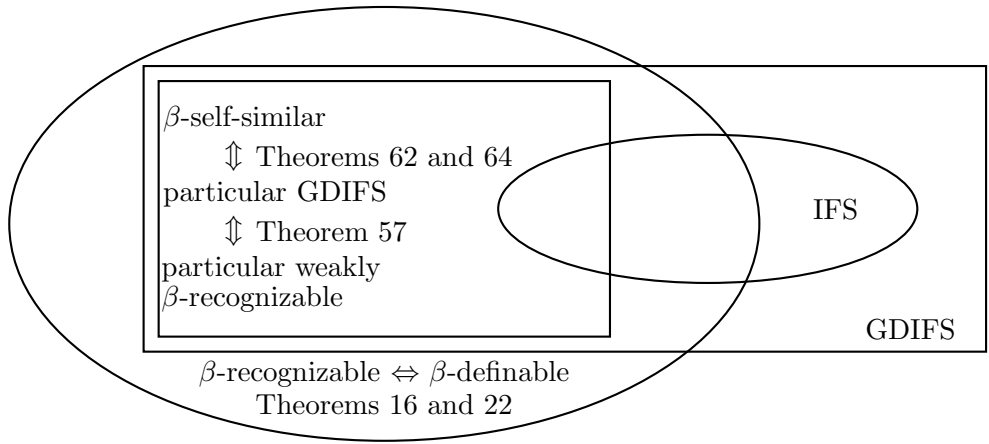

It is interesting to determine the strengths and weaknesses of the different approaches. 
On the one hand, the IFS method is very powerful from the point of view of contraction ratios: 
these can be any number smaller than 1 although we are restricted to reciprocal of positive integers in the automata framework.
Furthermore we can simultaneously work with different contraction ratios~\cite{EKM:2010}. 
From an automata point of view, this would correspond to represent numbers by simultaneously using several bases.
On the other hand, the homogeneous case of IFS corresponds to recognizable sets whose underlying automaton has a unique state.
In this direction, B\"uchi automata provide a more general result.

\bigskip

Observe that both the result of Adamwzewski and Bell and the one of Boigelot {\em et al.} 
belong to a large class of results that are generalisations of a famous theorem in discrete mathematics 
and theoretical computer science: Cobham's theorem. 
As in the real case described above, one can consider sets of integers 
whose base $b$ expansions are accepted by some finite automaton. 
These sets are also said to be {\em $b$-recognizable}. 
In 1969, Cobham obtained a fundamental result initiating the systematic study of $b$-recognizable sets 
and showing that the $b$-recognizability depends on the chosen base \cite{Cobham:1969}. 
Let $b,b'\ge 2$ be two integers such that $\frac{\log b}{\log b'} \notin \mathbb{Q}$. 
If a subset $X$ of $\mathbb{N}$ is simultaneously $b$-recognizable and $b'$-recognizable, 
then it is a finite union of arithmetic progressions.

Cobham's theorem led to a deep study, for instance by considering non-standard numeration systems: 
It is meaningful to consider other numeration systems to handle new sets of integers recognizable by finite automata. 
Making use of numeration systems like the Zeckendorf system based on
the Fibonacci sequence, other kinds of sets of integers can be
recognized by means of some finite automaton.
The bibliography in \cite{Durand:2011} provides many pointers to various extensions of Cobham's theorem.

In the setting of recognizable subsets of $\mathbb{R}^n$, as for the representation of integers 
where one has the opportunity to consider non-standard numeration systems, we are not limited to base $b$ expansions. 
We can consider other ways to represent real numbers.
A large class of extensively studied numeration systems is made of the so-called $\beta$-numeration
systems \cite[Chap.~7]{Lothaire:2002} and \cite{Parry60}, 
where the idea is to replace the integer base with a real base $\beta>1$.
In such a system, a real number usually can have more than one
representation as an infinite sequence and this redundancy can be
exploited in information theory with the use of beta encoders for analogues to digital conversion, 
see for instance \cite{Daubechies2006,Daubechies2010}.
The results mentioned in Figure~\ref{fig: general picture} depend on the algebraic properties of $\beta$ 
and Pisot numbers play a special role.

The paper is organized as follows. 
In Section~\ref{section: definitions} we recall the notions of $\beta$-expansions and of Büchi automata. 
In Section~\ref{section: rec def} and Section~\ref{section: def rec} we prove the equivalence 
between $\beta$-recognizable sets and $\beta$-definable sets 
(Theorems~\ref{the:recdef} and~\ref{thm: definissable implique reconnaissable}).
In Section~\ref{section: gdifs}, we prove the equivalence between $\beta$-self-similar sets, 
attractors of particular GDIFS and particular weakly $\beta$-recognizable sets 
(Theorems~\ref{thm:digraph automate}, \ref{thm: digraph implique self-similar} 
and~\ref{thm:self-similar implique digraph}).

\section{B{\"u}chi automata and $\beta$-expansions}
\label{section: definitions}
In this section, we describe the conventions that we will use to
represent real numbers or $n$-tuples of reals numbers using a real
base $\beta$. We recap some well-known results on B{\"u}chi automata.

Let $A$ be a finite alphabet. We denote the set of nonempty finite words over $A$ by $A^+$ 
and the set of infinite words by $A^\omega$.

\subsection{$\beta$-expansions}
Let $\beta>1$ be a real number and let $C \subset \mathbb{Z}$ be an
alphabet.  For a real number $x$, any infinite word $u = u_k \cdots
u_1 u_0 \star u_{-1} u_{-2} \cdots$ over $C \cup\{\star\}$ such that
\[
\val_\beta(u):= \sum_{-\infty < i \leq k} u_i \, \beta^i = x
\]  
is a {\em $\beta$-representation} of $x$.  In general, the
representation is not unique.  For $x \geq 0$, among all such
$\beta$-representations of $x$, we distinguish the {\em
  $\beta$-expansion} $d_\beta(x)=x_k\cdots x_1 x_0\star
x_{-1}x_{-2}\cdots$ which is an infinite word over the canonical
alphabet $A_\beta = \{0,\ldots,\lceil\beta\rceil -1\}$ containing one symbol $\star$ and obtained by a
greedy algorithm, i.e., we fix the minimal $k \geq 0$ such that
\[
x=\sum_{-\infty < i \leq k} x_i\, \beta^i \text{ and, for all }i \leq
k,\ x_i \in A_\beta
\] 
and, for all $\ell \le k$,
\[
\sum_{-\infty < i \leq \ell} x_i\, \beta^i < \beta^{\ell+1}.
\]
The choice of $k$ implies that $x_k \neq 0$ if and only if $x \geq 1$
and that reals in $[0,1)$ have a $\beta$-expansion of the form $0\star u$ with $u\in A_\beta^\omega$. 
In particular $d_\beta(0) = 0 \star 0^\omega$.  

To deal with negative real numbers and negative digits, we make use of
the notation $\overline{a}$ to denote the integer $-a$ for all $a\in\mathbb{Z}$.  
This notation is extended to a morphism acting on finite or infinite words over 
$\mathbb{Z}$: $\overline{u\, v}=\overline{u}\, \overline{v}$, $\overline{u\star v}
=\overline{u}\star\overline{v}$ and $\overline{\overline{u}}=u$.
Let $\bar{A}_\beta = \{\bar{0},\bar{1},\dots,\overline{\lceil \beta \rceil -1}\}$.  
For $x< 0$, the {\em $\beta$-expansion of $x$} is
defined as the word $d_\beta(x)=u \star v$ over $\bar{A}_\beta \cup \{\star\}$ 
such that $\overline{d_\beta(-x)} = u \star v$.  
With the identification $\bar{0} = 0$, we let $\tilde{A}_\beta$ denote the
alphabet $A_\beta \cup \bar{A}_\beta$.

\begin{definition}    
A real number $x$ is a {\em $\beta$-integer}, if $d_\beta(x)$ is of the kind $u \star 0^\omega$.  
The set of $\beta$-integers is denoted by $\mathbb{Z}_\beta$.
\end{definition}

Let $\beta >1$. The so-called {\em R\'enyi expansion} of $1$ plays a special role. 
It is the greatest word $w\in A_\beta^\omega$ (in the lexicographic order) 
not ending in $0^\omega$ such that $0\star w$ is a $\beta$-representation of $1$. 
It is denoted by $d_\beta^{\,*}(1)$.
The next result characterizes the $\beta$-expansions of real numbers
in $[0,1)$ in terms of their shifted words.  
Recall that the {\em shift map} is the function $\sigma : A^\omega \to A^\omega$ 
defined by $\sigma(u_1u_2u_3\cdots) = u_2u_3u_4\cdots$.

\begin{theorem}[Parry~\cite{Parry60}] \label{thm: parry}
Let $\beta >1$ be a real number. An infinite word $u$ is such that $0
\star u$ is the $\beta$-expansion of a real number in $[0,1)$ if and
only if for all $k \geq 0$, $\sigma^k(u) < d_\beta^{\, *}(1)$.
\end{theorem}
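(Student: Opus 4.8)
The plan is to read both conditions through the \emph{$\beta$-transformation} $T_\beta\colon[0,1)\to[0,1)$, $T_\beta(x)=\beta x-\lfloor\beta x\rfloor$, so that the lexicographic hypothesis on \emph{all} suffixes of $u$ becomes a statement about the whole forward orbit of $x$. First I would unwind the greedy algorithm defining $d_\beta$ to check that if $d_\beta(x)=0\star u$ with $u=u_1u_2\cdots\in A_\beta^\omega$, then $u_1=\lfloor\beta x\rfloor$ and, inductively, $d_\beta(T_\beta^k(x))=0\star\sigma^k(u)$ with $T_\beta^k(x)\in[0,1)$. Thus each suffix $\sigma^k(u)$ is itself the $\beta$-expansion of a point of $[0,1)$, and the two implications become uniform in $k$.

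The one computation I would isolate is the monotonicity of $x\mapsto d_\beta(x)$ on $[0,1)$ for the lexicographic order. Writing $d_\beta(x)=0\star u$ and $d_\beta(y)=0\star v$, I claim $x<y$ forces $u<v$. Arguing by contraposition, if $u\ge v$ and $u\neq v$, let $j$ be the first index where they differ, so $u_i=v_i$ for $i<j$ and $u_j\ge v_j+1$; the greedy condition in the definition of $d_\beta$ gives $\sum_{i>j}v_i\beta^{-i}<\beta^{-j}$, whence $y<\sum_{i<j}v_i\beta^{-i}+(v_j+1)\beta^{-j}\le x$, contradicting $x<y$. The same estimate also characterizes admissibility: a sequence $s\in A_\beta^\omega$ is the suffix of $d_\beta(\val_\beta(0\star s))$, with value in $[0,1)$, if and only if $\val_\beta(0\star\sigma^\ell(s))<1$ for every $\ell\ge0$, which is the condition I will verify in the converse direction.

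For the direct implication I would combine these steps with the description of $d_\beta^{\,*}(1)$ as the supremum, for the lexicographic order, of the suffixes $d_\beta(y)$ with $y\in[0,1)$ (equivalently, the limit of $d_\beta(y)$ as $y\to1^-$), which follows from its definition as the greatest representation of $1$ not ending in $0^\omega$. Since $T_\beta^k(x)\in[0,1)$, monotonicity yields $\sigma^k(u)\preceq d_\beta^{\,*}(1)$; equality is impossible, for $d_\beta^{\,*}(1)$ represents $1$ whereas $\sigma^k(u)$ represents $T_\beta^k(x)<1$. Hence $\sigma^k(u)<d_\beta^{\,*}(1)$ for every $k\ge0$.

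For the converse, given $u$ with $\sigma^k(u)<d_\beta^{\,*}(1)$ for all $k$, I would set $x=\val_\beta(0\star u)$ and prove the admissibility inequalities $\val_\beta(0\star\sigma^\ell(u))<1$ for every $\ell$; by the second step this makes $0\star u$ the $\beta$-expansion of $x\in[0,1)$ and finishes the proof. The main obstacle is precisely this passage from the lexicographic inequalities to the strict valuation inequalities, because $\val_\beta$ is \emph{not} monotone for the lexicographic order in general: a single suffix lying below $d_\beta^{\,*}(1)$ may have value exceeding $1$ (e.g.\ a suffix starting with $0$ followed by large digits), so one cannot argue from $u<d_\beta^{\,*}(1)$ alone. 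The full family of inequalities $\sigma^k(u)<d_\beta^{\,*}(1)$ must be used together with the defining properties of the \emph{quasi-greedy} expansion $d_\beta^{\,*}(1)$: that it is shift-maximal, $\sigma^k(d_\beta^{\,*}(1))\preceq d_\beta^{\,*}(1)$, and that each of its own suffixes represents a number at most $1$. It is also here that the choice of $d_\beta^{\,*}(1)$ rather than the greedy expansion of $1$ is essential and explains the strict inequality: when the greedy expansion of $1$ is finite, the supremum of admissible suffixes is not attained, and only the non-terminating representation $d_\beta^{\,*}(1)$ keeps every comparison strict. Handling this boundary case carefully, rather than the generic estimate, is the delicate part of the argument.
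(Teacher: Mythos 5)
The paper itself offers no proof of this statement---it is quoted directly from Parry's 1960 article---so your attempt can only be judged on its own terms. Your first two steps are sound: the identification $d_\beta(T_\beta^k(x))=0\star\sigma^k(u)$ via the $\beta$-transformation is the right way to make the problem uniform in $k$, and your contrapositive proof of lexicographic monotonicity of $x\mapsto d_\beta(x)$ on $[0,1)$ is complete and correct. The problem is that the proposal stops exactly where the content of Parry's theorem begins. For the converse you explicitly acknowledge that the passage from the inequalities $\sigma^k(u)\prec d_\beta^{\,*}(1)$ to the valuation bounds $\val_\beta(0\star\sigma^\ell(u))<1$ is ``the delicate part'', you list the ingredients it should use, and then you never carry it out. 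The missing argument is a genuine induction/telescoping estimate: writing $w=d_\beta^{\,*}(1)$ and locating the first index $j$ with $u_j\le w_j-1$, one gets $\val_\beta(0\star u)\le\bigl(\sum_{i\le j}w_i\beta^{-i}-\beta^{-j}\bigr)+\beta^{-j}\val_\beta(0\star\sigma^{j}(u))$; iterating this on successive tails (each of which again satisfies the hypothesis) produces a telescoping series bounded by $1$, proving $\val_\beta(0\star\sigma^\ell(u))\le 1$ for every $\ell$; the strict inequality then needs a separate step: if equality held, the same estimate would force $\val_\beta(0\star\sigma^{j}(w))=0$, i.e.\ $\sigma^{j}(w)=0^\omega$, contradicting that $d_\beta^{\,*}(1)$ does not end in $0^\omega$. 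None of this appears in the proposal, and it is the heart of the theorem.

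There is a second, smaller gap in the direct implication. You deduce $\sigma^k(u)\preceq d_\beta^{\,*}(1)$ from the claim that $d_\beta^{\,*}(1)$ is the lexicographic supremum of the words $d_\beta(y)$, $y\in[0,1)$, asserting that this ``follows from its definition as the greatest representation of $1$ not ending in $0^\omega$''. It does not follow formally: your monotonicity lemma only compares greedy words with each other, and $d_\beta^{\,*}(1)$ is not the greedy word of any point of $[0,1)$. The actual proof that $d_\beta(y)\prec w$ for all $y<1$ needs the tail bound $\val_\beta(0\star\sigma^{j}(w))\le 1$ for every $j$ (without it, a greedy word exceeding $w$ at its first disagreement yields no contradiction), and this bound in turn requires identifying the paper's $d_\beta^{\,*}(1)$ with the output of the quasi-greedy algorithm---a short induction, but one that must actually be done. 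In summary: correct skeleton and correct diagnosis of the difficulties, but the two load-bearing arguments (the telescoping estimate in the converse, and the tail-value properties of $d_\beta^{\,*}(1)$ used in both directions) are missing, so this is a gap rather than a proof.
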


\begin{example}\label{ex:1}
For all $\beta>1$ one has $d_\beta(1)=1\star 0^\omega$.  
The definition of $d_\beta^*(1)$ indeed depends on $\beta$.  
Let $\varphi=(1+\sqrt{5})/2$ be the golden mean. 
We have $d_\varphi^*(1)=(10)^\omega$.  
Theorem~\ref{thm: parry} shows that the $\varphi$-expansions of real numbers in $[0,1)$ are of the
form $0 \star u$, where $u$ is an infinite word over $\{0,1\}$ not containing $11$ as a factor.
\end{example}

\subsection{B{\"u}chi automata and $\omega$-regular languages}

\begin{definition}
Given an alphabet $A$, a {\em B\"uchi automaton} over $A$ is a
labeled directed graph given by a $5$-tuple
$\mathcal{A}=(Q,A,E,I,T)$ where $Q$ is the finite set of states,
$E \subseteq Q \times A \times Q$ is the set of transitions, $I$
is the set of initial states and $T$ is the set of terminal states.
An infinite word $u \in A^\omega$ is said to be {\em accepted} by
$\mathcal{A}$ if there exists an infinite path in $\mathcal{A}$
whose label is $u$ starting in an initial state and visiting infinitely often the set $T$.  
The language $L \subseteq A^\omega$ accepted by $\mathcal{A}$ 
is the set of all infinite words over $A$ that are accepted by $\mathcal{A}$; 
such a language is said to be {\em $\omega$-regular}.  
A B\"uchi automaton is said to be {\em deterministic} if $I$ is a singleton and 
for all states $q \in Q$ and all letters $a \in A$, 
there is at most one transition in $\{q\} \times \{a\} \times Q$. 
As usual when representing an automaton, initial states are depicted with an incoming arrow and terminal states with an outgoing arrow.
\end{definition}

If $u=u_0\cdots u_{\ell-1},v=v_0\cdots v_{\ell-1}$ are two finite
words of the same length $\ell$ (resp. $u=u_0u_1\cdots ,v=v_0v_1\cdots $ are
two infinite words), then one can define the {\em direct product} $u\times v$ of
$u$ and $v$ where the $i$th symbol is $(u\times v)_i=(u_i,v_i)$ for all $i<\ell$ (resp. for
all $i\ge 0$). Let $L\subseteq A^\omega$ and $M\subseteq B^\omega$ be
two languages of infinite words, we define the {\em direct product} of $L$
and $M$ as $L\times M=\{u\times v \mid u\in L, v\in M\}$. The
morphisms of projections are denoted by $\pi_1$ and $\pi_2$, satisfy
$\pi_1(u\times v)=u$, $\pi_2(u\times v)=v$ and are naturally extended
to the product of languages. These notions of product and projection
can obviously be extended to the direct product of $n$ words.

\begin{facts}[\cite{PerrinPin04}] \label{fact: buchi stable} 
We collect some well-known result on $\omega$-regular languages.
\begin{enumerate}
  \item The class of $\omega$-regular languages is closed under
    complementation, finite union, finite intersection, morphic image and inverse image under a morphism.
	
  \item If $L$ and $M$ are $\omega$-regular languages over the
    alphabets $A$ and $B$ respectively, then $L \times M$ is an $\omega$-regular language over $A \times B$.
	
  \item If $L$ is an $\omega$-regular language over an alphabet $A
    \times B$, then the projections $\pi_1(L)$ and $\pi_2(L)$ are $\omega$-regular languages.
	
  \item The class of languages accepted by deterministic B\"uchi
    automaton is strictly included in the class of $\omega$-regular languages.
\end{enumerate}
\end{facts}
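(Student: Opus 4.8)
The plan is to handle the four items by direct automaton constructions, isolating complementation as the single hard step. Finite union (item~(1)) is immediate: the disjoint union of two B\"uchi automata recognising $L$ and $M$, with both sets of initial and terminal states retained, accepts an infinite word exactly when one of the two does. For finite intersection, and for the direct product $L\times M$ of item~(2), the naive synchronous product on $Q_1\times Q_2$ fails, since B\"uchi acceptance asks each component to visit its terminal set infinitely often and these visits need not be synchronised. I would repair this with the standard three-valued flag, working on $Q_1\times Q_2\times\{1,2\}$: the flag records which component we are currently waiting on, switching from $1$ to $2$ when the first automaton enters $T_1$ and from $2$ back to $1$ when the second enters $T_2$, and the terminal states are exactly those where the switch back to $1$ occurs. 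A run then visits the new terminal set infinitely often iff both components visit their own terminal sets infinitely often; reading the alphabet as $A\times B$ instead of a common alphabet yields item~(2).

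The morphism operations in item~(1) and the projections of item~(3) I would obtain by relabelling transitions. For the inverse image under a morphism $h$, replace each transition reading $a$ by a chain of fresh states spelling $h(a)$; for the morphic image, relabel each transition by its image under $h$ (saturating $\varepsilon$-transitions in the erasing case). The two projections $\pi_1,\pi_2$ are the morphic images under the letter-to-letter maps $(a,b)\mapsto a$ and $(a,b)\mapsto b$, so item~(3) is a special case. For the strict inclusion of item~(4) I would exhibit the classical separating language $L_0\subseteq\{a,b\}^\omega$ of words containing only finitely many occurrences of $a$: it is $\omega$-regular, since a nondeterministic automaton can guess the position of the last $a$ and then check that only $b$'s follow, yet no deterministic B\"uchi automaton accepts it. The latter follows from the usual interleaving argument: starting from $b^\omega\in L_0$, whose unique run visits $T$ infinitely often, one inserts an $a$ after each such visit to build the word $b^{n_1}ab^{n_2}ab^{n_3}a\cdots$, which has infinitely many $a$'s but whose run still visits $T$ infinitely often, forcing acceptance and contradicting $L_0\notni$ this word.

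The hard part is complementation in item~(1); it is B\"uchi's theorem and the only item that is not a short construction. I would follow B\"uchi's Ramsey-type congruence argument rather than determinization: for $L$ recognised by $\mathcal{A}$, define on $A^+$ the finite congruence identifying two words with the same transition profile, that is, which pairs of states are linked by a run and whether such a run meets a terminal state. By Ramsey's theorem every infinite word admits a factorization into an initial class followed by an infinite repetition of a single idempotent class, and membership of such an ultimately periodic class-word in $L$ depends only on the pair of classes; the complement of $L$ is then a finite union of languages $U\,V^\omega$ with $U,V$ regular, hence $\omega$-regular. Safra's determinization is the alternative, but the congruence argument is cleaner to state. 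Once complementation is available one could also recover intersection as $\overline{\bar L\cup\bar M}$, but the direct flag construction above is preferable, so I would keep complementation as the single substantial step.
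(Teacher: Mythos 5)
The paper gives no proof of these facts at all: they are stated as known results and attributed to Perrin and Pin \cite{PerrinPin04}, so there is no internal argument to compare yours against. What you propose is essentially the standard textbook development from that reference: disjoint union for finite union, the two-valued flag product on $Q_1\times Q_2\times\{1,2\}$ for intersection and for $L\times M$, projections as letter-to-letter morphic images, B\"uchi's Ramsey-type congruence argument for complementation, and the language of words with finitely many $a$'s as the separator in item (4) --- the latter being exactly the example the paper itself gives right after the statement (Figure~\ref{finitely-many-a}). All of these are sound as sketched, and isolating complementation as the single substantial step is the right structural judgement.

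The one genuine slip is in the treatment of morphisms in item (1): you have interchanged the two constructions. Replacing an $a$-transition by a chain of fresh states spelling $h(a)$ computes the \emph{image} $h(L)$, not the inverse image; and ``relabel each transition by its image under $h$'' is again the image construction (it forces the same chains as soon as $|h(a)|\ge 2$). For the inverse image of $L\subseteq B^\omega$ recognized by $\mathcal{B}$ one must go the other way: keep the state set of $\mathcal{B}$ and add an $a$-transition from $q$ to $q'$ whenever some path of $\mathcal{B}$ from $q$ to $q'$ is labelled by $h(a)$. This collapsing of paths into single transitions destroys the B\"uchi acceptance information, so one must additionally record --- say by doubling the states with a flag --- whether the chosen connecting path passes through a terminal state, and declare the flagged states terminal; without this flag, a word whose accepting run in $\mathcal{B}$ meets $T$ only in the interiors of the blocks $h(a)$ would wrongly be rejected. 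Finally, for the image under an erasing morphism note that $h(u)$ may be a finite word when $u$ is infinite, so one either restricts to non-erasing morphisms or intersects the result with $B^\omega$; your $\varepsilon$-saturation repairs the automaton but not this semantic point. These are local, standard repairs, and none of them affects the items (2)--(4) that the paper actually uses, since the projections there are letter-to-letter.
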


\begin{example}
Let us illustrate Fact~\ref{fact: buchi stable}~(4).  
The non-deterministic B\"uchi automaton in Figure~\ref{finitely-many-a} accepts the language over $\{a,b\}$
of the words containing finitely many $a$'s.  
However it can easily be shown that no deterministic B\"uchi automaton accepts the same language.
\begin{figure}[h!tbp]
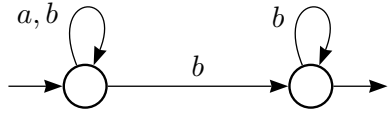

\centering
\begin{VCPicture}{(0,0)(3,2)}
\ChgStateLineWidth{.5}
\ChgEdgeLineWidth{.5}
\ChgEdgeLabelScale{.6}

\SmallState\State[]{(0,0.5)}{A}
\SmallState\State[]{(3,0.5)}{B}
\Initial{A}
\Final[e]{B}
\EdgeL[.5]{A}{B}{b}
\CLoopN{A}{a,b}
\CLoopN{B}{b}
\end{VCPicture}
\caption{A non-deterministic B\"uchi automaton}
\label{finitely-many-a}
\end{figure}
\end{example}

\subsection{Recognizable and definable sets of reals} \label{subsection: recognizable and definable}
There is no obstacle to consider a multi-dimensional framework. 
Let $n\ge 1$ be an integer.  Let us define the following three alphabets of $n$-tuples:
\[
{\bf A_\beta}=\underbrace{A_\beta \times \cdots \times A_\beta}_{n
  \text{ times}}, \quad {\bf \bar{A}_\beta}=\underbrace{\bar{A}_\beta
  \times \cdots \times \bar{A}_\beta}_{n \text{ times}} \quad \text{ and }\quad {\bf
  \tilde{A}_\beta}=\underbrace{\tilde{A}_\beta
  \times \cdots \times \tilde{A}_\beta }_{n \text{ times}}.
\] 
We also set 
\[
\boldsymbol{\star}=(\underbrace{\star,\ldots , \star}_{n  \text{ times}}) \ \text{ and } \ {\bf 0}
=(\underbrace{0,\dots,0}_{n  \text{ times}}).
\]

\begin{definition}
Let $\mathbf{x}=(x_1,\dots,x_n)$ be a point in $\mathbb{R}^n$.  
We define the $\beta$-expansion of $\mathbf{x}$ as being the word
$d_\beta(\mathbf{x})$ over the alphabet $ {\bf \tilde{A}_\beta}\cup
\{\boldsymbol{\star}\}$ that belongs to $0^* d_\beta(x_1) \times 0^*
d_\beta(x_2)\times \dots\times 0^*d_\beta(x_n)$ and that does not start
with ${\bf 0}$ except if $|x_i|<1$ for all $i$, 
in which case we consider the word starting with $\mathbf{0\star}$. 
Otherwise stated, the $n$ $\beta$-expansions are
synchronized by possibly using some leading zeroes in such a way that
all the $\star$ symbols occur at the same position in every $\beta$-expansions.
\end{definition}

\begin{example}
Take $\varphi=(1+\sqrt{5})/2$. Consider $\mathbf{x}=(x_1,x_2)=((1+\sqrt{5})/4,2+\sqrt{5})$. We have
\[
d_\varphi(\mathbf{x})=
\begin{array}{ccccccccccc}
    0&0&0& \star &1&0&0&1&0&0&\cdots\\
    1&0&1& \star &0&1&0&1&0&1&\cdots\\
\end{array}
\]
Where the first $\varphi$-expansion is padded with some leading zeroes.
Consider an example where all the components have moduli less than one. 
With $\mathbf{y}=(x_1,x_2)=((1+\sqrt{5})/4,-1/2)$, we get 
\[
d_\varphi(\mathbf{y})=
\begin{array}{ccccccccc}
    0& \star &1&0&0&1&0&0&\cdots\\    
    0& \star &0&\overline{1}&0&0&\overline{1}&0&\cdots\\
\end{array}
\]
where the two $\varphi$-expansions start with one symbol $0$ followed by $\star$.
\end{example}

\begin{definition}
A set $X \subseteq \mathbb{R}^n$ is {\em $\beta$-recognizable} 
if there is some B\"uchi automaton over the alphabet ${\bf \tilde{A}_\beta}\cup \{\boldsymbol{\star}\}$ 
accepting the language $d_\beta(X)$.  
We let $\Rec_\beta(\mathbb{R}^n)$ denote the set of $\beta$-recognizable sets of $\mathbb{R}^n$.
\end{definition}

\begin{fact}\label{fact: 3 ways to recognize}
Let $X \subseteq \mathbb{R}^n$. The following three assertions are equivalent:
\begin{enumerate}
  \item $X$ is $\beta$-recognizable;
  \item there exists some B\"uchi automaton over the alphabet 
	${\bf \tilde{A}_\beta}\cup \{\boldsymbol{\star}\}$ accepting the language ${\bf 0}^* d_\beta(X)$;
  \item there exists a B\"uchi automaton over the alphabet 
	${\bf \tilde{A}_\beta}\cup \{\boldsymbol{\star}\}$ accepting a language of the form 
	$\{{\bf 0}^{m(\mathbf{x})} d_\beta(\mathbf{x}) \mid \mathbf{x} \in X\}$ 
	for some map  $m:\mathbf{x}\to \mathbb{N}$.
\end{enumerate}
\end{fact}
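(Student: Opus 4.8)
The plan is to prove the three equivalences by establishing a cycle of implications, namely $(1) \Rightarrow (2) \Rightarrow (3) \Rightarrow (1)$, and the crux is to see that inserting or deleting leading blocks of $\mathbf{0}$'s does not change $\omega$-regularity. The implication $(2) \Rightarrow (3)$ is immediate: the language $\mathbf{0}^* d_\beta(X)$ is itself of the required form, taking $m(\mathbf{x})$ to range over all of $\mathbb{N}$ (so that the set $\{\mathbf{0}^{m(\mathbf{x})} d_\beta(\mathbf{x}) \mid \mathbf{x} \in X\}$, allowing $m$ to be multivalued, equals $\mathbf{0}^* d_\beta(X)$); more carefully, one simply observes that the class of languages in (3) contains $\mathbf{0}^* d_\beta(X)$ as the special case where one permits every admissible number of leading zeroes.

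For $(1) \Rightarrow (2)$, suppose a B\"uchi automaton $\mathcal{A}$ accepts $d_\beta(X)$. I would build an automaton for $\mathbf{0}^* d_\beta(X)$ by prepending a new initial state $q_0$ with a self-loop labeled $\mathbf{0}$ and an $\mathbf{0}$-transition into the original initial state(s) of $\mathcal{A}$; one must also route directly into those states of $\mathcal{A}$ reachable after reading a block of $\mathbf{0}$'s, so that words already starting with $\mathbf{0}$ in $d_\beta(X)$ are not doubly counted. Equivalently, and more cleanly, I would note that $\mathbf{0}^* d_\beta(X) = \mathbf{0}^* \cdot d_\beta(X)$ is the concatenation of the regular language $\mathbf{0}^*$ (of finite words) with the $\omega$-regular language $d_\beta(X)$, and such a left-concatenation of a regular language with an $\omega$-regular language is again $\omega$-regular; this is a standard closure property subsumed by Facts~\ref{fact: buchi stable}.

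The reverse implication $(3) \Rightarrow (1)$ is the step I expect to carry the real content, since here we must \emph{strip} leading zeroes rather than add them, and recover precisely the canonical expansions $d_\beta(\mathbf{x})$. Suppose a B\"uchi automaton accepts $L = \{\mathbf{0}^{m(\mathbf{x})} d_\beta(\mathbf{x}) \mid \mathbf{x} \in X\}$. The key observation is that every word in $L$ is of the form $\mathbf{0}^k w$ where $w = d_\beta(\mathbf{x})$ is a genuine $\beta$-expansion, and by definition $d_\beta(\mathbf{x})$ does not begin with $\mathbf{0}$ unless $\mathbf{x}$ lies in the open unit box, in which case it begins with exactly $\mathbf{0}\boldsymbol{\star}$. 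Thus the canonical expansions form the sublanguage $d_\beta(X) = L \setminus \mathbf{0}\, L'$ obtained by removing words that carry a strictly removable leading $\mathbf{0}$; concretely, $d_\beta(\mathbf{x})$ is recovered from $\mathbf{0}^k d_\beta(\mathbf{x})$ by deleting every leading $\mathbf{0}$ that precedes either a nonzero tuple or the pattern $\mathbf{0}\boldsymbol{\star}$. I would realize this as a Boolean combination: intersect $L$ with the $\omega$-regular language of words whose first symbol is not $\mathbf{0}$, together with the $\omega$-regular language of words beginning with $\mathbf{0}\boldsymbol{\star}$, and take the union; since each of these constraints is $\omega$-regular and the class is closed under intersection, union and complement by Facts~\ref{fact: buchi stable}~(1), the resulting language is $d_\beta(X)$ and is $\omega$-regular, giving $X \in \Rec_\beta(\mathbb{R}^n)$.

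The main obstacle, and the point requiring the most care, is the bookkeeping in $(3) \Rightarrow (1)$: one must verify that the canonical form of $d_\beta(\mathbf{x})$ is \emph{uniquely} identifiable among all its zero-padded variants $\mathbf{0}^k d_\beta(\mathbf{x})$ by a local (hence $\omega$-regular) condition, and in particular that the two cases---the point lies in $[-1,1]^n$ versus some coordinate has modulus at least one---are distinguished correctly by examining only the initial symbols, so that deleting leading $\mathbf{0}$'s yields exactly $d_\beta(\mathbf{x})$ and never over-strips the legitimate leading zero of a point in the open unit box. Once this syntactic characterization of the canonical padding is isolated, the closure properties of Facts~\ref{fact: buchi stable} deliver the conclusion without further difficulty.
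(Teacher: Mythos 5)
Your step $(1)\Rightarrow(2)$ is fine (concatenation of a regular language of finite words with an $\omega$-regular language is $\omega$-regular), but the other two links of your cycle are broken, and the failure in $(3)\Rightarrow(1)$ is fatal. There you propose to recover $d_\beta(X)$ from $L=\{\mathbf{0}^{m(\mathbf{x})}d_\beta(\mathbf{x})\mid \mathbf{x}\in X\}$ by a \emph{Boolean combination}, intersecting $L$ with the language of words in canonical form (first symbol not $\mathbf{0}$, or prefix $\mathbf{0}\boldsymbol{\star}$). But Boolean operations can only select a sublanguage of $L$: they never produce a word absent from $L$. If $m(\mathbf{x})>0$ then $d_\beta(\mathbf{x})\notin L$, so your construction yields exactly $\{d_\beta(\mathbf{x})\mid \mathbf{x}\in X,\ m(\mathbf{x})=0\}$, silently discarding every point whose chosen padding is positive; it equals $d_\beta(X)$ only in the degenerate case $m\equiv 0$. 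The missing ingredient is a genuine transformation of the language: closure of $\omega$-regular languages under left quotient by a regular language. Concretely, in an automaton for $L$, declare initial every state reachable from an initial state by a finite path labeled entirely by $\mathbf{0}$'s; the resulting automaton accepts $(\mathbf{0}^*)^{-1}L$, which contains all partially stripped words $\mathbf{0}^j d_\beta(\mathbf{x})$, $0\le j\le m(\mathbf{x})$, and \emph{then} intersect with the canonical-form filter. Note also that the filter must read ``the first symbol is a digit tuple different from $\mathbf{0}$, or the word begins with $\mathbf{0}\boldsymbol{\star}$'': after stripping, words of the form $\boldsymbol{\star}v$ arise from points of the open unit box (whose expansions begin with $\mathbf{0}\boldsymbol{\star}$), and your clause ``first symbol is not $\mathbf{0}$'' would wrongly admit them.

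Separately, your $(2)\Rightarrow(3)$ rests on a misreading: $m$ is a map, so a language of the form (3) contains \emph{exactly one} zero-padded word per point of $X$, whereas $\mathbf{0}^* d_\beta(X)$ contains infinitely many distinct ones (no word is fixed by prepending $\mathbf{0}$, since every expansion contains $\boldsymbol{\star}$); a ``multivalued $m$'' is not allowed. The implication that is actually immediate is $(1)\Rightarrow(3)$, with $m\equiv 0$. So the argument should be reorganized as $(1)\Rightarrow(2)$, $(1)\Rightarrow(3)$, and $(2)\Rightarrow(1)$, $(3)\Rightarrow(1)$, where the last two both use the quotient-then-filter construction above. (The paper states this Fact without proof, treating it as folklore; the initial-state saturation along $\mathbf{0}$-paths is the content your write-up is missing.)
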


A {\em Parry number} is a real number $\beta$ for which the R\'enyi expansion of $1$ is finite 
({\em simple Parry number}) or ultimately periodic ({\em non-simple Parry number}).  
The next result is a direct consequence of Theorem~\ref{thm: parry}.
The following fact provides a justification to restrict ourselves to Parry numbers.
It is indeed desirable that the set of all $\beta$-expansions is accepted by some (deterministic) Büchi automaton.

\begin{fact}\cite{Bertrand89,Parry60} \label{fact: parry recognizable}
If $\beta>1$ is a Parry number, then $d_\beta([0,1)^n)$ is accepted by a deterministic B\"uchi automaton.
\end{fact}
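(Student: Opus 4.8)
The plan is to reduce the $n$-dimensional statement to the one-dimensional one and then, in dimension one, to build a finite deterministic Büchi automaton directly from the ultimately periodic word $d_\beta^{*}(1)$.

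\emph{Reduction to $n=1$.} Since every coordinate of a point $\mathbf x\in[0,1)^n$ has modulus $<1$, the definition of $d_\beta(\mathbf x)$ forces each component $\beta$-expansion to be of the form $0\star u^{(i)}$ with $u^{(i)}\in A_\beta^\omega$, so that the $n$ expansions are already synchronized on a single leading symbol $\mathbf 0$ followed by $\boldsymbol\star$. Writing $L=\{u\in A_\beta^\omega : 0\star u=d_\beta(x)\text{ for some }x\in[0,1)\}$, one obtains
\[
d_\beta([0,1)^n)=\mathbf 0\,\boldsymbol\star\cdot\bigl(\underbrace{L\times\cdots\times L}_{n}\bigr).
\]
By Facts~\ref{fact: buchi stable}(2) a direct product of $\omega$-regular languages is $\omega$-regular, and prepending the fixed finite prefix $\mathbf 0\boldsymbol\star$ preserves recognizability; moreover the product of deterministic automata stays deterministic (run one deterministic copy on each coordinate and accept when all $n$ runs are accepting, a generalized Büchi condition that converts to an ordinary deterministic Büchi condition). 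Hence it suffices to show that $L$ itself is recognized by a deterministic Büchi automaton.

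\emph{The one-dimensional core.} By Theorem~\ref{thm: parry}, $L=\{u\in A_\beta^\omega : \sigma^k(u)<d_\beta^{*}(1)\text{ for all }k\ge 0\}$, that is, the set of sequences all of whose suffixes are lexicographically strictly smaller than $w:=d_\beta^{*}(1)$. The hypothesis that $\beta$ is a Parry number enters exactly here: it guarantees that $w$ is ultimately periodic. I would then construct the canonical deterministic automaton of the $\beta$-shift, whose state records the length of the longest suffix of the prefix read so far that is a prefix of $w$: a digit exceeding the expected letter of $w$ leads to a dead state, a strictly smaller digit triggers a border transition, and a matching digit increments the state. Ultimate periodicity of $w$ makes the chain of matching transitions eventually periodic, so only finitely many states survive.

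\emph{Handling the strict inequality — the main obstacle.} The construction above recognizes the \emph{closed} $\beta$-shift $\{u:\sigma^k(u)\le w\text{ for all }k\}$ as a safety condition, whereas $L$ demands the \emph{strict} inequality. The two languages differ precisely by the sequences that are ultimately equal to $w$ (these are the representations of the right endpoint $1$), and excluding them is where a genuine Büchi acceptance condition is needed rather than a plain safety automaton. I would mark the transitions that read a digit strictly smaller than the expected letter of $w$, and declare a run accepting if and only if it uses such a transition infinitely often: a sequence satisfies $\sigma^k(u)<w$ for all $k$ exactly when it drops strictly below $w$ infinitely often, equivalently when it is not ultimately equal to $w$. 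Converting this transition-based condition into an ordinary state-based deterministic Büchi condition (by duplicating each target state according to the type of its incoming transition) yields the desired automaton for $L$. The delicate point is that, in the purely periodic (simple Parry) case, the matching cycle must be kept distinct from the strictly-below resets, so that $w$ itself and its shifts are correctly rejected; I expect verifying this bookkeeping to be the only real subtlety, everything else following from Theorem~\ref{thm: parry} and the closure properties recorded in Facts~\ref{fact: buchi stable}.
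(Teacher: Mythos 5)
Your proposal is correct, and it takes essentially the route the paper itself points to: the paper offers no proof of this fact, citing \cite{Bertrand89,Parry60} and calling it a direct consequence of Theorem~\ref{thm: parry}, and your construction is exactly the classical one behind those citations --- the deterministic automaton of the $\beta$-shift whose states track the longest matched prefix of the ultimately periodic word $d_\beta^{*}(1)$, with the strictness in Parry's characterization enforced by requiring the ``strictly below'' transitions to occur infinitely often, and the $n$-dimensional case obtained by a deterministic product construction. Your identification of the strict-versus-weak inequality as the one genuine subtlety (and its resolution via a transition-based B\"uchi condition, which maximality of $d_\beta^{*}(1)$ makes sound) is exactly right.
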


\begin{example}
The following B\"uchi automaton in Figure~\ref{figure:aut-Fibo} accepts $d_\varphi([0,1))$ where $\varphi=(1+\sqrt{5})/2$.
\begin{figure}[h!tbp]
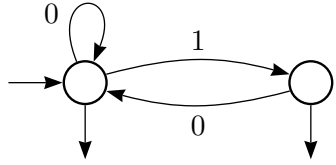

\centering
\begin{VCPicture}{(0,0)(3,2)}
\ChgStateLineWidth{.5}
\ChgEdgeLineWidth{.5}
\ChgEdgeLabelScale{.6}
\SmallState\State[]{(0,1)}{A}
\SmallState\State[]{(3,1)}{B}
\Initial{A}
\Final[s]{A}
\Final[s]{B}
\ArcL[.5]{A}{B}{1}
\ArcL[.5]{B}{A}{0}
\CLoopN{A}{0}
\end{VCPicture}
\caption{A deterministic B\"uchi automaton accepting $d_\varphi([0,1))$.}
\label{figure:aut-Fibo}
\end{figure}
\end{example}

The next definition introduces a predicate describing that some fixed
digit occurs in a specific position of the $\beta$-expansion of a
given real number (assuming that $\beta$-expansions are extended to
the left with infinitely many zeroes).
\begin{definition}
    Let $x$ be a real number and let $d_\beta(x)=x_k\cdots x_0\star
    x_{-1} \cdots$ be its $\beta$-expansion.  For each $a\in
    \tilde{A_\beta}$, the binary predicate $X_{\beta,a}(x,y)$ over
    $\mathbb{R}^2$ holds true whenever $y$ is an integral power of $\beta$ and either
    \begin{itemize}
      \item $|x|<y$ and $a=0$, or
      \item $|x|\ge y$ and in the $\beta$-expansion of $x$, the coefficient corresponding to $y$ is $a$, i.e., 
		$y=\beta^i$ for some $i\le k$ and $x_i=a$.
    \end{itemize}
    Let $X_\beta$ denote the (finite) collection of predicates $\{X_{\beta,a}\mid a\in \tilde{A_\beta}\}$.  
    A set $X\subseteq\mathbb{R}^n$ is {\em $\beta$-definable} if it can be
    defined by a first order formula in the structure
    \[
	\langle \mathbb{R}, 1, \le, +, X_\beta \rangle.
    \] 
We let $\Def_\beta(\mathbb{R}^n)$ denote the set of $\beta$-definable sets of $\mathbb{R}^n$.
\end{definition}

\begin{remark}
The property of being a power of $\beta$ is definable in the structure 
$\langle \mathbb{R}, 1, \le, X_\beta\rangle$ by the formula
\[
	x \text{ is a power of }\beta \Leftrightarrow (\exists y)\left(X_{\beta,1}(x,y) \wedge x=y\right).
\]
We can also define the properties of being a positive or negative power 
of $\beta$ by adding $x > 1$ or $x < 1$ respectively. Let $b$ be a power of $\beta$. 
One can easily define the next (or the previous) power of $\beta$ as follows:
\[
b'=\beta b\Leftrightarrow 
(b'\text{ is a power of }\beta) \wedge (b'>b) \wedge (\forall
c)((c\text{ is a power of }\beta \wedge c>b)\implies c\ge b').
\] 
Consequently, any constant (positive or negative) power of $\beta$ is definable in the structure.
\end{remark}

\begin{remark}
The two structures $\langle \mathbb{R}, 1, \le, +,X_\beta\rangle$ and 
$\langle \mathbb{R}, \mathbb{Z}_\beta, \le, +, X_\beta\rangle$ are equivalent. 
Indeed, the set $\mathbb{Z}_\beta$ can be defined in $\langle \mathbb{R}, 1, \le, X_\beta\rangle$ by the formula
\[
	z \in \mathbb{Z}_\beta 
	\Leftrightarrow 
	(\forall y)
	\big[ (y  \text{ is a negative power of } \beta)
	\implies
	X_{\beta,0}(z,y)
	\big].
\]
Conversely, $1$ can be defined in $\langle \mathbb{R}, \mathbb{Z}_\beta, \le,+\rangle$ by the formula
\[
	z = 1 \Leftrightarrow (z \in \mathbb{Z}_\beta) \wedge \big[\big((x \in \mathbb{Z}_\beta) \wedge
	(x > 0) \big) \implies (x \geq z) \big]
\]
where $0$ is defined by $(\forall x)(x+0 = x)$.
%1) PAS BESOIN DE DEFINIR 0 POUR X_beta CAR C'EST UN NOUVEAU PREDICAT QU'ON DEFNIT 
%ET DONC ON Y MET CE QU'ON VEUT (Y COMPRIS Z D'AILLEURS).
%2) ON NE LE FAIT PAS VIA LE BETA-DEVELOPPEMENT CAR ON N'A PAS 1 (PRECISEMENT) ET
%DONC ON NE PEUT PAS DEFINIR UNE PUISSANCE POSITIVE OU NEGATIVE DE BETA.
\end{remark}

\section{If the base is a Parry number, recognizability implies definability}
\label{section: rec def}

The main result of this section is the following one. Compared with the next section, 
note that the only assumption is that $\beta$ is a Parry number.

\begin{theorem}\label{the:recdef}
    Let $\beta>1$ be a Parry number.  If $X\subseteq\mathbb{R}^n$ is
    $\beta$-recognizable, then $X$ is $\beta$-definable.
\end{theorem}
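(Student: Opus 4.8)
The plan is to translate the acceptance condition of a Büchi automaton into a first-order formula over $\langle \mathbb{R},1,\le,+,X_\beta\rangle$. By Fact~\ref{fact: 3 ways to recognize} I fix a Büchi automaton $\mathcal{A}=(Q,\mathbf{\tilde A}_\beta\cup\{\boldsymbol\star\},E,I,T)$ recognizing $\mathbf 0^*d_\beta(X)$, so that $\mathbf x\in X$ holds exactly when some $\mathbf 0^j d_\beta(\mathbf x)$ is accepted. I then build a formula $\varphi(\mathbf x)$ expressing ``$d_\beta(\mathbf x)\in L(\mathcal A)$''. Most of the raw material is already at hand: the positions of the expansion are the powers of $\beta$, the digit of the $\ell$-th coordinate at a position $y=\beta^i$ is read off by the predicates $X_{\beta,a}$, the properties ``$y$ is a (positive or negative) power of $\beta$'' and the successor function $y\mapsto\beta y$ on powers are definable (as shown in the remarks following the definition of $X_\beta$), and the symbol $\boldsymbol\star$ occurs at the fixed boundary between $\beta^0$ and $\beta^{-1}$.

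The heart of the argument is to assert the existence of an accepting run. I encode a run by its state indicators: for each state $q\in Q$ I quantify over a real $z_q$ whose digit at a power $y$ of $\beta$ equals $1$ precisely when the run occupies $q$ immediately before reading position $y$. Then $\varphi(\mathbf x)$ asserts the existence of $(z_q)_{q\in Q}$ such that: (i) there is a power $y_0$ above which every coordinate of $\mathbf x$ has digit $\mathbf 0$ and the indicators select an initial state at $y_0$ (this realizes the $\mathbf 0^*$ prefix); (ii) \emph{functionality}: at each power $y\le y_0$ exactly one $z_q$ carries digit $1$; (iii) \emph{local consistency}: for every power $y\le y_0$, its predecessor $y'=y/\beta$, and the tuple $\mathbf a$ of digits of $\mathbf x$ at $y$, the states marked at $y$ and $y'$ are joined by a transition $(\cdot,\mathbf a,\cdot)\in E$, written as a finite disjunction over $E$ of conjunctions of $X_{\beta,a}$-atoms on $\mathbf x$ and on the $z_q$; (iv) the $\boldsymbol\star$-transition is composed into the single consistency instance straddling $\beta^0$ and $\beta^{-1}$; (v) \emph{Büchi condition}: $\forall y\,\exists y'\,\bigl(y'<y$ a power of $\beta$ and the state marked at $y'$ lies in $T\bigr)$, i.e.\ a terminal state recurs at arbitrarily small powers. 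Since $Q$ and $E$ are finite, (i)--(v) are all first order over the given structure, using only $\le$, the digit predicates, and the definable successor on powers.

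The one genuine obstacle is that each $z_q$ ranges over reals, so the digit patterns returned by the predicates $X_{\beta,a}$ are exactly the canonical greedy $\beta$-expansions, whereas the indicator of $\rho^{-1}(q)$ for an arbitrary run need not be a legal expansion; for instance, no two consecutive $1$'s may appear when $\beta=\varphi$. This is precisely where the Parry hypothesis enters. As $d_\beta^*(1)$ is ultimately periodic its runs of $0$'s have bounded length, so I fix an integer $p$ larger than that maximal length and recode the run by spreading each logical step over $p$ consecutive powers of $\beta$, placing the indicator bit in one slot and $0$'s in the remaining $p-1$. By Parry's Theorem~\ref{thm: parry}, every shift of the resulting sparse pattern is lexicographically below $d_\beta^*(1)$ (a shift beginning with $0$ is immediately smaller, and a shift beginning with the isolated $1$ meets a nonzero digit of $d_\beta^*(1)$ within its next $p-1$ positions), hence the pattern is a genuine $\beta$-expansion and the witnesses $z_q$ exist. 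The factor-$p$ bookkeeping between logical steps and actual powers remains first order, obtained by iterating the definable successor on powers a fixed number of times. Once this recoding lemma is in place, conditions (i)--(v) transcribe verbatim, $\varphi$ defines $X$, and $X$ is $\beta$-definable.
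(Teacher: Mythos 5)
Your overall strategy is the same as the paper's: encode a run by one indicator real per state, read those indicators through the predicates $X_{\beta,a}$, write the initial/transition/B\"uchi conditions as a first-order formula, and invoke the Parry hypothesis to make the indicator patterns genuine $\beta$-expansions. You also correctly isolated the one real obstacle (validity of the indicators). But your mechanism for overcoming it---dilating time by a factor $p$---does not survive formalization, and this is a genuine gap. After spreading, the state occupied at step $j$ is recorded at a power of the form $\beta^{-jp-s}$, while the digit of $\mathbf{x}$ consumed at step $j$ sits at the power $\beta^{-j}$. Your local consistency condition (iii) must therefore bind $\beta^{-j}$ to $\beta^{-jp}$, and this is \emph{not} ``iterating the definable successor a fixed number of times'': going from $\beta^{-j}$ to $\beta^{-jp}$ takes $(p-1)j$ successor steps, a number that grows with $j$. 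Worse, the relation $\{(\beta^{-j},\beta^{-jp}) \mid j\ge 0\}$ is provably undefinable in $\langle\mathbb{R},1,\le,+,X_\beta\rangle$: already for an integer base $b$, the set of pairs $(b^{-j},b^{-pj})$ is not $b$-recognizable (a B\"uchi automaton cannot compare the two unbounded blocks of zeros; pump the block preceding the first $1$), hence by Theorem~\ref{thm: definissable implique reconnaissable} it is not definable. The same obstruction kills the natural repair of introducing a ``$p$-dilated copy'' $\tilde{\mathbf{x}}$ of $\mathbf{x}$, since the dilation relation itself is non-recognizable. So conditions (iii)--(iv) cannot be transcribed: the dilation destroys the synchronization between the run and the input word, and first-order logic over this structure cannot restore it.

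The paper's proof avoids this by sparsifying at the automaton level rather than by dilating time: $\mathcal{A}$ is replaced by $\mathcal{A}^{(k+1)}$, consisting of $k+1$ copies of $\mathcal{A}$ traversed cyclically, with $k$ given by Lemma~\ref{lem:tech}. This automaton accepts the same language, and any single state of $\mathcal{A}^{(k+1)}$ can recur at most once every $k+1$ steps, so every indicator word automatically has its $1$'s separated by at least $k$ zeros \emph{while step $j$ still corresponds to position $j$} both for $d_\beta(x)$ and for all indicators; consistency then only ever relates a power of $\beta$ to its immediate successor, which is definable. A second, smaller slip in your argument: taking $p$ merely larger than the longest run of zeros of $d_\beta^*(1)$ is not quite enough. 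For $\beta=\varphi$ one has $d_\varphi^*(1)=(10)^\omega$, so the maximal run is $1$ and you would take $p=2$; but a run looping forever in one state (e.g.\ the state of Figure~\ref{figure:aut-Fibo} with the loop labeled $0$) then yields the indicator pattern $(10)^\omega$, which by Theorem~\ref{thm: parry} is \emph{not} a valid expansion tail---it represents $1$, and the strict inequality fails. The separation must strictly exceed the maximal run of zeros, as with the paper's choice $k=|u|+|v|$ where $d_\beta^*(1)=uv^\omega$.
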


The aim of the following technical lemma is to ensure the construction
of some valid $\beta$-expansions.

\begin{lemma}\label{lem:tech}
    Let $\beta>1$ be a Parry number. 
    There exists an integer $k$  such that for any infinite word
    $u\in\{0,1\}^\omega$ of the form $0^{n_1}10^{n_2}10^{n_3}\cdots$
    with $n_i\ge k$ for all $i\ge 1$, there exists some real $x$ such that $d_\beta(x)=0\star u$.
\end{lemma}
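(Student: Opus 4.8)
The plan is to reduce the existence of a valid $\beta$-expansion entirely to Parry's lexicographic criterion (Theorem~\ref{thm: parry}). Writing $t := d_\beta^{\,*}(1) = t_1 t_2 \cdots$, it suffices to produce an integer $k$ so that every word $u = 0^{n_1}10^{n_2}10^{n_3}\cdots$ with all $n_i \ge k$ satisfies $\sigma^j(u) < t$ for every $j \ge 0$; then $x := \val_\beta(0\star u)$ is a real in $[0,1)$ with $d_\beta(x) = 0\star u$, as required. Two features of $t$ will drive the argument. First, $t$ begins with a nonzero digit ($t_1 = \lceil\beta\rceil - 1 \ge 1$), since $\beta > 1$. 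Second, because $\beta$ is a Parry number, $t$ is eventually periodic, and since by definition $d_\beta^{\,*}(1)$ does not end in $0^\omega$, its eventual period must contain a nonzero digit. Consequently the lengths of the maximal blocks of consecutive $0$'s occurring in $t$ are bounded; let $Z$ denote this (finite) maximum and set $k := Z+1$.

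First I would record the following. Every suffix $\sigma^j(u)$ is again of the form $0^m 1 0^{n} 1 \cdots$ with $n \ge k$ (the tail blocks being unchanged), where $m \ge 0$; in particular, since the $n_i$ are at least $k \ge 1$, the word $u$ contains no factor $11$, so any suffix beginning with $1$ has the shape $10^{n}1\cdots$ with $n \ge k$. The verification of $\sigma^j(u) < t$ then splits into two cases. If $\sigma^j(u)$ begins with $0$ (the case $m \ge 1$), the first letters already differ, $0 < t_1$, and we are done. If $\sigma^j(u)$ begins with $1$, I compare with $t = t_1 t_2 \cdots$: when $t_1 \ge 2$ the first letters give $1 < t_1$ and we conclude immediately; when $t_1 = 1$ (which forces $1 < \beta \le 2$ and the alphabet $\{0,1\}$) both words start with $1$, so I must look further into the block $0^{n}$.

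The crux is this last situation. Let $r$ be the length of the run of $0$'s immediately following $t_1$ in $t$, so that $t_{2+r} \ne 0$; by definition of $Z$ we have $r \le Z$. Since $n \ge k = Z+1 > Z \ge r$, the positions $2, \dots, 2+r$ of $\sigma^j(u)$ all lie inside its zero-block and hence carry the digit $0$. Therefore $\sigma^j(u)$ and $t$ agree on positions $1, \dots, 1+r$ (a $1$ followed by $r$ zeros), while at position $2+r$ we have $0$ in $\sigma^j(u)$ against $t_{2+r} \ge 1$, so $\sigma^j(u) < t$. This settles all cases, and Theorem~\ref{thm: parry} yields the desired $x$. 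The main obstacle is precisely pinning down the bound $Z$ on the zero-runs of $d_\beta^{\,*}(1)$: this is where the Parry hypothesis enters essentially, since for a general $\beta > 1$ the word $d_\beta^{\,*}(1)$ need not be eventually periodic and its zero-runs could be unbounded, so no uniform $k$ would exist; the remaining comparisons are then routine lexicographic bookkeeping.
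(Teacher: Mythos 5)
Your proof is correct and takes essentially the same route as the paper's: both use the ultimate periodicity of $d_\beta^{\,*}(1)$ (this is where the Parry hypothesis enters) to bound its runs of zeros and then conclude via Parry's criterion (Theorem~\ref{thm: parry}). The paper's two-line proof simply sets $k=|u|+|v|$ for $d_\beta^{\,*}(1)=uv^\omega$ with $v$ containing a nonzero digit and leaves the lexicographic comparison implicit, which is precisely the case analysis you carry out in detail.
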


\begin{proof}
Since $\beta$ is a Parry number, there exist two finite words $u$ and $v$ such that $d_\beta^*(1)=uv^\omega$ 
where $v$ is nonempty and contains at least one non-zero digit. 
Taking $k=|u|+|v|$, the result follows from Theorem~\ref{thm: parry}.
\end{proof}

In the proof of Theorem~\ref{the:recdef}, we will make use of the following construction. 
We can replace one automaton by $M$ copies of this automaton in such a
way that any path cyclically visits these copies. 
The reason of this construction is to use a number of copies corresponding to the
constant $k$ obtained in the previous lemma.  Let $M\ge 1$ be an integer. 
If $\mathcal{A}=(Q,A,E,I,T)$ is a B\"uchi automaton, 
we define the automaton $\mathcal{A}^{(M)}=(Q',A,E',I',T')$ as follows.
Its set of states is made of $M$ distinct copies of the states of $\mathcal{A}$: 
If $Q=\{q_1,\ldots,q_t\}$ then $Q'=\cup_{n=1}^M\{q_{1,n},\ldots,q_{t,n}\}$. 
If $(q_i,a,q_j)$ belongs to $E$, then $(q_{i,n},a,q_{j,n+1})\in E'$ for all $n\in
\{1,\ldots,M-1\}$ and $(q_{i,M},a,q_{j,1})\in E'$. 
The set of initial states $I'$ is made of the sates $q_{i,1}$ such that $q_i\in I$. 
A state $q_{i,n}$, $1\le n\le M$, is terminal whenever $q_i\in T$.

\begin{example} 
Consider the automaton $\mathcal{A}$ depicted in Figure~\ref{figure:aut-Fibo}. 
We consider the automaton $\mathcal{A}^{(3)}$ depicted in Figure~\ref{figure:aut-Fibo3}. 
All states are terminal but for the sake of readability, outgoing arrows have been omitted. 
The three copies have been drawn consecutively from left to right.
\begin{figure}[h!tbp]
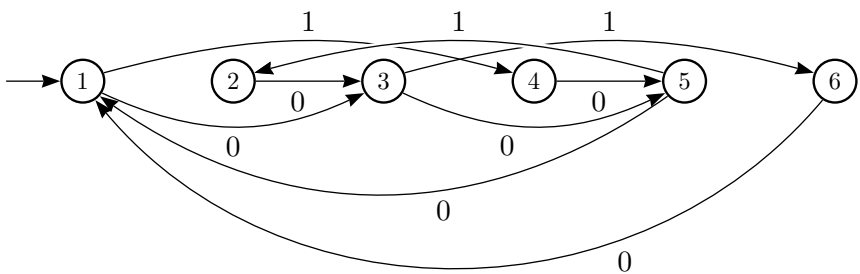

\centering
\begin{VCPicture}{(0,-1.7)(10,2)}
\ChgStateLineWidth{.5}
\ChgEdgeLineWidth{.5}
\ChgEdgeLabelScale{.6}
\ChgStateLabelScale{.5}

\SmallState\State[1]{(0,1)}{A}
\SmallState\State[2]{(2,1)}{B}
\SmallState\State[3]{(4,1)}{A2}
\SmallState\State[4]{(6,1)}{B2}
\SmallState\State[5]{(8,1)}{A3}
\SmallState\State[6]{(10,1)}{B3}
\Initial{A}

\ArcL[.5]{A}{B2}{1}
\EdgeR[.4]{B}{A2}{0}
\ArcL[.5]{A2}{B3}{1}
\EdgeR[.4]{B2}{A3}{0}
\EdgeBorder
\ArcR[.5]{A3}{B}{1}
\EdgeBorderOff
\VArcR[.5]{arcangle=-25}{A}{A2}{0}
\VArcR{arcangle=-25}{A2}{A3}{0}
\VArcL{arcangle=35}{A3}{A}{0}
\VArcL[.3]{arcangle=50}{B3}{A}{0}
\end{VCPicture}
\caption{An automaton of the kind $\mathcal{A}^{(3)}$.}
\label{figure:aut-Fibo3}
\end{figure}
\end{example}

\begin{lemma}
Let $\mathcal{A}$ be a B\"uchi automaton. 
The automaton $\mathcal{A}^{(M)}$ accepts the same $\omega$-regular language.
\end{lemma}

\begin{proof}
Clear from the definition of $\mathcal{A}^{(M)}$.
\end{proof}

For the proof of Theorem~\ref{the:recdef}, we follow essentially the
same lines as \cite{BRW98}. We code runs (i.e., infinite sequences of
states) in a B\"uchi automaton $\mathcal{A}$ using a finite number of
infinite words over $A_\beta\cup\{\star\}$. 
These infinite words in turn can be viewed as $\beta$-expansions of some real numbers. 
Hence, this coding permits to replace runs with real numbers. 
Successful computations in $\mathcal{A}$ can therefore be translated into a
formula in $\langle \mathbb{R},1,\le,+,X_\beta\rangle$. The main
technical difference when dealing with $\beta$-expansions instead of
classical base $b$ expansions is that we have to ensure that the
infinite words that are built are valid $\beta$-expansions (cf. Lemma~\ref{lem:tech}).

We will make often use of the following fact. 

\begin{remark}\label{rem20}
Multiplication (or division) by $\beta$ and thus by a constant power of $\beta$ is also definable:
\[
y=\beta x \Leftrightarrow 
(\forall b) 
% (b\text{ is a power of }\beta)\implies
% INUTILE CAR COMPRIS DANS LE PREDICAT X
[\bigwedge_{a\in \tilde{A}_\beta} (X_{\beta,a}(x,b)\implies X_{\beta,a}(y,\beta b))].
\]
This formula  expresses that we get the $\beta$-expansion of $y$ simply by shifting the one of $x$.
\end{remark}

\begin{proof}[Proof of Theorem~\ref{the:recdef}]
    To keep notations as simple as possible, without loss of
    generality, we may assume that $n=1$. By assumptions, there exists
    a B\"uchi automaton $\mathcal{A}$ with $t$ states accepting the
    language $d_\beta(X)$.  Moreover, we can also assume that it has a
    single initial state by possibly adding a new initial state with
     convenient transitions.

    Let $k$ be given by Lemma~\ref{lem:tech}. We replace $\mathcal{A}$
    with the automaton $\mathcal{A}^{(k+1)}=(Q,\tilde{A}_\beta\cup\{\star\},E,I,T)$
    having $s:=(k+1)t$ states. We can enumerate its states as
    $Q=\{q_1,\ldots,q_{s}\}$ and assume that $q_1$ is the initial
    state of $\mathcal{A}^{(k+1)}$. Each state $q_j$ will be coded by
    a unit column vector in $\{0,1\}^{s}$ where each component is equal to
    zero except for the $j$th component. This vector is denoted by $c(q_j)$.

    To any infinite run $r=(q_{i_j})_{j\ge 0}\in Q^\omega$ in
    $\mathcal{A}^{(k+1)}$, it corresponds an infinite sequence of
    vectors $\{0,1\}^{s}$ coding the sequence of visited states. If we
    concatenate these column vectors, for each row, we get $s$
    infinite words $w_1,\ldots,w_s$ over $\{0,1\}$ with the property
    that each symbol $1$ is followed by at least $k$ zeroes. Indeed,
    the shortest cycles in $\mathcal{A}^{(k+1)}$ have length at least
    $k+1$ and therefore, the same unit vector cannot be encountered
    more than once every $k+1$ times. As an example, consider the
    automaton $\mathcal{A}^{(3)}$ depicted in
    Figure~\ref{figure:aut-Fibo3}. One of the shortest loops is given
    by $1\stackrel{0}{\longrightarrow} 3\stackrel{0}{\longrightarrow}
    5\stackrel{0}{\longrightarrow} 1$ and it is derived from the loop
    with label $0$ in the original automaton depicted in
    Figure~\ref{figure:aut-Fibo}. The periodic run $(1,3,5)^\omega$
    corresponds to the $6$ infinite words 
\[
\begin{array}{rccccccccc}
w_1&=&1&0&0&1&0&0&\cdots\\
w_2&=&0&0&0&0&0&0&\cdots\\
w_3&=&0&1&0&0&1&0&\cdots\\
w_4&=&0&0&0&0&0&0&\cdots\\
w_5&=&0&0&1&0&0&1&\cdots\\
w_6&=&0&0&0&0&0&0&\cdots\\
\end{array}
\]

Thanks to Lemma~\ref{lem:tech}, the $s$ infinite
    words $w_1,\ldots,w_s$ over $\{0,1\}$ are such that $0\star w_1,\ldots, 0\star w_s$ are valid
    $\beta$-expansions. We define the map $$f:Q^\omega\to\mathbb{R}^s,\ 
    r=(q_{i_j})_{j\ge 0}\mapsto (\val_\beta(0\star w_1),\ldots, \val_\beta(0\star w_s)).$$
In our running example, %if $\beta=(1+\sqrt{5})/2$, then 
$f(r)=((1+\sqrt{5})/4,0,1/2,0,1/(1+\sqrt{5}),0)$.

    We now define a $(s+1)$-ary predicate  $R_{\mathcal{A}}(x,y_1,\ldots,y_s)$ over
    $\mathbb{R}^{s+1}$ which holds true if and only if there
    exists an execution of $\mathcal{A}^{(k+1)}$ with the infinite word
    $d_\beta(x)$ producing a successful run $r$ such that $f(r)=(y_1,\ldots,y_s)$.

    Recall that a successful run must start in the initial state, be
    compatible with the transitions of $\mathcal{A}^{(k+1)}$ and visit
    infinitely often the set of terminal states. Our final task is to
    show that $R_{\mathcal{A}}(x,y_1,\ldots,y_s)$ can be expressed in $\langle
    \mathbb{R},1,\le,+,X_\beta\rangle$. Therefore the set $X$ of reals
    accepted by $\mathcal{A}$ is $\beta$-definable:
\[
\{x\in\mathbb{R}\mid (\exists
y_1)\ldots(\exists y_s)(R_{\mathcal{A}}(x,y_1,\ldots,y_s))\}.
\]

    We first overcome some technicalities to make some kind of
    synchronization between the expansion of $x$ and the expansions of
    $y_1,\ldots,y_s$. Indeed, we have access through the predicates
    $X_{\beta,\cdot}(\cdot,b)$ to the digits corresponding to a
    same power $b$ of $\beta$ in the $\beta$-expansions of $x$ and
    $y_1,\ldots,y_s$. The sequence of visited states is encoded in
    $y_1,\ldots,y_s$ and we have to synchronize the state reached at
    step $n$ with the symbol read at the same step $n$ and the state
    at step $n+1$.  We assume in all what follows that $x$ is positive
    (the final formula should involve a disjunction of the two
    possible cases to take into account the sign of $x$). 
    In the next formula which is a part of the definition of 
    $R_{\mathcal{A}}(x,y_1,\ldots,y_s)$, we define $s$ new intermediate
    variables $z_1,\ldots,z_s$ that are roughly shifted versions of $y_1,\ldots,y_s$:
\begin{eqnarray*}
(\exists b)[(b\text{ is a power of }\beta)\wedge (b\le x< \beta b)\wedge 
 (((x\ge 1)\wedge \bigwedge_{i=1}^s (z_i=\beta b y_i))\vee ((x<1)\wedge \bigwedge_{i=1}^s (z_i=y_i)))].
\end{eqnarray*}
More precisely, if $x\ge 1$, then $d_\beta(x)=u\star v$ where $u$ is a
finite nonempty word not starting with $0$. We set
$d_\beta(z_i)=u_i\star v_i$ for all $i$. Then at least one of the
$u_i$'s is non reduced to $0$ and the longest $u_i$ as the same
length as $u$. If $x<1$, then $d_\beta(z_i)=0 \star v_i$ for all $i$
and one of the $v_i$ has $1$ as prefix. 

To give the reader an idea
about this construction, we continue our running example with
$(y_1,\ldots,y_s)=((1+\sqrt{5})/4,0,1/2,0,1/(1+\sqrt{5}),0)$ 
coding a sequence of states and taking $x=\varphi^3$. 
In such a case, we get $z_i=\varphi^4 y_i$, for all $i$,
and the following $\varphi$-expansions:
\[
\begin{array}{rcccccccccc}
  d_\varphi(x)&=&1&0&0&0&\star&0&0&\cdots\\    
d_\varphi(z_1)&=&1&0&0&1&\star&0&0&\cdots\\
d_\varphi(z_2)&=&0&0&0&0&\star&0&0&\cdots\\
d_\varphi(z_3)&=&0&1&0&0&\star&1&0&\cdots\\
d_\varphi(z_4)&=&0&0&0&0&\star&0&0&\cdots\\
d_\varphi(z_5)&=&0&0&1&0&\star&0&1&\cdots\\
d_\varphi(z_6)&=&0&0&0&0&\star&0&0&\cdots\\
\end{array}.
\]
The leftmost column contains the coding of the first state of the sequence of states but also the first symbol that is read. 
Generally, the $n$th column contains the coding of the $n$th state and the $n$th read symbol. 
Note that if $|x|<1$, then the leftmost column is $(0,\ldots,0)$
%$\mathbf{0}$ C'EST s FOIS ET PAS n FOIS.
followed directly with $(\star,\ldots,\star)$.
%$\boldsymbol{\star}$.

The predicate
    $R_{\mathcal{A}}(x,y_1,\ldots,y_s)$ is true if and only if
    \begin{enumerate}
      \item the run starts in the initial state: 
     \begin{multline*}
\hspace{1cm} (\exists b_0)\big[
%(b_0\text{ is a power of }\beta)\wedge 
X_{\beta,1}(z_1,b_0) \wedge X_{\beta,0}(z_2,b_0)\wedge \cdots \wedge X_{\beta,0}(z_s,b_0) \\
\wedge (\forall c)[c>b_0\implies
%(c\text{ is a power of }\beta)\wedge 
(X_{\beta,0}(z_1,c) \wedge\cdots \wedge X_{\beta,0}(z_s,c))]\big].
\end{multline*}
Note that $b_0$ is defined once and for all and corresponds to the largest power of $\beta$ 
occurring with a non-zero coefficients in one of the $\beta$-developments of $z_1,\ldots,z_s$.

\item The run follows the transitions: for all powers $c$ of $\beta$
  less or equal to $b_0$, being in the state coded by the digits
  corresponding to $c$ occurring in $d_\beta(z_1),\ldots,d_\beta(z_s)$
  and reading the digit corresponding to $c$ occurring in
  $d_\beta(x)$, the reached state must correspond to the digits
  corresponding to $c/\beta$ (we make use of Remark~\ref{rem20}) occurring in
  $d_\beta(z_1),\ldots,d_\beta(z_s)$. Such a finite transition relation can
  be coded by a formula:
\begin{multline*}
\hspace{1cm} (\forall c)
%((c \text{ is a power of }\beta)\wedge 
\big[ c\le b_0 \implies 
\big\{\bigwedge_{\substack{(q,d,q')\in E\\ c(q)=(a_1,\ldots,a_s)\\ c(q')=(a_1',\ldots,a_s')}}  
X_{\beta,a_1}(z_1,c)\wedge \cdots \wedge X_{\beta,a_s}(z_s,c) \wedge X_{\beta,d}(x,c)\\
\hspace{8cm} \implies X_{\beta,a_1'}(z_1,c/\beta)\wedge \cdots \wedge X_{\beta,a_s'}(z_s,c\beta)\big\}\big]. \\
\end{multline*}

\item Finally, the run must visit infinitely often a terminal state: for
  all powers $c$ of $\beta$ less or equal to $b_0$, there exists a
  power $d<c$ of $\beta$ such that the state coded by the digits
  corresponding to $d$ occurring in $d_\beta(z_1),\ldots,d_\beta(z_s)$
  is terminal: 
\begin{multline*}
(\forall c)\big [(c \text{ is a power of }\beta)\wedge (c\le b_0) \implies 
(\exists d)
%(d \text{ is a power of }\beta)\wedge
 (d< c)  \wedge 
\bigvee_{\substack{q\in T\\ c(q)=(a_1,\ldots,a_s)}} 
X_{\beta,a_1}(z_1,d)\wedge \cdots \wedge X_{\beta,a_s}(z_s,d)\big].\\
\end{multline*}
    \end{enumerate}
\end{proof}

\section{If the base is a Pisot number, definability implies recognizability}
\label{section: def rec}

A real algebraic integer greater than $1$ whose Galois conjugates have
modulus less than $1$ is called a {\em Pisot number}. 
Under this stronger assumption, we get the converse of Theorem~\ref{the:recdef}.

\begin{fact}\cite[Chapter 7]{Lothaire:2002}
If $\beta$ is a Pisot number, then it is a Parry number. But the converse does not hold. 
\end{fact}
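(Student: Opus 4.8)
The plan is to prove the two assertions separately; the substantive one is that every Pisot number is a Parry number, and for this I would track the orbit of $1$ under the greedy dynamics. Writing $d_\beta(1)=a_1a_2a_3\cdots$ and setting $r_0=1$, $a_n=\lfloor \beta r_{n-1}\rfloor$, $r_n=\beta r_{n-1}-a_n\in[0,1)$, an immediate induction gives the closed form $r_n=\beta^n-\sum_{i=1}^n a_i\beta^{n-i}$. Since $\beta$ is an algebraic integer, this exhibits every remainder $r_n$ as an element of $\mathbb{Z}[\beta]$, a free $\mathbb{Z}$-module with basis $1,\beta,\dots,\beta^{d-1}$ where $d=\deg\beta$; thus $r_n=\sum_{j=0}^{d-1}c_{n,j}\beta^j$ with integer coordinates $c_{n,j}$. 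The whole point is then to bound these coordinates uniformly in $n$.

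To do so I would apply the Galois conjugates $\beta=\beta^{(1)},\dots,\beta^{(d)}$. For each $k$ the conjugate value $r_n^{(k)}=\sum_{j}c_{n,j}(\beta^{(k)})^j=(\beta^{(k)})^n-\sum_{i=1}^n a_i(\beta^{(k)})^{n-i}$ is controlled: for $k=1$ we have $r_n^{(1)}=r_n\in[0,1]$, while for $k\ge 2$ the Pisot hypothesis $|\beta^{(k)}|<1$ together with the uniform bound on the digits $a_i$ yields, via a convergent geometric series, $|r_n^{(k)}|\le 1+\lceil\beta\rceil/(1-|\beta^{(k)}|)$, a bound independent of $n$. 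Since the conjugates are distinct (separability in characteristic $0$), the Vandermonde matrix $((\beta^{(k)})^j)_{k,j}$ is invertible, so the vector $(c_{n,j})_j$ is obtained from the uniformly bounded vector $(r_n^{(k)})_k$ by one fixed linear map; hence the integer coordinates $c_{n,j}$ range over a finite set and the sequence $(r_n)_{n\ge 0}$ takes only finitely many values. Because $r_{n+1}$ is a deterministic function of $r_n$, the orbit is eventually periodic, so the digit sequence $d_\beta(1)$, and consequently $d_\beta^*(1)$, is finite or ultimately periodic, which is precisely the statement that $\beta$ is a Parry number.

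For the converse it suffices to exhibit one Parry number that is not Pisot. I would take $\beta$ to be the real root greater than $1$ of $x^4-3x^3-3$. Running the greedy algorithm (or checking Parry's admissibility condition from Theorem~\ref{thm: parry}) shows $d_\beta(1)=3003$, so $\beta$ is a simple Parry number; yet $x^4-3x^3-3$ is irreducible over $\mathbb{Q}$ and its non-real conjugates have modulus slightly exceeding $1$, so $\beta$ is not Pisot. Alternatively one may argue non-constructively: the set of Pisot numbers is closed with empty interior, whereas the simple Parry numbers are dense in $(1,\infty)$, so the latter cannot be contained in the former.

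The main obstacle is the uniform boundedness step. Everything hinges on recognizing each remainder $r_n$ as an element of $\mathbb{Z}[\beta]$ and then using the \emph{strict} inequality $|\beta^{(k)}|<1$ at every conjugate to turn the a priori unbounded quantities $(\beta^{(k)})^n-\sum_i a_i(\beta^{(k)})^{n-i}$ into convergent geometric estimates. This is exactly where the Pisot hypothesis is indispensable, and where a mere Parry number, or even a Salem number with a conjugate on the unit circle, would fail to give the finiteness of the orbit.
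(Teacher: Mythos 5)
Your proof is correct, and it is essentially the standard argument: the paper itself offers no proof of this Fact (it only cites Lothaire, Chapter~7), and your Galois-conjugate/finite-orbit argument for ``Pisot $\Rightarrow$ Parry'' is precisely the classical proof (going back to Schmidt and Bertrand-Mathis) found in that reference. Your counterexample for the converse also checks out: $x^4-3x^3-3$ is irreducible (Eisenstein at $3$), its real root $\beta\approx 3.103$ satisfies $d_\beta(1)=3003$ so $\beta$ is a simple Parry number, while its complex conjugate pair has modulus $\approx 1.03>1$, so $\beta$ is not Pisot; the non-constructive density alternative is equally valid.
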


\begin{theorem}
\label{thm: definissable implique reconnaissable}
Let $\beta>1$ be a Pisot number. If $X \subseteq \mathbb{R}^n$ is $\beta$-definable, then $X$ is $\beta$-recognizable.
\end{theorem}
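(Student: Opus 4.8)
The plan is to prove that $\beta$-definable sets are $\beta$-recognizable by structural induction on the first order formula defining the set. Since a $\beta$-definable set $X \subseteq \mathbb{R}^n$ is defined by a formula in the structure $\langle \mathbb{R}, 1, \le, +, X_\beta\rangle$, it suffices to show that the atomic relations are $\beta$-recognizable and that the class of $\beta$-recognizable sets is closed under the logical operations. The closure part is essentially automatic from Facts~\ref{fact: buchi stable}: conjunction corresponds to intersection, disjunction to union, negation to complementation, and existential quantification $(\exists x_i)$ corresponds to projection $\pi$ onto the remaining coordinates (deleting the $i$th track of the alphabet ${\bf \tilde{A}_\beta}\cup\{\boldsymbol\star\}$). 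The only mild technical care needed here is that projection and complementation must be compatible with the synchronization conventions of $d_\beta$ (leading zeroes, the position of $\boldsymbol\star$), so I would phrase the induction on the padded languages ${\bf 0}^* d_\beta(X)$ using the equivalent formulations of Fact~\ref{fact: 3 ways to recognize}, which make these operations behave well on unsynchronized representations.

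The heart of the proof is therefore to verify that each atomic predicate defines a $\beta$-recognizable set: the graph of addition $\{(x,y,z) : x+y=z\}$, the order relation $\{(x,y): x\le y\}$, the constant $1$, and the base predicates $X_{\beta,a}$. The order relation and the constants are straightforward to recognize by a Büchi automaton comparing $\beta$-expansions digitwise, and each $X_{\beta,a}(x,y)$ is recognizable because reading the synchronized expansions of $x$ and $y$ an automaton can check that $y=\beta^i$ is a power of $\beta$ (its expansion is a single $1$ followed by $0^\omega$) and that the coefficient of $x$ at that position equals $a$, with the special case $|x|<y \Rightarrow a=0$ handled by inspecting whether the position lies above the most significant nonzero digit of $x$. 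The crucial and hardest atomic predicate is addition. The standard strategy is to build a Büchi automaton that reads the three $\beta$-expansions synchronously and simulates digitwise addition while carrying a bounded amount of information. This is precisely where the Pisot hypothesis is indispensable: for a general Parry base the carries propagating during addition can be unbounded, but when $\beta$ is a Pisot number the normalization of a $\beta$-representation into its $\beta$-expansion is computable by a finite automaton, because the relevant carries can be expressed in terms of the conjugates of $\beta$, which all have modulus less than $1$, forcing them to remain in a finite set.

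The main obstacle, as just indicated, is establishing recognizability of addition, which reduces to the $\beta$-recognizability of the normalization relation $\{(u,x) : \val_\beta(u)=x,\ u \text{ over a bounded digit alphabet}\}$ and of the set $\{(x,y,z): x+y=z\}$. I would handle this by invoking the well-known result of Frougny (and Frougny--Solomyak) that for a Pisot base $\beta$ the normalization map and hence addition are realized by finite $\beta$-automata, adapting it from the one-sided integer/fractional setting to the two-sided representations with a $\star$ marker used here. Concretely I would (i) show the sum $x+y$ has a $\beta$-representation over the digit alphabet $\{0,\dots,2(\lceil\beta\rceil-1)\}$ obtained by componentwise addition, recognizable by a transducer synchronized on $\star$; then (ii) compose this with the finite-state normalization automaton guaranteed by the Pisot property to recover the canonical expansion $d_\beta(x+y)$; and finally (iii) conclude via Facts~\ref{fact: buchi stable}(1)--(3) that the resulting relation is $\omega$-regular. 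Once addition is secured, the structural induction closes immediately, yielding that every $\beta$-definable set is $\beta$-recognizable.
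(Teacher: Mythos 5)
Your proposal is correct and follows essentially the same route as the paper: a structural induction where closure under Boolean operations and quantification comes from the closure properties of $\omega$-regular languages (Facts~\ref{fact: buchi stable} and~\ref{fact: 3 ways to recognize}), the atomic predicates $\le$ and $X_{\beta,a}$ are handled by explicit digit-comparing B\"uchi automata, and addition is recognized by summing digits over the doubled alphabet $\{0,\dots,2(\lceil\beta\rceil-1)\}$ and then normalizing via Frougny's Pisot-base transducer. The only part you invoke rather than carry out --- adapting Frougny's one-sided normalization to the two-sided representations with the $\star$ marker --- is exactly what the paper does in Proposition~\ref{proposition: normalization} and Lemma~\ref{fact: transducer conserve la regularite}, so your plan matches the paper's proof in both structure and key ingredients.
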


To prove this result, we use the classical method consisting in proving the recognizability of any formula by induction, 
i.e., we prove that sets defined by atomic formulae are recognizable 
and that adding connectors and quantifiers does not alter recognizability. 
We first need a few results. 

\begin{lemma}
\label{lemma: R^n recognizable}
If $\beta > 1$ is a Parry number, then $\mathbb{R}^n$ is $\beta$-recognizable.
\end{lemma}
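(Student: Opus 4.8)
The plan is to reduce the statement to the one-dimensional case $n=1$ by purely automata-theoretic closure properties, and then to build an explicit B\"uchi automaton recognizing the set of all valid $\beta$-expansions of real numbers, exploiting that the set of admissible sequences (the $\beta$-shift) is recognized by a finite automaton when $\beta$ is a Parry number.

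For the reduction, I would work with the language $\mathbf{0}^* d_\beta(\mathbb{R}^n)$, which by Fact~\ref{fact: 3 ways to recognize} is $\omega$-regular if and only if $\mathbb{R}^n$ is $\beta$-recognizable. Let $\pi_i$ denote the projection morphism sending $\boldsymbol{\star}$ to $\star$ and an $n$-tuple of digits to its $i$th component. The key identity is
\[
\mathbf{0}^* d_\beta(\mathbb{R}^n) = \bigcap_{i=1}^n \pi_i^{-1}\big(0^* d_\beta(\mathbb{R})\big),
\]
whose verification is routine: a word carries a single synchronized $\boldsymbol{\star}$ exactly when each of its projections carries a single $\star$ at the same position, and removing the maximal common prefix of $\mathbf{0}$-columns recovers the defining product form of $d_\beta(\mathbf{x})$. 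Granting this identity, Fact~\ref{fact: buchi stable}(1) (closure under inverse morphic image and finite intersection) reduces everything to showing that $0^* d_\beta(\mathbb{R})$ is $\omega$-regular, that is, that $\mathbb{R}$ is $\beta$-recognizable.

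For $n=1$ I would split according to the sign. A nonnegative real is written over $A_\beta\cup\{\star\}$ while a negative one is the bar-image of a positive one, so $0^* d_\beta(\mathbb{R}) = 0^* d_\beta(\mathbb{R}_{\ge 0}) \cup \overline{0^* d_\beta(\mathbb{R}_{\ge 0})}$; since $\omega$-regular languages are closed under morphic image and finite union (Fact~\ref{fact: buchi stable}(1)), it suffices to treat $\mathbb{R}_{\ge 0}$. By the greedy condition in the definition of $d_\beta$ combined with Theorem~\ref{thm: parry}, a word $w\star u$ over $A_\beta\cup\{\star\}$ lies in $0^* d_\beta(\mathbb{R}_{\ge 0})$ precisely when the infinite sequence obtained by deleting $\star$ is admissible, i.e. all of its suffixes are lexicographically smaller than $d_\beta^{\,*}(1)$. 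Because $\beta$ is a Parry number, $d_\beta^{\,*}(1)$ is ultimately periodic, so the set of admissible sequences is recognized by a finite deterministic automaton $\mathcal{B}$ (this is the mechanism behind Fact~\ref{fact: parry recognizable}). I would then take two copies of $\mathcal{B}$, an integer-part copy and a fractional-part copy, add from each state of the integer copy a $\star$-labelled transition to the corresponding state of the fractional copy, declare all states of the fractional copy terminal, and use a fresh initial state in the integer part from which only digits may be read, so that no accepted word begins with $\star$. Crossing the $\star$-transition preserves the current state, hence the admissibility constraint threaded into the tail, so this automaton accepts exactly the words $w\star u$ with $wu$ admissible; the B\"uchi condition holds because, after the unique $\star$, every visited state is terminal.

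The main obstacle is precisely this single step of carrying the admissibility condition of the $\beta$-shift across the radix-point marker $\star$ while keeping the state set finite, and it is exactly here that the Parry hypothesis is indispensable, since it guarantees the sofic (finitely recognizable) nature of the set of admissible sequences. Once $0^* d_\beta(\mathbb{R}_{\ge 0})$ is shown to be $\omega$-regular, the sign splitting and the reduction to $n=1$ are immediate from the closure properties collected in Fact~\ref{fact: buchi stable}.
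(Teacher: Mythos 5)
Your reduction to $n=1$ (via $\mathbf{0}^* d_\beta(\mathbb{R}^n) = \bigcap_{i=1}^n \pi_i^{-1}\bigl(0^* d_\beta(\mathbb{R})\bigr)$ and closure under inverse morphic image and intersection), your sign splitting via the bar morphism, and your lexicographic characterization of $0^* d_\beta(\mathbb{R}_{\ge 0})$ are all correct, and your two-copies-linked-by-$\star$ construction is the same core idea as the paper's (which instead works directly in dimension $n$ and handles signs by a union of $2^n$ relabelled automata). However, there is a genuine gap in your acceptance condition: you declare \emph{all} states of the fractional copy terminal and claim the automaton then ``accepts exactly the words $w\star u$ with $wu$ admissible.'' This cannot be right, because the set of admissible sequences (all shifts \emph{strictly} below $d_\beta^{\,*}(1)$, by Theorem~\ref{thm: parry}) is never a topologically closed subset of $A_\beta^\omega$: every prefix $t_1\cdots t_k$ of $d_\beta^{\,*}(1)$ padded with $0^\omega$ is admissible (by shift-maximality of $d_\beta^{\,*}(1)$), so $d_\beta^{\,*}(1)$ is a limit of admissible words, yet it is not admissible since no word is strictly smaller than itself. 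A trim B\"uchi automaton whose states are all terminal accepts a closed language (the set of labels of infinite paths from its initial states), so your automaton necessarily accepts the closure --- the full $\beta$-shift --- rather than the admissible words. Concretely, for $\beta=\varphi$ your automaton accepts $0\star(10)^\omega$; but $\val_\varphi\bigl(0\star(10)^\omega\bigr)=1$ and $d_\varphi(1)=1\star 0^\omega$, so $0\star(10)^\omega$ is a non-greedy representation, and the accepted language is not of any of the three forms allowed by Fact~\ref{fact: 3 ways to recognize}. Hence the construction, as stated, does not witness $\beta$-recognizability of $\mathbb{R}_{\geq 0}$.

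The missing ingredient is the nontrivial B\"uchi condition of the admissibility automaton: for every Parry number, the deterministic automaton behind Fact~\ref{fact: parry recognizable} must reject the words that eventually follow $d_\beta^{\,*}(1)$ forever, and this ``infinitely often'' condition has to be carried over to the fractional copy. That is exactly what the paper's proof does: the terminal states of the two-copy automaton $\mathcal{A}_{\beta,\mathbf{z}}$ are the states of the fractional copy corresponding to the terminal states of $\mathcal{A}'$, not all of them; since the integer part $w$ is finite, this correctly enforces admissibility of the whole digit word $wu$. (The paper's Figure~\ref{figure:aut-Fibo} is loose on the same point --- with both states terminal it also accepts $(10)^\omega$ --- but the proof of Lemma~\ref{lemma: R^n recognizable} is careful about it.) With this one repair your argument goes through; note that you cannot instead fix the over-acceptance by normalizing representations afterwards, since the normalization transducer of Proposition~\ref{proposition: normalization} requires $\beta$ to be Pisot, whereas the present lemma assumes only that $\beta$ is Parry.
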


\begin{proof} 
Let $\mathcal{A}_{\beta,{\rm frac}} = (Q_{\rm frac}, {\bf A}_\beta\cup\{\boldsymbol{\star}\}, E_{\rm frac} , I_{\rm frac} , 
T_{\rm frac})$ denote a deterministic B\"uchi automaton 
accepting the language $d_\beta([0,1)^n)$ (see Fact~\ref{fact: parry recognizable}). 
Thus the language accepted by $\mathcal{A}_{\beta,{\rm frac}}$ is included 
in ${\bf 0} \boldsymbol{\star} {\bf A_\beta^\omega}$. 
Let $\mathcal{A}'$ be a deterministic B\"uchi automaton over the alphabet ${\bf A}_\beta$ such that 
$L(\mathcal{A}_{\beta,{\rm frac}}) = {\bf 0} \boldsymbol{\star} L(\mathcal{A}')$.
Starting from $\mathcal{A}'$, we will show how to build an automaton $\mathcal{A}_\beta$ 
that accepts $\mathbf{0}^*d_\beta(\mathbb{R}^n)$.
We then conclude by using Fact~\ref{fact: 3 ways to recognize}.

If $r\in \mathbb{R}$ we define $\sign(r)$ to be $+$ if $r\ge 0$ and $-$ else.
If ${\bf x}=(x_1,\ldots,x_n)$ is a point in $\mathbb{R}^n$, then $\sign({\bf x})=(\sign(x_1),\ldots,\sign(x_n))$.
Given an $n$-tuple ${\bf z}=(z_1,\ldots,z_n)$ whose components belong to $\{+,-\}$, 
we will build a B\"uchi automaton $\mathcal{A}_{\beta,{\bf z}}$ accepting 
$\mathbf{0}^*d_\beta(\{{\bf x} \in\mathbb{R}^n  \mid \sign({\bf x})={\bf z}\})$. 
Then the  B\"uchi automaton $\mathcal{A}_\beta$ accepting $\mathbf{0}^*d_\beta(\mathbb{R}^n)$ 
will be the (disjoint) union of these $2^n$ automata. 
%ON PEUT AVOIR DES COMPOSANTES CONNEXES DISJOINTES AVEC PLUSIEURS ETATS INTITIAUX.

We construct such an automaton $\mathcal{A}_{\beta,{\bf z}}$ by considering two copies of $\mathcal{A}'$, 
one for the $\beta$-integer part and one for the $\beta$-fractional part of the representations. 
In all labels of transitions of both copies of  $\mathcal{A}'$, 
we replace the $i$-th component  by its ``opposite value" if $\sign(z_i)=-$ and we leave it unchanged otherwise.

W.l.o.g. we can pick the automaton $\mathcal{A}'$ so that its initial state has a loop labeled by ${\bf 0}$.
The unique initial state of $\mathcal{A}_{\beta,{\bf z}}$ is a new additional state $i$
and, for each transition $(p,\mathbf{a},q)$ with $ \mathbf{a} \in {\bf \tilde{A}_\beta}$ 
and  $p,q$ states of the $\beta$-integer part copy of $\mathcal{A}'$ with $p$ initial,
there is a transition $(i,\mathbf{a},q)$ in $\mathcal{A}_{\beta,{\bf z}}$. 
We add a loop on the initial state $i$ with label ${\mathbf 0}$.

The terminal states are the terminal states of the $\beta$-fractional part copy. 
We complete $\mathcal{A}_{\beta,{\bf z}}$ by adding, for each state $q$ of $\mathcal{A}'$, 
a transition from $(q,{\rm int})$ to $(q,{\rm frac})$ labeled by $\boldsymbol{\star}$, 
where $(q,{\rm int})$ (resp. $(q,{\rm frac})$) is the state of $\mathcal{A}_{\beta,{\bf z}}$ 
that corresponds to $q$ in the $\beta$-integer part copy (resp. $\beta$-fractional part copy).
\end{proof}

\begin{remark}
We illustrate the proof of the previous lemma. The automaton $\mathcal{A}_{\varphi,+}$ is depicted in Figure~\ref{figure:a+}.
\begin{figure}[h!tbp]
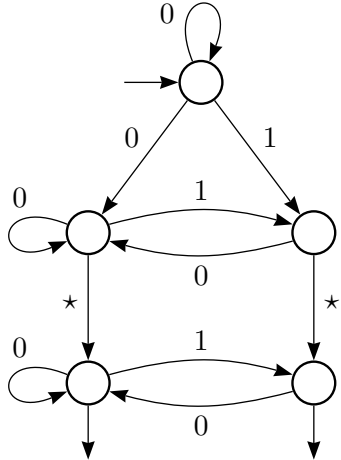

\centering
\begin{VCPicture}{(0,-2)(3,4)}
\ChgStateLineWidth{.5}
\ChgEdgeLineWidth{.5}
\ChgEdgeLabelScale{.6}
\SmallState\State[]{(1.5,3)}{I}
\SmallState\State[]{(0,1)}{A}
\SmallState\State[]{(3,1)}{B}
\SmallState\State[]{(0,-1)}{C}
\SmallState\State[]{(3,-1)}{D}
\Initial{I}
\Final[s]{C}
\Final[s]{D}
\ArcL[.5]{A}{B}{1}
\ArcL[.5]{B}{A}{0}
\CLoopW{A}{0}
\ArcL[.5]{C}{D}{1}
\ArcL[.5]{D}{C}{0}
\CLoopW{C}{0}
\EdgeR{A}{C}{\star}
\EdgeL{B}{D}{\star}
\EdgeR{I}{A}{0}
\EdgeL{I}{B}{1}
\CLoopN{I}{0}
\end{VCPicture}
\caption{The automaton $\mathcal{A}_{\varphi,+}$.}
\label{figure:a+}
\end{figure}
\end{remark}

\begin{lemma}
\label{lemma: Z^n rec}
If $\beta > 1$ is a Parry number, then $\mathbb{Z}_\beta^n$ is $\beta$-recognizable.
\end{lemma}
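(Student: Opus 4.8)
The plan is to realize $\mathbb{Z}_\beta^n$ as an intersection: I would cut out the language of all valid $\beta$-expansions (already known to be $\omega$-regular) by the very simple constraint ``the $\beta$-fractional part is zero'', and then invoke closure of $\omega$-regular languages under finite intersection together with the recognizability of $\mathbb{R}^n$ just proved in Lemma~\ref{lemma: R^n recognizable}.

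First I would pin down the shape of the expansions involved. A point $\mathbf{x}=(x_1,\dots,x_n)$ belongs to $\mathbb{Z}_\beta^n$ if and only if each $x_i$ is a $\beta$-integer, that is, $d_\beta(x_i)=u_i\star 0^\omega$. After the synchronization by leading zeroes built into the definition of $d_\beta(\mathbf{x})$, all the $\boldsymbol{\star}$ symbols are aligned and every fractional coordinate is $0^\omega$; hence $d_\beta(\mathbf{x})$ has the form $\mathbf{w}\,\boldsymbol{\star}\,{\bf 0}^\omega$ with $\mathbf{w}\in{\bf \tilde{A}_\beta}^{+}$. Conversely, any word of $d_\beta(\mathbb{R}^n)$ of this form is the expansion of a tuple all of whose coordinates are $\beta$-integers. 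This yields the set equality
\[
d_\beta(\mathbb{Z}_\beta^n)=d_\beta(\mathbb{R}^n)\cap L_0,\qquad L_0:={\bf \tilde{A}_\beta}^{*}\,\boldsymbol{\star}\,{\bf 0}^\omega .
\]

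Next I would check the two ingredients of this intersection. On one hand, $d_\beta(\mathbb{R}^n)$ is $\omega$-regular by Lemma~\ref{lemma: R^n recognizable}. On the other hand, $L_0$ is plainly $\omega$-regular: a two-state Büchi automaton that loops on every letter of ${\bf \tilde{A}_\beta}$ in a first (non-terminal) state, reads a single $\boldsymbol{\star}$ into a second terminal state, and loops there on ${\bf 0}$, accepts exactly $L_0$. Closure under finite intersection (Fact~\ref{fact: buchi stable}(1)) then shows that $d_\beta(\mathbb{Z}_\beta^n)$ is $\omega$-regular, i.e. $\mathbb{Z}_\beta^n$ is $\beta$-recognizable.

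The only point requiring care — and where I expect a short justification rather than a one-liner — is the set equality, specifically the degenerate clause in the definition of $d_\beta$ that treats tuples all of whose coordinates have modulus $<1$ separately. Here this causes no trouble: the unique $\beta$-integer of modulus $<1$ is $0$, so the only such tuple in $\mathbb{Z}_\beta^n$ is $\mathbf{0}$, whose expansion ${\bf 0}\,\boldsymbol{\star}\,{\bf 0}^\omega$ indeed lies in $L_0$; and since the equality is asserted between sets of canonical (greedy) expansions, the intersection introduces no spurious non-greedy representations. As an alternative, purely constructive route I could instead reuse the automaton built in the proof of Lemma~\ref{lemma: R^n recognizable}, keeping its $\beta$-integer-part copy of $\mathcal{A}'$ and replacing the $\beta$-fractional-part copy by a single terminal state carrying only a ${\bf 0}$-loop, reached by $\boldsymbol{\star}$ from the appropriate states; the intersection argument is however shorter and sidesteps re-examining the sign bookkeeping.
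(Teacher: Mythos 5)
Your proposal is correct and is essentially the paper's own proof: the paper also obtains $\mathbb{Z}_\beta^n$ by intersecting the automaton recognizing $\mathbb{R}^n$ (Lemma~\ref{lemma: R^n recognizable}) with an automaton accepting ${\bf \tilde{A}_\beta^+}\,\boldsymbol{\star}\,{\bf 0}^\omega$, exactly your $L_0$ up to the immaterial difference between ${\bf \tilde{A}_\beta}^{*}$ and ${\bf \tilde{A}_\beta}^{+}$ (every valid expansion has at least one digit before $\boldsymbol{\star}$, so the intersection is unaffected). Your additional verification of the set equality and of the degenerate all-moduli-less-than-one case just makes explicit what the paper leaves implicit.
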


\begin{proof}
Indeed, the automaton recognizing $\mathbb{Z}_\beta^n$ is simply the intersection 
of the one recognizing $\mathbb{R}^n$ with the one accepting 
${\bf \tilde{A}_\beta^+} \boldsymbol{\star} {\bf 0}^\omega$.
\end{proof}

\begin{lemma}
If $\beta > 1$ is a Parry number, then the set $X_< = \{(x,y) \in \mathbb{R}^2 \mid x < y\}$ is $\beta$-recognizable.
\end{lemma}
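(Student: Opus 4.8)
The plan is to reduce $X_<$ to a finite union of sign-constrained pieces and to recognize each piece separately, using closure of $\omega$-regular languages under finite union and intersection (Facts~\ref{fact: buchi stable}) together with Fact~\ref{fact: 3 ways to recognize}. Writing $X_< = \bigcup_{\mathbf{z}\in\{+,-\}^2}(X_<\cap S_{\mathbf{z}})$, where $S_{\mathbf{z}}=\{(x,y)\mid \sign(x,y)=\mathbf{z}\}$, the four cases split as follows. On $S_{(-,+)}$ one has $x<0\le y$, hence $x<y$ always, so $X_<\cap S_{(-,+)}=S_{(-,+)}$, which is recognized by the sign-constrained automaton $\mathcal{A}_{\beta,(-,+)}$ built in the proof of Lemma~\ref{lemma: R^n recognizable}. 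On $S_{(+,-)}$ one has $y<0\le x$, so $X_<\cap S_{(+,-)}=\emptyset$. It remains to treat the two equal-sign cases.

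For the nonnegative case $\mathbf{z}=(+,+)$, I would first establish the order-preserving property of greedy expansions: if $x,y\ge 0$ have synchronized expansions $d_\beta(x)=x_k\cdots x_0\star x_{-1}\cdots$ and $d_\beta(y)=y_k\cdots y_0\star y_{-1}\cdots$ (padded to a common length), then $x<y$ if and only if there is an index $j\le k$ with $x_i=y_i$ for all $i>j$ and $x_j<y_j$; that is, the real order coincides with the lexicographic order on aligned digit sequences read from the most significant position. The direction that a first difference with $x_j<y_j$ forces $x<y$ follows at once from the defining greedy inequality: since $\sum_{i<j}x_i\beta^i<\beta^{j}$ and $x_j+1\le y_j$, and since the digits above position $j$ coincide, we get $x<\sum_{i>j}y_i\beta^i+(x_j+1)\beta^j\le y$. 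The converse is obtained by symmetry, a first difference with $x_j>y_j$ forcing $y<x$ and no difference forcing $x=y$.

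Granting this, I would build a deterministic Büchi automaton $\mathcal{C}$ over ${\bf \tilde{A}_\beta}\cup\{\boldsymbol{\star}\}$ with three states: $E$ (``prefixes read so far are equal''), $L$ (``$x$ already strictly smaller'') and $G$ (``$x$ already strictly larger''). From $E$, a letter $(a,b)$ leads to $L$ if $a<b$, to $G$ if $a>b$, and back to $E$ if $a=b$; the letter $\boldsymbol{\star}$ leaves the current state unchanged; $L$ and $G$ are absorbing. Only $L$ is terminal, so a run is accepting precisely when it eventually enters $L$, i.e. precisely when a first differing aligned digit with $x_j<y_j$ occurs; the ``equal forever'' run (encoding $x=y$) and the runs entering $G$ are rejected. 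Intersecting $\mathcal{C}$ with $\mathcal{A}_{\beta,(+,+)}$ restricts the input to genuine synchronized $\beta$-expansions of nonnegative pairs, and by the order-preserving property the resulting automaton accepts exactly $\mathbf{0}^*d_\beta(X_<\cap S_{(+,+)})$. For $\mathbf{z}=(-,-)$ the same construction works with the roles of $L$ and $G$ exchanged, since $x<y\iff |x|>|y|$ for negative $x,y$ and $d_\beta(x)=\overline{d_\beta(-x)}$; intersecting with $\mathcal{A}_{\beta,(-,-)}$ and applying the order-preserving property to the magnitudes yields $X_<\cap S_{(-,-)}$.

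Taking the union of the four recognizable pieces and invoking Fact~\ref{fact: 3 ways to recognize} to pass from $\mathbf{0}^*d_\beta(\cdot)$ back to $d_\beta(\cdot)$ gives the claim. The only genuinely non-routine step is the order-preserving property, everything else being direct automaton bookkeeping. The subtlety there is that, unlike in an integer base, one cannot bound tails by a naive geometric series; but the defining greedy inequality $\sum_{i\le\ell}x_i\beta^i<\beta^{\ell+1}$ supplies exactly the bound $\sum_{i<j}x_i\beta^i<\beta^{j}$ needed to separate $x$ from $y$ at their first differing digit, so no appeal to Parry's characterization (Theorem~\ref{thm: parry}) beyond the greedy condition itself is required.
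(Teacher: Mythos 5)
Your proof is correct and takes essentially the same route as the paper: both reduce the claim to the fact that the real order coincides with lexicographic comparison of the synchronized digit expansions, recognized by a small automaton that waits for the first differing aligned digit and is then intersected with the automaton accepting $d_\beta\left(\mathbb{R}^2\right)$. The only differences are organizational — the paper packages your four sign cases into a single signed-digit order $\prec$ on $\tilde{A}_\beta^+\star\tilde{A}_\beta^\omega$ recognized by one two-state automaton, and it merely asserts the equivalence $x<y \Leftrightarrow d_\beta(x)\prec d_\beta(y)$, whereas you actually justify it via the greedy inequality.
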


\begin{proof}
For all $x,y \in \mathbb{R}$, we have $x<y$ if and only if $d_\beta(x) \prec d_\beta(y)$ 
where $\prec$ is a natural generalization of the lexicographic order with negative digits: 
it is defined as the partial order over $\tilde{A}_\beta^+ \star \tilde{A}_\beta^\omega$ by, 
for $u = u_k \cdots u_1 u_0 \star u_{-1} u_{-2} \cdots$ 
and $v = v_\ell \cdots v_1 v_0 \star v_{-1} v_{-2} \cdots$, 
$u \prec v$ if and only if
\begin{eqnarray*}
	&		& 	\big[
				\big(u \in \bar{A}_\beta^+ \star \bar{A}_\beta^\omega\big)  \wedge 
				\big(v \in A_\beta^+ \star A_\beta^\omega\big)  \wedge 
				\big( u,v\not \in 0^+\star 0^\omega \big)
				\big]	\\
	& \vee 	& 	\big[
				\big(u,v \in A_\beta^+ \star A_\beta^\omega\big) \wedge (k<\ell) 
				\big] \\
	& \vee 	& 	\big[
				\big(u,v \in \bar{A}_\beta^+ \star \bar{A}_\beta^\omega\big) \wedge (k>\ell) 
				\big] \\
	& \vee 	& 	\big[
				\big(	 (u,v \in A_\beta^+ \star A_\beta^\omega) 
				\vee (u,v \in \bar{A}_\beta^+ \star \bar{A}_\beta^\omega) \big) 
				\wedge  (k=\ell)	\\
	&	&		\quad \quad \wedge \big((\exists i \leq k) [(\forall j \in \{i+1,\dots,k\})( u_j = v_j) \wedge (u_i < v_i) ] \big)
				\big].
\end{eqnarray*}

Thus, the set $X_<$ is recognized by the intersection of the automaton accepting $d_\beta\left(\mathbb{R}^2\right)$ with the automaton represented in Figure~\ref{figure: automate <}.
\begin{figure}[h!tbp]
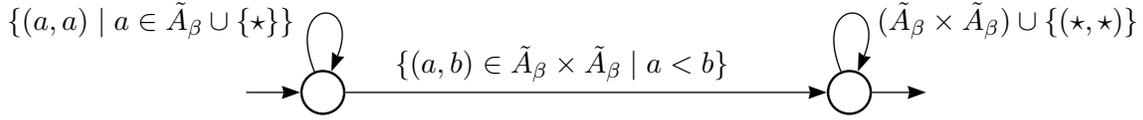

\centering
\begin{VCPicture}{(0,0)(7,2)}
\ChgStateLineWidth{.5}
\ChgEdgeLineWidth{.5}
\ChgEdgeLabelScale{.6}
\SmallState\State[]{(0,0.5)}{A}
\SmallState\State[]{(7,0.5)}{B}
\Initial{A}
\Final{B}
\EdgeL{A}{B}{\{(a,b) \in \tilde{A}_\beta\times\tilde{A}_\beta \mid a < b\}}
\CLoopN{A}{\{(a,a) \mid a \in \tilde{A}_\beta\cup \{\star\}\}}
\CLoopN[.75]{B}{(\tilde{A}_\beta \times \tilde{A}_\beta) \cup\{(\star,\star)\}}
\end{VCPicture}
\caption{Automaton for the order.}
\label{figure: automate <}
\end{figure}
\end{proof}

\begin{definition}
Let $\beta > 1$ be a real number and let $C \subset \mathbb{Z}$ be an alphabet. 
The {\em normalization function} is the function 
\[
	\nu_{\beta,C}	: C^+ \star C^{\omega} \to \tilde{A}_\beta^+ \star \tilde{A}_\beta^{\omega}
\]
that maps any $\beta$-representation of a real number $x$ onto its $\beta$-expansion $d_\beta(x)$.
\end{definition}

\begin{definition}
Given two alphabets $A$ and $B$, a {\em transducer} is a B\"uchi automaton 
given by a $6$-tuple $\mathcal{T} = (Q, A, B , E, I, T)$, 
where the edges are labeled by elements in $A^* \times B^*$ instead of considering the unique alphabet $A$. 
Thus a transducer defines a relation $R_\mathcal{T} \subseteq A^\omega \times B^\omega$ 
given by $(u,v) \in R_\mathcal{T}$ if and only if $(u,v)$ is accepted by $\mathcal{T}$. 
A transducer is said to be {\em letter-to-letter} if its edges are labeled by elements of $A \times B$. 
If $L \subseteq A^\omega$ is a language and $\mathcal{T}$ is a transducer, 
we let $\mathcal{T}(L)$ denote the language over $B$ such that $L \times \mathcal{T}(L) \subseteq R_\mathcal{T}$ 
and which is maximal (with respect to the inclusion) for this property.
\end{definition}

Frougny~\cite{Frougny92} studied the normalization $\nu_{\beta,C,{\rm frac}} : C^{\omega} \to A_\beta^{\omega}$ 
that maps any word $u \in C^\omega$ such that $\val_\beta(0 \star u) \in [0,1)$ onto the word $v \in A_\beta^\omega$ 
such that $d_\beta(\val_\beta(0\star u)) = 0\star v$. 
She proved the following result.

\begin{theorem}[Frougny~\cite{Frougny92}]\label{the:Frougny}
Let $\beta$ be a Pisot number and $C \subset \mathbb{Z}$ be an alphabet. 
The normalization $\nu_{\beta,C,{\rm frac}}$ is realizable by a (non-deterministic) letter-to-letter transducer 
$\mathcal{T}_{\beta,C,{\rm frac}} = ( Q, C, A_\beta, E, \{i\}, T)$:
For every infinite word $u \in C^\omega$ such that $\val_\beta(0\star u)\in[0,1)$, there exists a unique infinite word $v \in A_\beta^\omega$
such that $(u,v) \in R_{\mathcal{T}_{\beta,C,{\rm frac}}}$ and, moreover, $d_\beta(\val_\beta(0\star u)) = 0\star v$. 
\end{theorem}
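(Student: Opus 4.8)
The plan is to build the transducer so that its states record, through a bounded algebraic ``remainder'', whether the input representation and the guessed output representation have the same value, while a product construction forces the output to be a genuine $\beta$-expansion. Write $u=u_1u_2\cdots$ and $v=v_1v_2\cdots$ for the digits following the radix point and set $w_j=u_j-v_j$, which ranges over the finite set $C-A_\beta\subset\mathbb{Z}$. Then $\val_\beta(0\star u)=\val_\beta(0\star v)$ holds exactly when $\sum_{j\ge 1}w_j\beta^{-j}=0$. I introduce the quantity $r_i=\sum_{j=1}^{i}w_j\beta^{\,i-j}\in\mathbb{Z}[\beta]$, which satisfies the recurrence $r_i=\beta r_{i-1}+w_i$ with $r_0=0$ and is precisely the data a letter-to-letter transducer can update while reading the pair $(u_i,v_i)$. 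Since $\beta^{-i}r_i=\sum_{j=1}^{i}w_j\beta^{-j}\to\sum_{j\ge 1}w_j\beta^{-j}$, boundedness of $(r_i)$ forces $\beta^{-i}r_i\to 0$ and hence equality of values; conversely, when the series coincide one computes $r_i=-\sum_{j>i}w_j\beta^{\,i-j}$, which is bounded. Thus equality of values is equivalent to the boundedness of the sequence $(r_i)_{i\ge 1}$.

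The heart of the argument, and the place where the Pisot hypothesis is essential, is that this boundedness is detected by a finite state set. For each Galois conjugate $\beta^{(k)}$ of $\beta$ apply the corresponding field embedding to obtain $r_i^{(k)}=\sum_{j=1}^{i}w_j(\beta^{(k)})^{\,i-j}$; because $|\beta^{(k)}|<1$ and the $w_j$ are uniformly bounded, the conjugates $r_i^{(k)}$ are automatically bounded for every $i$, independently of the run. Embedding $\mathbb{Z}[\beta]$ as a full-rank lattice in $\mathbb{R}^{\deg\beta}$ through all archimedean places, the elements $r$ with $|r|\le B$ and $|r^{(k)}|\le B_k$ — for the bounds $B=\frac{\max_j|w_j|}{\beta-1}$ and $B_k=\frac{\max_j|w_j|}{1-|\beta^{(k)}|}$ that necessarily hold once the values agree — are lattice points inside a bounded box, hence finite in number. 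I take this finite set as the state space $Q$, the initial state $i$ to be $r_0=0$, and I place a transition $(r,(u_i,v_i),r')$ exactly when $r'=\beta r+(u_i-v_i)$ again lies in $Q$. Every infinite run then stays in $Q$, and such a run exists if and only if $(r_i)$ is bounded, that is, if and only if the two values coincide.

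It remains to force the output to be the canonical expansion rather than merely an equal-valued representation. Since $\beta$ is Pisot, hence Parry, Fact~\ref{fact: parry recognizable} provides a deterministic B\"uchi automaton accepting the valid fractional expansions, equivalently (via Theorem~\ref{thm: parry}) the words $v$ all of whose shifts are $<d_\beta^{\,*}(1)$. I form the product of the remainder automaton above with this automaton, feeding the second component $v_i$ of each read pair to the Parry automaton; B\"uchi acceptance of the product enforces simultaneously that $(r_i)$ stays in $Q$ and that $v$ is a legitimate $\beta$-expansion. Given an input $u$ with $\val_\beta(0\star u)=x\in[0,1)$, the greedy expansion $d_\beta(x)=0\star v$ supplies an accepting run, yielding existence; and since value equality together with the Parry condition pins $v$ down uniquely, the output is unique and equals the fractional part of $d_\beta(x)$, as claimed. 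I expect the finiteness of $Q$ to be the main obstacle: it is exactly the Pisot condition on the conjugates that converts the abstract boundedness criterion into a bona fide finite automaton, and some care is needed in choosing the bounds $B,B_k$ so that every value-preserving pair is captured while the state set stays finite.
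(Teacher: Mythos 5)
Your proof is correct and follows essentially the same route as Frougny's construction, which the paper cites rather than reproves but illustrates explicitly in its example: the states are the bounded remainders $r$ with transitions $r \xrightarrow{a|b} s$ whenever $\beta r + a - b = s$ (the ``converter''), finiteness of the state set coming from the Pisot condition on the conjugates, composed with the Parry automaton for $D(\beta)$ to force the output to be the canonical expansion. No gaps: the equivalence of value-equality with boundedness of the remainders, the lattice-point finiteness argument, and the existence/uniqueness via the greedy expansion are all exactly the standard argument.
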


\begin{example}
Let $\varphi$ be the Golden ratio $(1+\sqrt{5})/2$. Frougny's transducer $\mathcal{T}_{\varphi,C,{\rm frac}}$ 
for the alphabet $C=\{\bar{1},0,1\}$ is the composition of the transducer $\mathcal{T}$ 
depicted in Figure~\ref{transducteur-Frougny} and the one realizing the intersection 
with the set $D(\varphi)=\{w \in A_\varphi^\omega \mid \exists x\in[0,1)\  d_\varphi(x)=0\star w\}
=\{w\in \{0,1\}^\omega\mid w \text{ does not contain } 11 \text{ as a factor}\}$. 
In Figure~\ref{transducteur-Frougny} all states are considered terminal.
To obtain the transducer $\mathcal{T}_{\varphi,C,{\rm frac}}$, 
we make the product of $\mathcal{T}$ and the automaton in Figure~\ref{figure:aut-Fibo}:
each state $q$ of the transducer $\mathcal{T}$ is split into 2 states $(q,0)$ and $(q,1)$ 
in order to maintain the information whether 
the last  output letter  is $0$ or $1$.
If it was $1$, then it  is forbidden to output another $1$ at the next step. More precisely, 
$(p,i)\xrightarrow{a|b}(q,j)$ is an edge if and only if $p\xrightarrow{a|b}q$ is an edge in $\mathcal{T}$ 
and $(i,b,j)$ equals $(0,0,0)$, $(0,1,1)$ or $(1,0,0)$.
\begin{figure}[h!tbp]
\centering
\begin{VCPicture}{(-5,-7.5)(8,4.5)}
\ChgStateLineWidth{.5}
\ChgEdgeLineWidth{.5}
\ChgEdgeLabelScale{.6}
\ChgStateLabelScale{.5}
\SmallState
\StateVar[-\varphi]{(-3.5,3)}{A}
\StateVar[-1]{(-3.5,0)}{B}
\StateVar[0]{(0,0)}{C}
\StateVar[1]{(3.5,0)}{D}
\StateVar[-\varphi+1]{(-3.5,-3.5)}{E}
\StateVar[\varphi-2]{(-3.5,-6.5)}{F}
\StateVar[\varphi-1]{(3.5,-3.5)}{G}
\StateVar[\varphi]{(7,0)}{H}
\StateVar[\varphi+1]{(3.5,3)}{I}
\StateVar[2\varphi]{(0,3)}{J}
\StateVar[-\varphi+2]{(0,-6.5)}{K}
\StateVar[2\varphi-1]{(7,3)}{L}

\Initial[s]{C}

\small
\CLoopN{A}{1|0}

\EdgeR[.6]{B}{A}{0|0,1|1}
\VArcL[.5]{arcangle=10}{B}{E}{1|0}

\EdgeR{C}{B}{0|1, \bar{1}|0}
\CLoopN[.5]{C}{0|0,1|1}
\EdgeL{C}{D}{1|0}

\VArcL[.5]{arcangle=10}{D}{G}{0|1,\bar{1}|0}
\EdgeR{D}{H}{0|0,1|1}
\EdgeL{D}{I}{1|0}

\VArcL[.5]{arcangle=10}{E}{B}{0|0,1|1}

\VArcL[.75]{arcangle=35}{F}{A}{0|1,\bar{1}|0}
\EdgeR[.6]{F}{E}{0|0,1|1}
\VArcL[.5]{arcangle=10}{F}{K}{1|0}

\ArcL[.2]{G}{B}{\bar{1}|1}
\VArcL[.7]{arcangle=5}{G}{C}{0|1,\bar{1}|0}
\VArcL[.5]{arcangle=10}{G}{D}{0|0,1|1}

\CLoopE[.75]{H}{0|1,\bar{1}|0}
\EdgeL{H}{I}{0|0,1|1}
\ArcL[.7]{H}{G}{\bar{1}|1}

\EdgeR{I}{J}{0|1,\bar{1}|0}
\VArcL[.5]{arcangle=10}{I}{L}{\bar{1}|1}

\CLoopN{J}{\bar{1}|1}

\VArcR[.5]{arcangle=-40}{K}{H}{1|0}
\VArcL[.5]{arcangle=10}{K}{F}{0|1,\bar{1}|0}
\EdgeR{K}{G}{1|0}

\EdgeL{L}{H}{\bar{1}|1}
\VArcL[.4]{arcangle=10}{L}{I}{0|1,\bar{1}|0}

\EdgeBorder
\EdgeL[.7]{E}{C}{1|0}
\EdgeR[.7]{D}{F}{\bar{1}|1}
\EdgeBorderOff
\end{VCPicture}
\caption{The transducer $\mathcal{T}$.}
\label{transducteur-Frougny}
\end{figure}
Note that there is a transition in $\mathcal{T}$ of the form $r \xrightarrow{a|b} s$ whenever $\varphi r +a-b=s$.
In \cite[Proposition 2.3.38]{Frougny-CANT} this transducer is called the converter $\mathcal{C}_\varphi$.
\end{example}

\begin{remark}
\label{remark: transducer unicite de chemin}
The transducer $\mathcal{T}_{\beta,C,{\rm frac}}$ is built in such a way that for each state $q$ in $Q$, 
there is at most one pair of words $(0^k,u)$ such that 
$u$ does not begin with $0$ and that labels a path from $i$ to $q$.
\end{remark}

The next result extends Theorem~\ref{the:Frougny} to $\nu_{\beta,C}$. 
We first need to allow a transducer to have an initial function instead of initial states. 

\begin{definition}
A {\em transducer with an initial function} is a transducer given by a $6$-tuple 
$\mathcal{B} = (Q, A, B , E, \alpha, T)$, where $\alpha$ is a partial function over $Q$ with values in $A^* \times B^*$. 
A pair of infinite words $(u,v) \in A^\omega \times B^\omega$ 
is accepted by such a transducer if there is a state $q \in \dom(\alpha)$ 
and a pair of infinite words $(u',v') \in A^\omega \times B^\omega$ 
such that $(u,v) = \alpha(q) (u',v')$\footnote{Concatenation of letters in $A\times B$ 
is as follows: $(a,b)(c,d)=(ac,bd)$.} and $(u',v')$ labels an infinite path in $\mathcal{B}$ 
starting in $q$ and going infinitely often through terminal states.
\end{definition}

\begin{prop}
\label{proposition: normalization}
Let $\beta > 1$ be a Pisot number and let $C \subset \mathbb{Z}$ be an alphabet. 
The normalization $\nu_{\beta,C}$ 
is realizable by a  (non-deterministic) letter-to-letter transducer 
$\mathcal{T}_{\beta,C} = ( Q_{\beta,C}, C \cup \{\star\}, \tilde{A}_\beta\cup\{\star\}, E, \alpha_{\beta,C}, T_{\beta,C})$ 
with an initial function 
$\alpha_{\beta,C}: Q_{\beta,C} \to \{\varepsilon\} \times \tilde{A}_{\beta}^*$:
For every infinite word $u \in C^+ \star C^\omega$, there exists a unique infinite word 
$v\in(\tilde{A}_\beta\cup\{\star\})^\omega$
such that $(u,v) \in R_{\mathcal{T}_{\beta,C}}$ and, moreover, $v\in 0^* d_\beta(\val_\beta(u))$.
\end{prop}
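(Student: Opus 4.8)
The plan is to reduce the normalization of a full $\beta$-representation $u = u' \star u''$ (with $u' \in C^+$ the integer part and $u'' \in C^\omega$ the fractional part) to two subtasks that we can handle with Frougny's transducer $\mathcal{T}_{\beta,C,\mathrm{frac}}$ from Theorem~\ref{the:Frougny}, plus a carry-propagation argument linking them. The essential difficulty is that $\mathcal{T}_{\beta,C,\mathrm{frac}}$ only normalizes representations of numbers in $[0,1)$; for an arbitrary representation the integer part may carry into the fractional part (and vice versa, since digits in $C$ can exceed $\lceil\beta\rceil-1$ in modulus), so the two parts cannot be normalized independently. The device of allowing an \emph{initial function} with values in $\{\varepsilon\}\times\tilde{A}_\beta^*$ is precisely what lets us emit a finite prefix of the output $\beta$-expansion ``for free'' to absorb the integer part and leading zeroes before entering the infinite letter-to-letter behaviour.

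First I would set $x = \val_\beta(u)$ and write $d_\beta(x) = w \star v$ with $w \in \tilde{A}_\beta^*$ finite and $v \in \tilde{A}_\beta^\omega$; the target output is $0^* d_\beta(x)$, so I must produce $0^{j} w \star v$ for the appropriate padding. I would treat the sign cases via the morphism $\overline{\,\cdot\,}$ (negating all digits reduces the $\sign(x)=-$ case to the positive one), so assume $x \ge 0$. The integer part $\lfloor x\rfloor_\beta$ is determined by finitely many digits, and its $\beta$-expansion $w$ together with any carry into the fractional part is a \emph{finite} computation: I would encode the finitely many reachable ``carry states'' of this integer-part normalization into the domain of the initial function $\alpha_{\beta,C}$, assigning to each such state the finite output word $\varepsilon \,|\, w$ it has committed to. This is where Remark~\ref{remark: transducer unicite de chemin} is used: the uniqueness of the pair $(0^k,u)$ labelling a path from the initial state to a given state guarantees that the prefix we read off is well-defined, so $\alpha_{\beta,C}$ is genuinely a partial \emph{function} and the output $v$ attached to each input is unique.

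From each initial-function state the transducer must then run Frougny's fractional transducer $\mathcal{T}_{\beta,C,\mathrm{frac}}$ on the remaining tail, which by Theorem~\ref{the:Frougny} normalizes exactly the fractional representations in $[0,1)$ letter-to-letter and deterministically in its output. Concretely I would take $Q_{\beta,C}$ to be (a copy of) the states of $\mathcal{T}_{\beta,C,\mathrm{frac}}$, let the transitions $E$ be those of $\mathcal{T}_{\beta,C,\mathrm{frac}}$ together with a $\star\,|\,\star$ transition crossing the radix point, and let $T_{\beta,C}$ be the terminal states inherited from Frougny's transducer. The correctness statement ``$(u,v)\in R_{\mathcal{T}_{\beta,C}}$ with $v\in 0^* d_\beta(\val_\beta(u))$ unique'' then follows by combining: (i) the finiteness and uniqueness of the integer-part/carry computation encoded in $\alpha_{\beta,C}$, and (ii) the existence-and-uniqueness clause of Theorem~\ref{the:Frougny} applied to the fractional tail after the carry has been absorbed.

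The main obstacle I anticipate is the carry bookkeeping at the radix point: after consuming the integer block $u'$, the residual real number fed into the fractional stage need not lie in $[0,1)$ on the nose, because a digit of $u''$ combined with an incoming carry can represent a value $\ge 1$ that must be pushed back left. I would handle this by choosing $k$ (the number of integer-part digits buffered before the $\star$ transition fires) large enough that the normalized integer part is stable, so that the value presented to the fractional transducer is guaranteed to be a valid representation of a number in $[0,1)$; the Pisot hypothesis enters exactly here, since it is what bounds the carries and makes Frougny's letter-to-letter transducer exist in the first place. Verifying that the buffered-prefix construction meets the hypotheses of Theorem~\ref{the:Frougny} — rather than the construction of the transducer itself, which is routine — is the technical heart of the argument.
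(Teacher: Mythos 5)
There is a genuine gap: you have misread what the initial function must emit, and the construction you describe can never output a word of $0^*d_\beta(\val_\beta(u))$ when $\val_\beta(u)\ge 1$. Since $\alpha_{\beta,C}$ consumes no input (its values lie in $\{\varepsilon\}\times\tilde{A}_\beta^*$), the \emph{entire} input $u=u_k\cdots u_0\star u_{-1}u_{-2}\cdots$ must still be read by the letter-to-letter part, so an initial emission of length $p$ shifts the input/output alignment by $p$ for the whole run: input position $i$ pairs with output position $i+p$. If, as you propose, $\alpha$ emits the full normalized integer part $w$, where $d_\beta(\val_\beta(u))=v_\ell\cdots v_0\star v_{-1}\cdots$ and $p=\ell+1$, then the output $\star$ (forced by the $\star|\star$ transition to pair with the input $\star$ at position $k+1$) sits at position $k+\ell+2$; but any word of $0^*d_\beta(\val_\beta(u))$ that begins with $w$ (whose leading digit is nonzero) is $d_\beta(\val_\beta(u))$ itself, with only $\ell+1$ symbols before its $\star$. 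The two can never coincide. What the paper does instead is to let the initial function emit only the \emph{length excess} $d_\beta(\val_\beta(u))[1\colon\ell-k]$, i.e. exactly $\ell-k$ symbols in the case $\ell>k$ (this excess is bounded by $\lfloor\log_\beta(c/(\beta-1))\rfloor$ with $c=\max C$); the remaining $k+1$ integer digits of the output are then produced letter-to-letter against $u_k\cdots u_0$ by a \emph{second} copy of Frougny's transducer (a $\beta$-integer-part copy), with transitions $(\star,\star)$ from each state $(q,{\rm int})$ to $(q,{\rm frac})$ carrying the carry information across the radix point. Your single-copy architecture omits this integer-part copy entirely.

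A second, related error is the premise that ``the integer part is determined by finitely many digits'' and is ``a finite computation'' whose carry states can commit to $w$. This is false: digits of $C$ may exceed $\lceil\beta\rceil-1$ in modulus, so the infinite fractional tail can carry into the integer part. For instance, with $\beta=\varphi$ and $C=\{0,1,2\}$ one has $\val_\varphi(1\star 2^\omega)=1+2\varphi=\varphi^3$, whose expansion is $1000\star 0^\omega$: the input has one integer digit, the normalized expansion has four, and no finite prefix of the input determines this. Consequently no state reached after reading a finite prefix of $u$ can ``commit'' to $w$; the transducer must stay non-deterministic, with correctness enforced in the limit by the B\"uchi condition. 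This is also why the paper's initial function does not depend on reading any input: its domain consists of the states reached in the integer-part copy along paths labelled $(0^{\ell-k},d_\beta(\val_\beta(u))[1\colon\ell-k])$ — padding zeros on the input side — with Remark~\ref{remark: transducer unicite de chemin} guaranteeing well-definedness, which is the one ingredient you invoked correctly. Finally, ``buffering $k$ integer-part digits before the $\star$ transition fires'' is not an available move: a letter-to-letter transducer can never withhold output while consuming input. Your sign reduction and your appeal to Theorem~\ref{the:Frougny} and to the Pisot hypothesis are fine, but the core construction needs to be replaced by the two-copy, excess-only scheme just described.
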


\begin{proof}
Let us construct such a transducer $\mathcal{T}_{\beta,C}^+$ 
but only for words $u \in C^+ \star C^\omega$ such that $\val_\beta(u) \geq 0$. 
The case where $\val_\beta(u)<0$  is obtained by considering two copies of $\mathcal{T}_{\beta,C}^+$ where, 
in one of them, we have exchanged all labels by their respective opposite value.

As in the proof of Lemma~\ref{lemma: R^n recognizable}, we consider the transducer $\mathcal{T}$ 
that consists of two copies of $\mathcal{T}_{\beta,C,{\rm frac}}$, 
one for the $\beta$-integer part and one for the $\beta$-fractional part. 
The initial state of $\mathcal{T}$ is the initial state of the $\beta$-integer part copy 
and the terminal states of $\mathcal{T}$ are the terminal states of the $\beta$-fractional part copy. 
For any state $q$ of $\mathcal{T}_{\beta,C,{\rm frac}}$, we add a transition labeled 
by $(\star,\star)$ from $(q,{\rm int})$ to $(q,{\rm frac})$ where $(q,{\rm int})$ 
(resp. $(q,{\rm frac})$) is the state of $\mathcal{T}$ that corresponds to $q$ 
in the $\beta$-integer part copy (resp. $\beta$-fractional part copy).

By construction of $\mathcal{T}$, a pair of words 
$(u,v)\in (C^+ \star C^\omega)\times (\tilde{A}_\beta\cup\{\star\})^\omega$ 
is accepted by $\mathcal{T}$ if and only if 
$(u,v) \in (C \times A_\beta)^+ (\star,\star) (C \times A_\beta)^{\omega}$ 
and $v \in 0^* d_\beta(\val_\beta(u))$. 
Thus, what remains to consider is the case 
where $u = u_k \cdots u_0 \star u_{-1} u_{-2} \cdots \in C^+ \star C^{\omega}$ 
is such that $d_\beta(\val_\beta(u)) = v_\ell \cdots v_0 \star v_{-1} v_{-2} \in A_\beta^+ \star A_\beta^\omega$ with $\ell > k$.
The purpose of the initial function is to deal with this situation.

To be able to define such an initial function, it is first easily seen that in that case, the difference between $k$ and $\ell$ is bounded. 
Indeed, if $c = \max C$, we have 
\[
	\val_\beta (u) \leq \frac{c \beta^k}{\beta-1}.
\]
Note that we have $c\ge 1$ because $\val_\beta(u)\ge0$ and $\ell>k$.
The integer $\ell$ is the greatest one for which $\beta^{\ell} \leq \frac{c \beta^k}{\beta-1}$, 
which is equivalent to 
\[
	\ell -k \leq \left\lfloor \log_\beta\left( \frac{c}{\beta-1} \right) \right\rfloor =: K.
\]

Now let $u = u_k \cdots u_0 \star u_{-1} u_{-2} \cdots \in C^+ \star C^{\omega}$ 
be such that $d_\beta(\val_\beta(u)) 
= v_\ell \cdots v_0 \star v_{-1} v_{-2} \in A_\beta^+ \star A_\beta^\omega$ with $\ell > k$. 
By construction of $\mathcal{T}$, the pair of words 
$(0^{\ell-k}u,d_\beta(\val_\beta(u)))$ is accepted by $\mathcal{T}$. 
Thus, if $q$ is a state reached after reading $(0^{\ell-k},d_\beta(\val_\beta(u))[1\colon\ell-k])$,
we consider an initial function $\alpha$ defined by
$\alpha(q) = (\varepsilon,d_\beta(\val_\beta(u))[1\colon\ell-k])$.
Notice that Remark~\ref{remark: transducer unicite de chemin} 
implies that we can always define $\alpha$ in such a way.

Finally $\mathcal{T}_{\beta,C}^+$ is the transducer $\mathcal{T}$ to which we add the initial function $\alpha$. 
\end{proof}

The following result is folklore.
The proof is not very difficult but quite long and technical. 
Therefore we will only sketch the proof on an example.

\begin{lemma} \label{fact: transducer conserve la regularite}
If $\mathcal{T} = ( Q, A, B, E, \alpha, T )$ is a letter-to-letter transducer 
with partial initial function $\alpha: Q \to A^{\leq k} \times B^{\leq \ell}$ 
for some positive constants $k$ and $\ell$, then the relation $R_{\mathcal T}$ 
realized by $\mathcal T$ is an $\omega$-regular language over the alphabet $A\times B$.
In particular, if $L$ is an $\omega$-regular language over an alphabet $A$ 
then  $\mathcal{T}(L)$ is an $\omega$-regular language over $B$.
\end{lemma}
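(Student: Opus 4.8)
The plan is to first establish that $R_{\mathcal T}$ is an $\omega$-regular language over $A\times B$, and then to deduce the ``in particular'' part by elementary closure operations. Throughout I identify a pair $(u,v)\in A^\omega\times B^\omega$ with the infinite word $u\times v$ over $A\times B$; this is legitimate here because $\mathcal T$ is letter-to-letter and the initial function only prepends finite words, so both components of any accepted pair are genuinely infinite and $u\times v$ is well defined. Granting that $R_{\mathcal T}$ is $\omega$-regular, the image reading of $\mathcal T(L)$ is $\pi_2\big((L\times B^\omega)\cap R_{\mathcal T}\big)$, which is $\omega$-regular by Facts~\ref{fact: buchi stable}: $L\times B^\omega$ is a product, its intersection with $R_{\mathcal T}$ preserves regularity, and so does the projection $\pi_2$. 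If one instead keeps the maximal-set formulation of the definition, the same conclusion follows after additionally invoking closure under complementation, since that set equals $B^\omega\setminus\pi_2\big((L\times B^\omega)\cap\overline{R_{\mathcal T}}\big)$. Thus the second assertion reduces to the first.

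To prove that $R_{\mathcal T}$ is $\omega$-regular, I would first split it as the finite union $R_{\mathcal T}=\bigcup_{q\in\dom\alpha}R_q$, where $R_q$ collects the accepted pairs that use the initial datum $\alpha(q)$; since $Q$ is finite and $\omega$-regular languages are closed under finite union, it suffices to treat each $R_q$ separately. The ``body'' of $R_q$ comes from the underlying letter-to-letter transducer, which, once a label $(a,b)\in A\times B$ is read as a single letter, is nothing but an ordinary Büchi automaton over the alphabet $A\times B$. Hence the set $L_q$ of words $u'\times v'$ labelling accepting paths that start in $q$ and visit $T$ infinitely often is exactly the language of that Büchi automaton with $q$ as sole initial state, and is therefore $\omega$-regular. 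It then remains to account for the prefix $\alpha(q)=(p_q,s_q)$, that is, to show that $R_q=\{(p_qu')\times(s_qv')\mid u'\times v'\in L_q\}$ is $\omega$-regular.

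The technical heart, and the step I expect to be the main obstacle, is precisely this last point when $|p_q|\neq|s_q|$: prepending words of different lengths to the two tracks shifts the first coordinate relative to the second by $\delta:=\big||p_q|-|s_q|\big|$, so the pairs that $L_q$ expects to see are no longer aligned in $(p_qu')\times(s_qv')$. When $|p_q|=|s_q|$ this is harmless, since one merely prepends the fixed finite word $p_q\times s_q$ to $L_q$, an operation preserving $\omega$-regularity. In general I would build a Büchi automaton that first runs through a short fixed chain of states checking that the two initial segments agree with $p_q$ and $s_q$, while loading a buffer of size $\delta$ that delays the lagging track; it then simulates the automaton for $L_q$ on the re-synchronised pairs, with acceptance condition inherited from $T$. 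The key point is that $\alpha$ takes values in $A^{\le k}\times B^{\le\ell}$, so $\delta\le\max(k,\ell)$ is bounded and the delay line has finitely many states; this is exactly where the hypothesis on $\alpha$ is used. The construction is correct but, as the statement warns, long and notation-heavy, so I would present it on a concrete example rather than in full generality.
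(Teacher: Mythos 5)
Your proposal is correct, and its key insight --- that the misalignment created by the initial function is bounded and can therefore be absorbed into finitely many states --- is exactly the one the paper's sketch relies on; the packaging, however, differs. The paper eliminates the initial function \emph{iteratively}: for each initial state it takes a copy of the transducer, duplicates states so that each new state records one pending output letter, relabels the output components of the transitions accordingly, and peels off one letter of $\alpha(q)$ at a time until its second component is empty; it only illustrates this on an example with $\alpha(q)\in\{\varepsilon\}\times B^{\le\ell}$, the case needed for Proposition~\ref{proposition: normalization}. You instead decompose $R_{\mathcal T}=\bigcup_{q\in\dom\alpha}R_q$, check the two prefixes $p_q$ and $s_q$ along a fixed initial chain, and re-synchronise the two tracks in one shot with a buffer of size $\delta=\bigl|\,|p_q|-|s_q|\,\bigr|\le\max(k,\ell)$, which handles the general signature $\alpha\colon Q\to A^{\le k}\times B^{\le\ell}$ uniformly --- closer to what the lemma actually asserts. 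You also make explicit what the paper leaves unsaid: that the ``in particular'' clause follows from Facts~\ref{fact: buchi stable} via $\mathcal{T}(L)=\pi_2\bigl((L\times B^\omega)\cap R_{\mathcal T}\bigr)$ under the image reading of $\mathcal{T}(L)$, or via $B^\omega\setminus\pi_2\bigl((L\times B^\omega)\cap\overline{R_{\mathcal T}}\bigr)$ under the paper's literal maximal-set definition. One wording slip: the buffer must hold and delay the \emph{leading} track so that the lagging one catches up; ``delays the lagging track'' has it backwards, though the construction you describe is otherwise the intended one.
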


\begin{proof}[Sketch of the proof]
Consider the transducer $\mathcal T$ of Figure~\ref{fig:example-transducer}.
\begin{figure}[h!tbp]
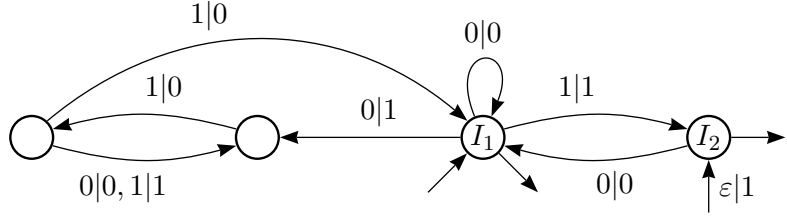

\centering
\begin{VCPicture}{(0,0)(9,3)}
\ChgStateLineWidth{.5}
\ChgEdgeLineWidth{.5}
\ChgEdgeLabelScale{.6}
\ChgStateLabelScale{.6}
\SmallState\State[]{(0,1)}{A}
\SmallState\State[]{(3,1)}{B}
\SmallState\State[I_1]{(6,1)}{C}
\SmallState\State[I_2]{(9,1)}{D}
\HideState
\SmallState\State[]{(9,-0.3)}{E}
\Initial[sw]{C}
\Final[se]{C}
\Final[e]{D}
\VArcL{arcangle=40}{A}{C}{1|0}
\ArcR{A}{B}{0|0,1|1}
\ArcR{B}{A}{1|0}
\EdgeR{C}{B}{0|1}
\ArcL{C}{D}{1|1}
\ArcL{D}{C}{0|0}
\CLoopN[0.5]{C}{0|0}
\EdgeR{E}{D}{\varepsilon|1}
\end{VCPicture}
\caption{Transducer with partial initial function.}
\label{fig:example-transducer}
\end{figure}
We build a new transducer consisting of two modified copies of $\mathcal T$, one for each initial state.
The copy associated with $I_1$ is unchanged, except that $I_2$ is no longer initial. 
In the copy associated with $I_2$, we remove the initial function from $I_2$ 
and then we must take into account the generated shift caused by the incoming digit $1$.
From $I_2$ we must write $1$ instead of $0$.
Each state is duplicated if one can write $0$ or $1$ when entering this state and we record this information 
by labeling each new state by $0$ or $1$. 
Else we label the state by the unique digit that one can write when entering the state.
Each new state has the same incoming and outgoing transitions as the state it comes from.
The first component of the labels of the transitions are unchanged 
while the second components are the labels of the outgoing corresponding state. 
The transducer that we obtain (see Figure~\ref{fig:resultat}) 
\begin{figure}[h!tbp]
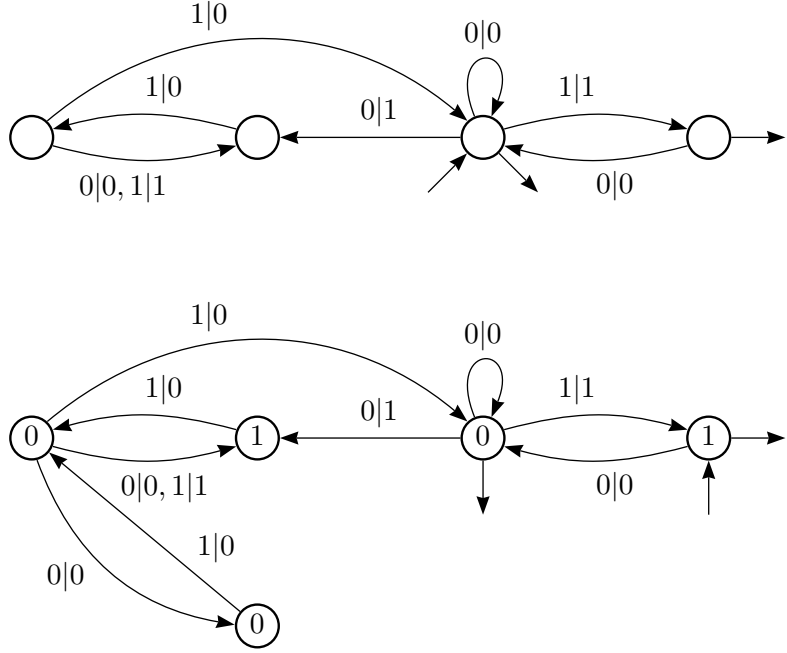

\centering
\begin{VCPicture}{(0,1)(9,10)}
\ChgStateLineWidth{.5}
\ChgEdgeLineWidth{.5}
\ChgEdgeLabelScale{.6}
\ChgStateLabelScale{.6}
\SmallPicture 
\SmallState\State[]{(0,8)}{A}
\SmallState\State[]{(3,8)}{B}
\SmallState\State[]{(6,8)}{C}
\SmallState\State[]{(9,8)}{D}
\Initial[sw]{C}
\Final[se]{C}
\Final[e]{D}
\VArcL{arcangle=40}{A}{C}{1|0}
\ArcR{A}{B}{0|0,1|1}
\ArcR{B}{A}{1|0}
\EdgeR{C}{B}{0|1}
\ArcL{C}{D}{1|1}
\ArcL{D}{C}{0|0}
\CLoopN[0.5]{C}{0|0}
\SmallState\State[0]{(0,4)}{EE}
\SmallState\State[1]{(3,4)}{F}
\SmallState\State[0]{(6,4)}{G}
\SmallState\State[1]{(9,4)}{H} 
\SmallState\State[0]{(3,1.5)}{FF} 
\Initial[s]{H}
\Final[s]{G}
\Final[e]{H}
\VArcL{arcangle=40}{EE}{G}{1|0}
\ArcR[0.6]{EE}{F}{0|0,1|1}
\ArcR{F}{EE}{1|0}
\EdgeR{G}{F}{0|1}
\ArcL{G}{H}{1|1}
\ArcL{H}{G}{0|0}
\CLoopN[0.5]{G}{0|0}
\LArcR{EE}{FF}{0|0}
\EdgeR[0.25]{FF}{EE}{1|0}
\end{VCPicture}
\caption{Modified transducer without a partial initial function.}
\label{fig:resultat}
\end{figure}
and the transducer $\mathcal T$ both recognize the same language.

If, for a state $q$, the initial function $\alpha$ is such that $\alpha(q)=(\varepsilon,u_1\cdots u_n)$ 
where $n\ge2$ and $u_i$ are digits, 
then in the copy corresponding to $q$, the new initial state $q'$ is such that $\alpha'(q')=(\varepsilon,u_1\cdots u_{n-1})$.
Then we iterate this process until the second component of the initial function is empty.
\end{proof}

\begin{corollary} \label{corollary: + rec}
Let $X,Y\subset \mathbb{R}$.
If $X$ and $Y$ are $\beta$-recognizable, then so is $X+Y$.
\end{corollary}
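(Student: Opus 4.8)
The plan is to add two $\beta$-recognizable sets ``digit by digit'' and then correct the resulting non-canonical representations via the normalization transducer of Proposition~\ref{proposition: normalization}, which is available because $\beta$ is a Pisot number. Since the statement concerns $X,Y\subset\mathbb{R}$, we have $n=1$ and all alphabets below are the one-dimensional ones.

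First I would pass to aligned representations. By Fact~\ref{fact: 3 ways to recognize}, the languages $L_X = 0^* d_\beta(X)$ and $L_Y = 0^* d_\beta(Y)$ are $\omega$-regular over $\tilde{A}_\beta \cup \{\star\}$. Using Fact~\ref{fact: buchi stable}(2) I form the product $L_X \times L_Y$ and intersect it with the (regular) set of words over $(\tilde{A}_\beta \cup\{\star\})^2$ whose two components carry their unique $\star$ in the same position, namely $(\tilde{A}_\beta^2)^+ (\star,\star) (\tilde{A}_\beta^2)^\omega$. Call $P$ this $\omega$-regular intersection. Each word of $P$ is a pair of synchronized $\beta$-representations of some $x\in X$ and $y\in Y$, and conversely every pair $(x,y)\in X\times Y$ yields, after padding the shorter integer part with leading zeroes, such a synchronized pair in $P$.

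Next I would add the digits. Let $C = \{a+b \mid a,b\in\tilde{A}_\beta\}\subset\mathbb{Z}$ and apply the letter-to-letter morphism $\phi$ defined by $\phi(a,b)=a+b$ and $\phi(\star,\star)=\star$. By Fact~\ref{fact: buchi stable}(1) the image $M=\phi(P)$ is $\omega$-regular over $C\cup\{\star\}$, and $M\subseteq C^+ \star C^\omega$. By construction each $w\in M$ satisfies $\val_\beta(w)=x+y$ for the corresponding $x,y$, so $\val_\beta(M)=X+Y$; the words of $M$ are genuine $\beta$-representations, but in general not canonical $\beta$-expansions since the digits of $C$ may exceed $\lceil\beta\rceil-1$ (or be smaller than $-\lceil\beta\rceil+1$).

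Finally I would normalize. Proposition~\ref{proposition: normalization} provides the letter-to-letter transducer with initial function $\mathcal{T}_{\beta,C}$ realizing $\nu_{\beta,C}$, and Lemma~\ref{fact: transducer conserve la regularite} ensures that $\mathcal{T}_{\beta,C}(M)$ is again $\omega$-regular. By the proposition each $w\in M$ is sent to a word in $0^* d_\beta(\val_\beta(w))$, so $\mathcal{T}_{\beta,C}(M)$ is an $\omega$-regular language contained in $0^* d_\beta(X+Y)$ and containing a representative of every element of $X+Y$; hence by Fact~\ref{fact: 3 ways to recognize} the set $X+Y$ is $\beta$-recognizable. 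The one genuinely nontrivial ingredient is this last step: digit-wise addition leaves the carry propagation undone, and only the Pisot hypothesis guarantees---through Frougny's theorem and its extension in Proposition~\ref{proposition: normalization}---that the map sending a $\beta$-representation over the enlarged alphabet $C$ to its $\beta$-expansion is realized by a finite (letter-to-letter) transducer, thereby preserving $\omega$-regularity.
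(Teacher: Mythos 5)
Your proposal is correct and follows essentially the same route as the paper's proof: synchronize the two representations via a product automaton with aligned $\star$ symbols, apply the digit-wise addition morphism to get an $\omega$-regular language of $\beta$-representations over the doubled alphabet with $\val_\beta$-image $X+Y$, and then normalize using the transducer of Proposition~\ref{proposition: normalization} together with Lemma~\ref{fact: transducer conserve la regularite} and Fact~\ref{fact: 3 ways to recognize}. The only cosmetic difference is that the paper phrases the synchronization as requiring exactly one occurrence of $(\star,\star)$ and names the enlarged alphabet $B_\beta$, which coincides with your $C$.
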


\begin{proof}
Let $\mathcal{A}_X$ and $\mathcal{A}_Y$ be the B\"uchi automata that respectively 
accept $d_\beta(X)$ and $d_\beta(Y)$.
We build another automaton $\mathcal{A}_{X \times Y}$ that accepts $d_\beta(X \times Y)$ by intersecting 
an automaton accepting $0^*d_\beta(X) \times 0^*d_\beta(Y)$ (see Fact~\ref{fact: buchi stable})
with an automaton accepting the words that contain exactly one occurrence of the letter $(\star,\star)$.
Now, let us consider the automaton $\mathcal{A}_+$ obtained from $\mathcal{A}_{X \times Y}$ 
by replacing each label $(a,b)$ by $a+b$ and replacing $(\star,\star)$ by $\star$. 
By construction, the automaton $\mathcal{A}_+$ accepts a language $L$ over the alphabet 
$B_\beta = \{0,1,2,\dots, 2(\lceil \beta \rceil-1), \bar{1},\bar{2},\dots,\overline{2(\lceil \beta \rceil-1)}\}$ 
such that $\val_\beta(L) = X+Y$. 
Finally, using the transducer $\mathcal{T}_{\beta,B_\beta}$ of Proposition~\ref{proposition: normalization}, 
Lemma~\ref{fact: transducer conserve la regularite} and Fact~\ref{fact: 3 ways to recognize} 
imply that $X+Y$ is $\beta$-recognizable.
\end{proof}

\begin{lemma} \label{lemma: X_beta rec}
For all $a \in \tilde{A}_\beta$, the set 
$X_a = \{(x,y) \in \mathbb{R}^2 \mid X_{\beta,a}(x,y) \text{ is true}\}$ is $\beta$-recognizable.
\end{lemma}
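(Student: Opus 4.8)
The plan is to reduce the predicate $X_{\beta,a}$ to a purely local condition on the synchronized $\beta$-expansion of the pair $(x,y)$, and then to recognize this condition by a small Büchi automaton intersected with the validity automaton provided by Lemma~\ref{lemma: R^n recognizable}.

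First I would reformulate the predicate. By definition $X_{\beta,a}(x,y)$ forces $y=\beta^i$ for some integer $i$, so the $y$-component of $d_\beta((x,y))$ is, up to leading zeroes, of the form $0^* 1 0^* \star 0^\omega$ with a single digit $1$ sitting at the position of weight $\beta^i$ (before the $\star$ if $i\ge 0$, after it if $i<0$). I claim that, taking into account the leading-zero padding used to synchronize the two expansions and the identification $\bar 0 = 0$, both clauses of the definition say exactly the same thing: the digit of $x$ occurring at the position of weight $\beta^i$ equals $a$. Indeed, if $|x|\ge y=\beta^i$ this is the clause $x_i=a$ by definition; and if $|x|<\beta^i$, then the position of weight $\beta^i$ lies strictly above the leading digit of $x$, so the corresponding padded digit of $x$ is $0$, matching the clause $a=0$. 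Thus, after prepending leading zeroes, $X_a$ is the set of valid synchronized expansions whose $y$-track carries a single $1$ and whose $x$-track carries the letter $a$ at that very position.

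This local condition is recognized by a Büchi automaton $\mathcal{B}_a$ over $\tilde{A}_\beta\times\tilde{A}_\beta\cup\{(\star,\star)\}$ with two states $q_0$ (initial) and $q_1$ (terminal). In $q_0$ there are self-loops on every letter $(c,0)$ with $c\in\tilde{A}_\beta$ and on $(\star,\star)$, together with a single transition $q_0\xrightarrow{(a,1)}q_1$; in $q_1$ there are again self-loops on every $(c,0)$ and on $(\star,\star)$, and no transition reads a $1$ on the $y$-track. Any accepting run must reach $q_1$, hence read the pair $(a,1)$ exactly once and thereafter see only zeroes on the $y$-track; conversely such runs loop forever in $q_1$ and so visit the terminal state infinitely often. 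Therefore $L(\mathcal{B}_a)$ consists precisely of the infinite words whose $y$-track is a single $1$ among zeroes and whose $x$-track equals $a$ at that position, the self-loop $(0,0)$ in $q_0$ absorbing any number of leading $\mathbf{0}$'s.

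Finally I would intersect $L(\mathcal{B}_a)$ with an automaton accepting $\mathbf{0}^* d_\beta(\mathbb{R}^2)$, which exists by Lemma~\ref{lemma: R^n recognizable}, closure of $\omega$-regular languages under intersection being Fact~\ref{fact: buchi stable}. By the reformulation above this intersection is exactly $\mathbf{0}^* d_\beta(X_a)$: the validity automaton guarantees that the accepted words are genuine (possibly zero-padded) synchronized expansions of actual pairs $(x,y)\in\mathbb{R}^2$, while $\mathcal{B}_a$ enforces that $y$ is a power of $\beta$ and that the digit of $x$ at that power is $a$. Applying Fact~\ref{fact: 3 ways to recognize} then yields that $X_a$ is $\beta$-recognizable. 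I expect the only delicate point to be the verification that the two clauses of the predicate collapse to the single digit condition once the padding convention is taken into account; everything else is a routine automaton construction together with the closure properties already recorded.
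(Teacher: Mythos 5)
Your proposal is correct and follows essentially the same route as the paper: the paper also recognizes $X_a$ by intersecting the automaton for $d_\beta(\mathbb{R}^2)$ with a small B\"uchi automaton that enforces a single digit $1$ on the $y$-track together with the letter $a$ on the $x$-track at that same position. The only cosmetic difference is that the paper's auxiliary automaton has four states and tracks the occurrence of $(\star,\star)$ explicitly, whereas your two-state automaton lets the validity automaton enforce the single-$\star$ requirement; your explicit justification that the two clauses of the predicate collapse to one digit condition under the padding convention is a detail the paper leaves implicit.
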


\begin{proof}
Indeed, $d_\beta(X_a)$ is accepted by the intersection of 
the automaton recognizing $d_\beta(\mathbb{R}^2)$ with the one represented in Figure~\ref{figure: X_a}.
\begin{figure}[h!tbp]
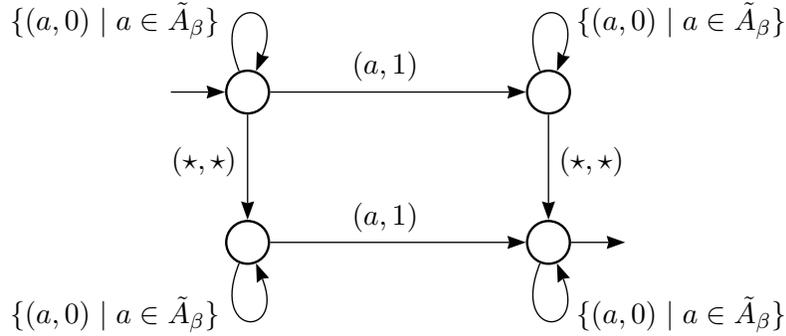

\centering
\begin{VCPicture}{(0,-0.5)(4,4.5)}
\ChgStateLineWidth{.5}
\ChgEdgeLineWidth{.5}
\ChgEdgeLabelScale{.6}
\SmallState\State[]{(0,3)}{A}
\SmallState\State[]{(4,3)}{B}
\SmallState\State[]{(0,1)}{C}
\SmallState\State[]{(4,1)}{D}
\Initial{A}
\Final{D}
\EdgeL{A}{B}{(a,1)}
\EdgeL{C}{D}{(a,1)}
\EdgeR{A}{C}{(\star,\star)}
\EdgeL{B}{D}{(\star,\star)}
\CLoopN{A}{\{(a,0)\mid a\in \tilde{A}_\beta\}}
\CLoopN[.75]{B}{\{(a,0)\mid a\in \tilde{A}_\beta\}}
\CLoopS{C}{\{(a,0)\mid a\in \tilde{A}_\beta\}}
\CLoopS[.75]{D}{\{(a,0)\mid a\in \tilde{A}_\beta\}}
\end{VCPicture}
\caption{Automaton for $X_a$.}
\label{figure: X_a}
\end{figure}
\end{proof}

We are now ready to prove Theorem~\ref{thm: definissable implique reconnaissable}.

\begin{proof}[Proof of Theorem~\ref{thm: definissable implique reconnaissable}]
We follow the lines of~\cite{BHMV92} by proving the result 
by induction on the length of the formula. If $\phi(x_1,\dots,x_n)$ is a formula written in the structure 
$\langle \mathbb{R}, 1, \le, +,  X_\beta\rangle$, 
we let $X_\phi$ denote the set $\{(x_1,\dots,x_n) \in \mathbb{R}^n \mid \phi(x_1,\dots,x_n) \text{ is true}\}$ 
and we let $\mathcal{A}_\phi$ denote a B\"uchi automaton (when it exists) accepting $d_\beta(X_\phi)$.

The existence of $\mathcal{A}_\phi$ for atomic formulae $\phi$ 
comes from Lemma~\ref{lemma: R^n recognizable}, Lemma~\ref{lemma: Z^n rec}, 
Corollary~\ref{corollary: + rec} and Lemma~\ref{lemma: X_beta rec}.
We have to prove that $\beta$-recognizability is preserved under disjunction, negation and existence, i.e., 
we prove that if $X_\phi$ and $X_\psi$ are $\beta$-recognizable, 
then so are $X_{\phi \vee \psi}$, $X_{\neg \phi}$ and $X_{\exists x \phi}$. 
As a consequence, we obtain that $X_{\phi \wedge \psi} = X_{\neg (\neg \phi \vee \neg \psi)}$, 
$X_{\phi \implies\quad \psi} = X_{(\neg \phi) \vee (\phi \wedge \psi)}$ 
and $X_{\forall x \phi} = X_{\neg(\exists x \neg \phi)}$ are also $\beta$-recognizable. 
This is a consequence of Fact~\ref{fact: buchi stable} and Lemma~\ref{lemma: R^n recognizable}:
\begin{enumerate}
	\item Given a formula $\varphi(x_1,\dots,x_n,y_1,\dots,y_m,z_1,\dots,z_\ell)$ defined by 
	\[
	\phi(x_1,\dots,x_n,y_1,\dots,y_m) \vee \psi(y_1,\dots,y_m,z_1,\dots,z_\ell).
	\]
	The language $d_\beta(X_{\varphi})$ is equal to 
	$(d_\beta(X_{\phi}) \times d_\beta(\mathbb{R}^\ell)) \cap (d_\beta(\mathbb{R}^n) \times d_\beta(X_\psi))$, 
	which is $\omega$-regular if so are $d_\beta(X_{\phi})$ and $d_\beta(X_{\psi})$. 

	\item Given a formula $\phi(x_1,\dots,x_n)$, the language $d_\beta(X_{\neg \phi})$ 
	is equal to $d_\beta(\mathbb{R}^n) \setminus d_\beta(X_{\phi})$, 
	which is $\omega$-regular if so is $d_\beta(X_{\phi})$.

	\item Given a formula $\phi(x,x_1,\dots,x_n)$, the language $d_\beta(X_{\exists x \phi})$ 
	is equal to $(\pi_{[2,n+1]}(d_\beta(X_{\phi}))$, 
	where $\pi_{[2,n+1]}$ is the projection over the last $n$ components, 
	and $(\pi_{[2,n+1]}(d_\beta(X_{\phi}))$ is $\omega$-regular if so is $d_\beta(X_{\phi})$.
\end{enumerate}
\end{proof}

\section{Towards a Cobham-like theorem for $\beta$-numeration systems}
\label{section: gdifs}

The famous theorem of Cobham from 1969 states that recognizability in integer bases of sets 
of integers strongly depends on the chosen base $b$ \cite{Cobham:1969}. 
If $b$ and $b'$ are such that $\log(b)/\log(b')$ is irrational, 
then the only sets $X \subset \mathbb{N}$ that are simultaneously $b$- and $b'$-recognizable 
are the finite unions of arithmetic progressions. 
In~\cite{Boigelot&Brusten&Bruyere:2008,Boigelot&Brusten&Leroux:2009}, 
the authors studied particular $\beta$-recognizable sets of real numbers 
and obtained a Cobham-like theorem about these sets. 
The authors of~\cite{Adamczewski&Bell:2011} independently obtained almost the same result with other techniques. 
By means of graph directed iterated function systems, we provide here a translation between the two results. 
In doing so we also answer a conjecture of~\cite{Adamczewski&Bell:2011} and improve a result of~\cite{Feng&Wang:2009}. 
Prior to this, we prove that when $\beta$ is a Pisot number, the sets $X \subset \mathbb{R}^n$ 
that are $\beta$-recognizable or $\beta^k$-recognizable are the same.

\subsection{Taking powers of the base does not change recognizability}

\begin{prop}
Let $\beta$ be a Pisot number. For all positive integers $k$, 
a set $X \subset \mathbb{R}^n$ is $\beta$-recognizable if and only if it is $\beta^k$-recognizable.  
\end{prop}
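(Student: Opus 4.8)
The plan is to work directly with the languages of representations and to prove that $d_\beta(X)$ is $\omega$-regular if and only if $d_{\beta^k}(X)$ is. Two preliminary observations guide everything. First, since $\beta$ is Pisot so is $\beta^k$ (its conjugates are among the $k$-th powers of those of $\beta$, hence of modulus $<1$, while $\beta^k>1$ is an algebraic integer); in particular $\beta^k$ is a Parry number, so $\mathbb{R}^n$ is $\beta^k$-recognizable by Lemma~\ref{lemma: R^n recognizable}. Second, the classical integer-base idea of grouping base-$\beta$ digits into blocks of length $k$ does \emph{not} produce base-$\beta^k$ digits, since such a block $\sum_{j=0}^{k-1}a_j\beta^j$ lies in $\mathbb{Z}[\beta]$ and is generally not an integer. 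To get around this, I would read a $\beta^k$-expansion $\cdots\mathbf{y}_1\mathbf{y}_0\boldsymbol{\star}\mathbf{y}_{-1}\cdots$ as a base-$\beta$ representation over the \emph{integer} alphabet $C:=\tilde{A}_{\beta^k}$ by inserting $k-1$ copies of $\mathbf{0}$ at the appropriate places, so that the digit $\mathbf{y}_i$ sits at base-$\beta$ position $\beta^{ki}$. Call this regular, injective map $\iota$; its key property is $\val_\beta\circ\iota=\val_{\beta^k}$, and both $\iota$ and $\iota^{-1}$ preserve $\omega$-regularity.

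For the implication $\beta^k$-recognizable $\Rightarrow$ $\beta$-recognizable, I would assume $d_{\beta^k}(X)$ is $\omega$-regular. Then $L_k:=\iota(d_{\beta^k}(X))$ is $\omega$-regular, and its words are base-$\beta$ representations over $C$ of the points of $X$. Since $\beta$ is Pisot and $C\subset\mathbb{Z}$, Frougny's normalization transducer $\mathcal{T}_{\beta,C}$ of Proposition~\ref{proposition: normalization} applies, and by Lemma~\ref{fact: transducer conserve la regularite} the normalized language $\mathcal{T}_{\beta,C}(L_k)=\mathbf{0}^*d_\beta(X)$ is $\omega$-regular; hence $X$ is $\beta$-recognizable by Fact~\ref{fact: 3 ways to recognize}.

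For the converse I would assume $d_\beta(X)$ is $\omega$-regular and reconstruct $L_k$ from it. The point is that $L_k$ is carved out of the set of \emph{all} base-$\beta$ representations over $C$ of points of $X$ by the regular requirement of being a zero-inserted greedy $\beta^k$-expansion. Let $R$ be the relation realized by $\mathcal{T}_{\beta,C}$, which is $\omega$-regular by Lemma~\ref{fact: transducer conserve la regularite}, write $\Sigma$ for the alphabet of base-$\beta$ words over $C$, and set
\[
P:=\pi_1\big(R\cap(\Sigma^\omega\times\mathbf{0}^*d_\beta(X))\big).
\]
By the closure properties in Fact~\ref{fact: buchi stable}, $P$ is $\omega$-regular, and because each word has a unique normalization it consists exactly of the base-$\beta$ representations over $C$ whose value lies in $X$. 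Let $G:=\iota(d_{\beta^k}(\mathbb{R}^n))$ be the ($\omega$-regular, by Lemma~\ref{lemma: R^n recognizable}) set of zero-inserted $\beta^k$-expansions. Using $\val_\beta\circ\iota=\val_{\beta^k}$ together with the uniqueness of greedy expansions, one checks $P\cap G=\iota(d_{\beta^k}(X))=L_k$, so $L_k$ is $\omega$-regular; applying $\iota^{-1}$ gives that $d_{\beta^k}(X)$ is $\omega$-regular, i.e.\ $X$ is $\beta^k$-recognizable.

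The whole difficulty sits in this converse, and it is conceptual rather than computational: because block-grouping leaves $\mathbb{Z}$, there is no length-$k$ block transducer turning $d_\beta(X)$ into $d_{\beta^k}(X)$ as in the integer-base Cobham setting. The replacement is to stay in base $\beta$ over the enlarged integer alphabet $\tilde{A}_{\beta^k}$ and to isolate the right representations by intersecting the normalization preimage with the $\omega$-regular language of zero-inserted $\beta^k$-expansions; this is exactly where $\beta^k$ being Parry (so that $G$ is regular) and $\beta$ being Pisot (so that $\mathcal{T}_{\beta,C}$ exists) are both used. The one verification needing care is the value-matching $\val_\beta\circ\iota=\val_{\beta^k}$, which makes ``representing a point of $X$'' mean the same thing in both bases and yields $P\cap G=L_k$. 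Finally, since $\iota$, normalization, and all the intersections act componentwise on digit tuples, the argument is insensitive to the dimension $n$, so it suffices to carry it out as above.
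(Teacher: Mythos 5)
Your proof is correct, and its skeleton is the same as the paper's: your zero-insertion map $\iota$ is exactly the paper's morphism $\zeta_k$ (defined by $\zeta_k(i)=0^{k-1}i$ for $i\in\tilde{A}_{\beta^k}$ and $\zeta_k(\star)=\star$), and your forward direction ($\beta^k$-recognizable $\Rightarrow$ $\beta$-recognizable) is essentially verbatim the paper's argument via the transducer $\mathcal{T}_{\beta,\tilde{A}_{\beta^k}}$ of Proposition~\ref{proposition: normalization}. The one genuine divergence is in the converse. The paper intersects the normalization relation with $L_{\zeta_k}\times 0^*d_\beta(X)$, where $L_{\zeta_k}=(0^{k-1}\tilde{A}_{\beta^k})^+\star(0^{k-1}\tilde{A}_{\beta^k})^\omega$ is the set of \emph{all} zero-inserted words, greedy or not; the projected language $L_X$ therefore contains arbitrary $\beta^k$-representations of points of $X$, and the paper must finish with a \emph{second} normalization, in base $\beta^k$, via the transducer $\mathcal{T}_{\beta^k,\tilde{A}_{\beta^k}}$, to obtain $d_{\beta^k}(X)$. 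You instead intersect with $G=\iota(d_{\beta^k}(\mathbb{R}^n))$, i.e.\ with zero-inserted \emph{greedy} expansions only, so that $P\cap G$ is directly $\iota(d_{\beta^k}(X))$; the uniqueness clause in Proposition~\ref{proposition: normalization} (each word over $C$ has a unique normalization, and distinct reals have distinct expansions) gives both inclusions, as you indicate. This buys a small economy: Frougny's Pisot-based transducer is invoked only once, for base $\beta$, while for base $\beta^k$ you only need the Parry property, through the regularity of $d_{\beta^k}(\mathbb{R}^n)$ (Lemma~\ref{lemma: R^n recognizable}); the paper's version is more mechanical but avoids arguing the set equality $P\cap G=L_k$. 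One minor imprecision on your side: $\mathcal{T}_{\beta,C}(L_k)$ is not all of $\mathbf{0}^*d_\beta(X)$ but a language of the form $\{\mathbf{0}^{m(\mathbf{x})}d_\beta(\mathbf{x})\mid\mathbf{x}\in X\}$, one padding per element; this is harmless precisely because item (3) of Fact~\ref{fact: 3 ways to recognize}, which you cite, covers languages of that form.
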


\begin{proof}
Let $k$ be an integer greater than 1 (the case $k=1$ is obvious). 
The number $\beta$ being a Pisot number, $\beta^k$ is also Pisot~\cite{Pisot:1946}. 
We define the morphism $\zeta_k$ by $\zeta_k(i) = 0^{k-1} i$ for all $i \in  \tilde{A}_{\beta^k}$ and $\zeta_k(\star)=\star$. 
To alleviate the proof, we only prove the result for $n=1$. 
The general case can be handled by considering multidimensional versions of the transducers we use.

If $X \subset \mathbb{R}$ is $\beta^k$-recognizable, 
the language $d_{\beta^k}(X)$ is $\omega$-regular 
so the language $\zeta_k\left(d_{\beta^k}(X)\right)$ is also $\omega$-regular, see Fact~\ref{fact: buchi stable}. 
Moreover, any element $u$ in $\zeta_k\left(d_{\beta^k}(X)\right)$ is a $\beta$-representation of an element of $X$: 
there exists $x \in X$ such that $\val_\beta(u)=x$. 
More precisely, if $x\in d_{\beta^k}(X)$, then $\val_\beta(\zeta_k(x))=\val_{\beta^k}(x)$.
If $\mathcal{T}_{\beta,\tilde{A}_{\beta^k}}$ is the transducer of Proposition~\ref{proposition: normalization}, 
the language $\mathcal{T}_{\beta,\tilde{A}_{\beta^k}} \left( \zeta_k\left(d_{\beta^k}(X)\right) \right)$ is $\omega$-regular 
so $X$ is $\beta$-recognizable.

Now suppose that $X \subset \mathbb{R}$ is $\beta$-recognizable 
and let $\mathcal{A}_X$ denote the B\"uchi automaton accepting $d_{\beta}(X)$. 
Let $R_\beta \subset (\tilde{A}_{\beta^k}^+ \star \tilde{A}_{\beta^k}^\omega) 
\times (\tilde{A}_{\beta}^+ \star \tilde{A}_{\beta}^\omega)$ 
be the relation defined by $\mathcal{T}_{\beta,\tilde{A}_{\beta^k}}$.
It is $\omega$-regular by Lemma~\ref{fact: transducer conserve la regularite}.
Let $L_{\zeta_k}$ be the $\omega$-regular language 
$(0^{k-1} \tilde{A}_{\beta^k})^+ \star (0^{k-1} \tilde{A}_{\beta^k})^\omega$ 
accepted by the B\"uchi automaton $\mathcal{A}_{\zeta_k}$ depicted in Figure~\ref{figure: zeta^k}. 
Since $d_{\beta}(X)$ is $\omega$-regular, if $\pi_1$ is the projection on the first component, 
the language $L_X := \pi_1\left(R_{\beta} \cap (L_{\zeta_k} \times 0^* d_{\beta}(X))\right) 
\subset \tilde{A}_{\beta^k}^+\star \tilde{A}_{\beta^k}^\omega$ is $\omega$-regular. 
Furthermore, due to the intersection with $R_{\beta}$, any word $u$ in $L_X$ is such that $\val_\beta(u) \in X$ 
and, conversely, for all $x \in X$, the word $\zeta_k(d_{\beta^k}(x))$ belongs to 
$(0^{k-1} \tilde{A}_{\beta^k})^+ \star (0^{k-1} \tilde{A}_{\beta^k})^\omega$ 
and is such that $\left(\zeta_k(d_{\beta^k}(x)),0^{m(x)}d_{\beta}(x)\right)$ belongs to $R_\beta$ for some integer $m(x)$.
Thus we have $\val_{\beta}(L_X) = X$. 

Let $M_X:=\zeta_k^{-1}(L_X)$. 
This language is $\omega$-regular by \cite[Proposition 5.5]{PerrinPin04} (also see Fact~\ref{fact: buchi stable})
and $\val_{\beta^k}(M_X) = X$. 
Thanks to the transducer $\mathcal{T}_{\beta^k,\tilde{A}_{\beta^k}}$, the language $d_{\beta^k}(X)$ is $\omega$-regular.

\begin{figure}[h!tbp]
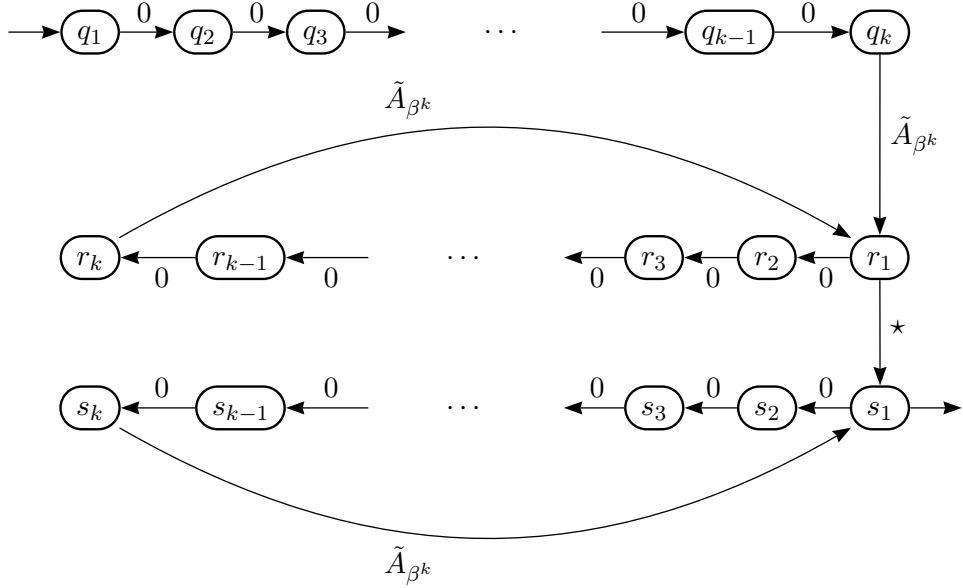

\centering
\begin{VCPicture}{(0,0)(10.5,8)}
\ChgStateLineWidth{.5}
\ChgEdgeLineWidth{.5}
\ChgEdgeLabelScale{.6}
\ChgStateLabelScale{.6}
\SmallState\StateVar[q_1]{(0,7)}{1}
\SmallState\StateVar[q_2]{(1.5,7)}{2}
\SmallState\StateVar[q_3]{(3,7)}{3}
\SmallState\StateVar[q_{k-1}]{(8.5,7)}{6}
\SmallState\StateVar[q_k]{(10.5,7)}{7}

\SmallState\StateVar[r_1]{(10.5,4)}{111}
\SmallState\StateVar[r_2]{(9,4)}{222}
\SmallState\StateVar[r_3]{(7.5,4)}{333}
\SmallState\StateVar[r_{k-1}]{(2,4)}{666}
\SmallState\StateVar[r_k]{(0,4)}{777}

\SmallState\StateVar[s_1]{(10.5,2)}{11}
\SmallState\StateVar[s_2]{(9,2)}{22}
\SmallState\StateVar[s_3]{(7.5,2)}{33}
\SmallState\StateVar[s_{k-1}]{(2,2)}{66}
\SmallState\StateVar[s_k]{(0,2)}{77}

\ChgStateLineColor{white}
\SmallState\StateVar[]{(4.5,7)}{4}
\SmallState\StateVar[]{(6.5,7)}{5}
\SmallState\StateVar[\cdots]{(5.5,7)}{4.5}

\SmallState\StateVar[]{(6,4)}{444}
\SmallState\StateVar[]{(4,4)}{555}
\SmallState\StateVar[\cdots]{(5,4)}{4.555}

\SmallState\StateVar[]{(6,2)}{44}
\SmallState\StateVar[]{(4,2)}{55}
\SmallState\StateVar[\cdots]{(5,2)}{4.55}

\Initial{1}
\Final{11}

\EdgeL{1}{2}{0}
\EdgeL{2}{3}{0}
\EdgeL{3}{4}{0}
\EdgeL{5}{6}{0}
\EdgeL{6}{7}{0}

\EdgeL{111}{222}{0}
\EdgeL{222}{333}{0}
\EdgeL{333}{444}{0}
\EdgeL{555}{666}{0}
\EdgeL{666}{777}{0}
\LArcL{777}{111}{\tilde{A}_{\beta^k}}

\EdgeR{11}{22}{0}
\EdgeR{22}{33}{0}
\EdgeR{33}{44}{0}
\EdgeR{55}{66}{0}
\EdgeR{66}{77}{0}
\LArcR{77}{11}{\tilde{A}_{\beta^k}}

\EdgeL{7}{111}{\tilde{A}_{\beta^k}}
\EdgeL{111}{11}{\star}

\end{VCPicture}
\caption{Automaton for $L_{\zeta_k}$}
\label{figure: zeta^k}
\end{figure}
\end{proof}

\subsection{Background on Cobham theorem for real numbers in integer bases}

\begin{definition}
Two real numbers $\beta$ and $\gamma$ greater than $1$ are {\em multiplicatively independent} 
if $\frac{\log \beta}{\log \gamma} \notin \mathbb{Q}$.
\end{definition}

\begin{definition}
Let $b \geq 2$ be an integer. A compact set $X \subset [0,1]^n$ is {\em $b$-self-similar} 
if its $b$-kernel is finite where the {\em $b$-kernel of X} is the collection of sets 
\[
	\left\{ (b^k X - \mathbf{a}) \cap [0,1]^n \mid  k \geq 0, \mathbf{a} = (a_1,\dots,a_n) \in \mathbb{Z}^n \text{ and  } (\forall i)\, 0 
	\leq a_i < b^k  \right\}.
\]
\end{definition}

Examples of $b$-self-similar sets are the Pascal's triangle modulo 2 (it is 2-self-similar) that consists in the adherence of the set
\[
	\left\{ \left( \val_2(0\star \rep_2(n)),\val_2(0\star \rep_2(m)) \right)	\mid 
	\left( 
	\begin{array}{c}
	m	\\	n
	\end{array}
	\right)	\equiv 1  \mod 2	\right\}
\]
depicted in Figure~\ref{figure:pascal} and the Menger sponge (it is 3-self-similar) 
that consists in the adherence of the set of points $\mathbf{x} \in [0,1]^3$ 
such that $\rep_3(\mathbf{x})$ does not contain occurrences\footnote{Each face 
of the Menger sponge is a Sierpinski carpet.} of digits in $\{(0,1,1),(1,0,1),(1,1,0),(1,1,1)\}$; 
it is depicted in Figure~\ref{figure:sponge}.

\begin{figure}[h!tbp]
\centering
\includegraphics[scale=0.5]{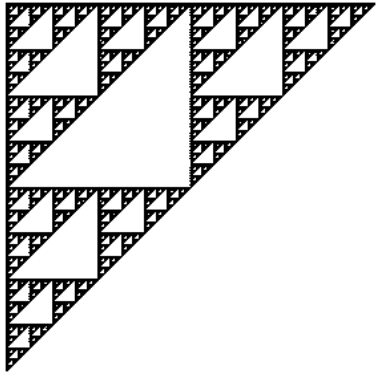}
\caption{Pascal's triangle modulo 2 is 2-self-similar.}
\label{figure:pascal}
\end{figure}

\begin{figure}[h!tbp]
\centering
\includegraphics[scale=0.5]{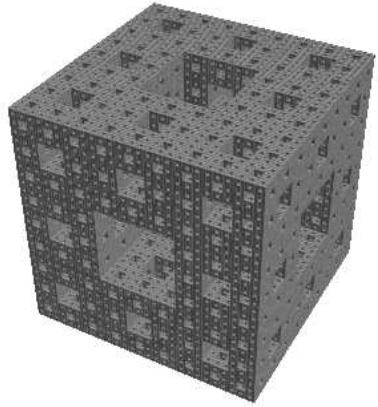}
\caption{Menger sponge is 3-self-similar.}
\label{figure:sponge}
\end{figure}

\begin{theorem}[Adamczewski and Bell~\cite{Adamczewski&Bell:2011}]
Let $b$ and $b'$ be two multiplicatively independent integers greater than 1. 
A compact set $X \subset [0,1]$ is simultaneously $b$- and $b'$-self-similar 
if and only if it is a finite union of closed intervals with rational endpoints.
\end{theorem}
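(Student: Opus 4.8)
The plan is to obtain this statement by combining the chain of equivalences established in this section with the theorem of Boigelot \emph{et al.} quoted in the introduction, so that the genuinely new content (the bridge through graph directed iterated function systems) is reused rather than redone. I would treat the two implications separately, starting with the easy ``if'' direction. Suppose $X$ is a finite union of closed intervals with rational endpoints, and write all endpoints over a common denominator $q$, so that they lie in $\frac{1}{q}\mathbb{Z}\cap[0,1]$. A direct computation shows that each element $(b^k X-a)\cap[0,1]$ of the $b$-kernel is again a finite union of closed intervals whose endpoints lie in $\frac{1}{q}\mathbb{Z}\cap[0,1]$, since $b^k\,\frac{p}{q}-a=\frac{b^k p-aq}{q}$. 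As $\frac{1}{q}\mathbb{Z}\cap[0,1]$ is finite, there are only finitely many such unions, so the $b$-kernel is finite and $X$ is $b$-self-similar; the same argument with $b'$ gives $b'$-self-similarity.

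For the ``only if'' direction I would invoke the GDIFS bridge. Assume $X\subset[0,1]$ is compact and simultaneously $b$- and $b'$-self-similar, with $b,b'$ multiplicatively independent. Since integer bases are a special case of the real bases treated here, Theorems~\ref{thm: digraph implique self-similar}, \ref{thm:self-similar implique digraph} and~\ref{thm:digraph automate} apply: being $b$-self-similar makes $X$ a particular weakly $b$-recognizable set, and likewise $X$ is weakly $b'$-recognizable. Because $\frac{\log b}{\log b'}\notin\mathbb{Q}$, the theorem of Boigelot \emph{et al.} then guarantees that $X$ is definable by a first order formula in the structure $\langle\mathbb{R},\mathbb{Z},+,\le\rangle$.

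It remains to translate this logical description into the desired geometric one. As recalled just after the statement of the theorem of Boigelot \emph{et al.}, the subsets of $\mathbb{R}$ definable in $\langle\mathbb{R},\mathbb{Z},+,\le\rangle$ are exactly the periodic repetitions of finite unions of intervals with rational endpoints. Here I would exploit the two standing hypotheses: boundedness ($X\subset[0,1]$) kills any nontrivial periodic part, since a nonempty set invariant under a nonzero translation is unbounded, leaving a finite union of rational-endpoint intervals; and closedness of the compact set $X$ forces these intervals to be closed. This yields the conclusion.

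The step I expect to be the main obstacle is the passage from self-similarity to weak recognizability, that is, the content of the GDIFS equivalences of this section; once those are available the remainder is a clean concatenation of known results. A secondary point requiring care is the final paragraph: one must check that the periodic-repetition normal form for one-dimensional $\langle\mathbb{R},\mathbb{Z},+,\le\rangle$-definable sets really does collapse, under compactness inside $[0,1]$, to a \emph{finite} union of \emph{closed} rational-endpoint intervals, ruling out the half-open pieces or isolated rational points that the normal form might \emph{a priori} permit.
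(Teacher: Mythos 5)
Your proposal is correct and follows essentially the same route as the paper: the paper establishes the $n$-dimensional Adamczewski--Bell conjecture (of which this statement is the case $n=1$) by exactly your chain — Theorem~\ref{thm:self-similar implique digraph}, then Theorem~\ref{thm:digraph automate}, then Remark~\ref{rem:closedweak} to pass to weak automata, then Theorem~\ref{thm cobham version weak buchi} of Boigelot \emph{et al.}, and finally the identification of $\langle\mathbb{R},\mathbb{Z},+,\le\rangle$-definable sets with finite unions of rational polyhedra. The only cosmetic differences are that the paper dismisses the ``if'' direction as clear (you spell it out), and that it obtains the final geometric description via quantifier elimination (Ferrante--Rackoff) rather than via the periodic-repetition normal form combined with your compactness/closedness argument, which is a slightly more careful handling of the same step.
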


\begin{conjecture}[Adamczewski and Bell~\cite{Adamczewski&Bell:2011}]
\label{conjecture:AB}
Let $b$ and $b'$ be two multiplicatively independent integers greater than 1. 
A compact set $X \subset [0,1]^n$ is simultaneously $b$- and $b'$-self-similar 
if and only if it is a finite union of polyhedra whose vertices have rational coordinates.
\end{conjecture}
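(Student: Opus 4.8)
The plan is to reduce both implications of Conjecture~\ref{conjecture:AB} to the theorem of Boigelot \emph{et al.}, using as a bridge the equivalences between $\beta$-self-similar sets, attractors of particular GDIFS, and particular weakly $\beta$-recognizable sets established in Section~\ref{section: gdifs} (Theorems~\ref{thm:digraph automate}, \ref{thm: digraph implique self-similar} and~\ref{thm:self-similar implique digraph}). Since an integer $b\ge 2$ is in particular a Pisot number (its minimal polynomial $x-b$ has no other root), all the machinery of the previous sections specializes to this setting and may be used freely.

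For the direct implication, I would start from a compact set $X\subset[0,1]^n$ that is simultaneously $b$- and $b'$-self-similar with $b,b'$ multiplicatively independent. First I would invoke the GDIFS equivalences to turn each self-similarity hypothesis into weak recognizability, obtaining that $X$ is both weakly $b$- and weakly $b'$-recognizable. Since $\log b/\log b'\notin\mathbb{Q}$, I would then apply the theorem of Boigelot, Brusten, Bruyère, Jodogne, Leroux and Wolper to conclude that $X$ is definable by a first order formula in $\langle\mathbb{R},\mathbb{Z},+,\le\rangle$. Finally, using the structural description recalled just after that theorem (such definable sets are the periodic repetitions of finite unions of polyhedra with rational vertices) together with the fact that $X$ is bounded inside the compact cube $[0,1]^n$, I would argue that the periodic part collapses: only finitely many integer translates of the building polyhedra can meet $[0,1]^n$, so $X$ is a genuine finite union of polyhedra with rational vertices, which compactness lets us take closed.

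For the converse, I would show that a finite union of closed polyhedra with rational vertices contained in $[0,1]^n$ is $b$-self-similar for \emph{every} integer base $b\ge 2$ (so in particular for $b$ and $b'$, with no recourse to multiplicative independence). The cleanest route I see is to bound the $b$-kernel directly. Fixing a common denominator $D$ for the finitely many rational hyperplanes bounding the pieces of $X$, the affine maps $\mathbf{x}\mapsto b^k\mathbf{x}-\mathbf{a}$ leave the normal directions of these hyperplanes unchanged and only shift their constant terms within $\tfrac1D\mathbb{Z}$, while intersecting with $[0,1]^n$ only adds the fixed cube faces $x_i=0,1$. A Cramer's rule estimate then confines every vertex of every set $(b^kX-\mathbf{a})\cap[0,1]^n$ to a fixed finite set $\tfrac1{D'}\mathbb{Z}^n\cap[0,1]^n$, leaving only finitely many possible polyhedra and hence finitely many finite unions, so the $b$-kernel is finite. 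Alternatively, one may note that $X$ is definable in $\langle\mathbb{R},\mathbb{Z},+,\le\rangle$ and run the previous chain backwards through the easy direction of the Boigelot \emph{et al.} theorem and the GDIFS equivalences.

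The main obstacle, as I see it, is not in the conjecture's proof \emph{per se} but in the ingredient on which it rests: the passage from $b$-self-similarity to \emph{weak} $b$-recognizability. The theorem of Boigelot \emph{et al.} is specific to weak Büchi automata, so the delicate point is to verify that the automaton produced from the particular GDIFS attached to a self-similar set indeed lies in this weaker class, and conversely that the polyhedral sets land back in exactly the right subclass; this matching is where the work of Section~\ref{section: gdifs} is concentrated. Once those equivalences are granted, the conjecture follows by the short reductions above, the multiplicative independence of $b$ and $b'$ intervening only to trigger the dichotomy in the Boigelot \emph{et al.} theorem.
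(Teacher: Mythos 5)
Your proposal is correct and follows essentially the same route as the paper: the paper also proves the hard direction by applying Theorem~\ref{thm:self-similar implique digraph}, then Theorem~\ref{thm:digraph automate} to get a closed trim B\"uchi automaton, then Remark~\ref{rem:closedweak} to see it is weak, then Theorem~\ref{thm cobham version weak buchi} to get definability in $\langle\mathbb{R},\mathbb{Z},+,\le\rangle$, and finally identifies bounded definable sets with finite unions of rational polyhedra (via quantifier elimination, where you instead collapse the periodic description using compactness --- an equally valid variant). For the converse, the paper simply asserts that rational polyhedral unions are $b$-self-similar for every integer $b\ge 2$; your direct kernel-counting argument correctly fills in exactly this step.
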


The aim of this section is to prove that this conjecture holds true: 
this is a consequence of Theorem~\ref{thm cobham version weak buchi}, 
Theorem~\ref{thm:digraph automate}, Theorem~\ref{thm: digraph implique self-similar} 
and Theorem~\ref{thm:self-similar implique digraph} below.

\begin{definition}
A B\"uchi automaton $\mathcal{A} = (Q,A,E,I,T)$ is said to be {\em weak} 
if all its strongly connected components are subsets either of $T$, or of $Q \setminus T$. 
A set $X$ such that $d_\beta(X)$ is accepted by a weak automaton is said to by {\em weakly $\beta$-recognizable}.
\end{definition}

The use of weak automata finds its motivation in~\cite{Boigelot&Jodogne&Wolper:2001} 
where the authors prove that any set definable by a first order formula 
in the structure $\langle \mathbb{R}, \mathbb{Z}, +, \leq \rangle$ 
is weakly $b$-recognizable for any integer base $b \geq 2$. 
Moreover, the behaviour of such automata are comparable with finite automata: 
minimal weak Büchi automata can be defined, the class of weak Büchi automata is closed under complementation, etc.
The next result shows that the converse also holds true.

\begin{theorem}[Boigelot, Brusten, Bruyère, Jodogne, Leroux 
and Wolper~\cite{Boigelot&Jodogne&Wolper:2001,Boigelot&Brusten&Bruyere:2008,Boigelot&Brusten&Leroux:2009}]
\label{thm cobham version weak buchi}
Let $b$ and $b'$ be two multiplicatively independent integers greater than 1. 
A set $X \subset \mathbb{R}^n$ is simultaneously weakly $b$- and $b'$-recognizable 
if and only if it is definable by a first order formula in the structure $\langle \mathbb{R}, \mathbb{Z}, +, \leq \rangle$. 
\end{theorem}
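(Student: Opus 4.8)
The plan is to prove the two implications separately, as they are of entirely different natures; the forward implication carries essentially all the difficulty and is the genuine Cobham phenomenon.

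For the direction asserting that a set definable in $\langle \mathbb{R}, \mathbb{Z}, +, \le \rangle$ is weakly $b$-recognizable for \emph{every} integer base $b \geq 2$ (and hence in particular for both $b$ and $b'$), I would argue by structural induction on the defining first-order formula, exactly in the spirit of the proof of Theorem~\ref{thm: definissable implique reconnaissable}. First one checks that the sets attached to the atomic formulae of the structure --- the lattice $\mathbb{Z}^n$, the graph of addition, and the order --- are weakly $b$-recognizable, each being accepted by a small automaton all of whose strongly connected components are uniformly accepting or uniformly rejecting. One then invokes the closure properties of the class of weakly recognizable sets: unlike general Büchi automata, weak automata are (effectively) closed under complementation, and the class is closed under finite union, finite intersection and projection. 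These three closures account for $\neg$, $\vee$ and $\exists$, so the induction goes through uniformly in the base. The only genuinely delicate closure here is under projection (for $\exists$), where weakness must be recovered after an existential quantification; this is precisely the content established in the work cited for the statement.

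For the converse --- that a set simultaneously weakly $b$- and weakly $b'$-recognizable with $\frac{\log b}{\log b'} \notin \mathbb{Q}$ is definable in the base-independent structure $\langle \mathbb{R}, \mathbb{Z}, +, \le \rangle$ --- I would proceed by reduction to the classical Cobham--Semenov theorem on sets of integers. The strategy has two movements. First, reduce the geometry: working locally and on bounded regions, one shows that the topological boundary $\partial X$ is again simultaneously weakly $b$- and $b'$-recognizable and of strictly smaller dimension, so that an induction on $n$ reduces the problem to understanding how $X$ is assembled from finitely many pieces bounded by such lower-dimensional recognizable sets. Second, extract discrete data: from the (weak, essentially deterministic) automata for $X$ in the two bases one reads off combinatorial invariants --- the finitely many states, the slopes and offsets governing the affine pieces, and the lattice data describing where the pieces meet --- and these invariants form sets of integers that are simultaneously $b$- and $b'$-recognizable. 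At this point the multiplicative independence of $b$ and $b'$ is brought to bear: on the discrete side it feeds directly into Cobham's theorem, forcing the extracted integer sets to be finite unions of arithmetic progressions, hence Presburger-definable without reference to any base; on the continuous side the hypothesis $\frac{\log b}{\log b'} \notin \mathbb{Q}$ underlies a density argument, since by Weyl equidistribution the ratios $b^{k} b'^{-m}$ are dense in the positive reals, so that the scales $b^{-k}$ and $b'^{-m}$ at which the two representations resolve $X$ interleave arbitrarily finely. Consequently no self-similar (fractal) structure can persist in $\partial X$ simultaneously in both bases, which forces the boundary pieces to be locally affine with rational data. Combining the two movements, one rebuilds $X$ as a finite Boolean combination of rational half-spaces and lattice conditions, i.e.\ as a set definable in $\langle \mathbb{R}, \mathbb{Z}, +, \le \rangle$.

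I expect this converse to be the main obstacle, and within it the hardest points are the dimensional induction on the boundary and the clean passage from the real setting to the integer Cobham theorem. Controlling the interaction between the integer part and the fractional part of the synchronized representations \emph{simultaneously} in two incommensurable bases, and proving that the boundary is a genuine finite union of rational polyhedra rather than merely a null set, are the technically heaviest steps; by contrast, once the discrete invariants are in hand, the appeal to Cobham--Semenov and the reassembly of $X$ are comparatively routine bookkeeping with automata and formulae.
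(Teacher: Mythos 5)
First, a point of reference: the paper does not prove this statement at all. It is quoted verbatim as a theorem of Boigelot, Brusten, Bruy\`ere, Jodogne, Leroux and Wolper, imported from the cited works, and then used as a black box (together with Theorems~\ref{thm:digraph automate}, \ref{thm: digraph implique self-similar} and~\ref{thm:self-similar implique digraph}) to settle the Adamczewski--Bell conjecture. So there is no internal proof to compare your attempt against; your proposal has to stand on its own as a proof of a deep external theorem, and it does not.

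In the easy direction your outline hides the one real difficulty: weak deterministic B\"uchi automata are \emph{not} closed under projection, so the inductive step for $\exists$ cannot be dispatched by ``closure properties'' the way the paper's own proof of Theorem~\ref{thm: definissable implique reconnaissable} handles ordinary B\"uchi automata. The cited work recovers weakness after projection by a topological characterization (the encodings of sets definable in $\langle \mathbb{R},\mathbb{Z},+,\le\rangle$ lie in a low level of the Borel hierarchy, where nondeterministic B\"uchi acceptance can be converted back to weak deterministic acceptance); you acknowledge the issue but simply defer it to the literature, which means that direction is not proved. The converse direction has a more serious, structural gap: your second ``movement'' extracts ``the slopes and offsets governing the affine pieces'' of $X$ --- but the existence of affine pieces is precisely the conclusion of the theorem, so the argument as written is circular. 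Likewise, the decisive claim that density of the ratios $b^k b'^{-m}$ (Weyl equidistribution) ``forces the boundary pieces to be locally affine with rational data'' is exactly the Cobham-type rigidity statement one must prove; asserting it is not a reduction but a restatement. The actual proofs in the cited papers proceed quite differently --- by analyzing how the states of a weak deterministic automaton reading the fractional part encode local neighborhoods of points of $X$, showing that multiplicative independence makes the local structure invariant under a dense semigroup of scalings, and only then invoking Cobham/Semenov for the integer-part component. Your sketch of a dimensional induction on $\partial X$ (recognizability and dimension drop of the boundary, and the reassembly of $X$ from boundary data) is also left entirely unsubstantiated. In short: a reasonable roadmap of what such a proof must accomplish, but the theorem's genuine content is assumed at the two places where it is needed.
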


\subsection{Background on graph directed iterated function systems}

By means of graph directed iterated function systems, we are able to determine 
the link between the notions of weakly $b$-recognizable and $b$-self-similar. 
As in the first part of the paper, we extend these notions to $\beta$-numeration systems for real numbers $\beta>1$.

\begin{definition}
Let $G = (V,E)$ be a directed graph. For all $u,v \in V$, we let $E_{uv}$ denote the set of edges from $u$ to $v$. 
An {\em iterated function system realizing $G$} is given by a collection of metric spaces $(X_v,\rho_v)$, $v \in V$, 
and of similarities\footnote{Recall that a {\em similarity} of ratio $r$ is a function $f:X_v \to X_u$ 
such that $\rho_u(f(\mathbf{x}), f(\mathbf{y})) = r\rho_v(\mathbf{x},\mathbf{y})$ for all $\mathbf{x},\mathbf{y} \in X_v$.} 
$S_e : X_v \to X_u$, $e \in E_{uv}$, of ratio $r_e$. 
An {\em attractor} (or {\em  invariant list}) for such an iterated function system 
is a list of nonempty compact sets $K_u \subset X_u$ such that for all $u \in V$,
\[
	K_u = \bigcup_{v \in V} \bigcup_{e \in E_{uv}} S_e(K_v).
\]
A {\em graph-directed iterated function system} ({\em GDIFS} for short) is given by a 4-tuple 
\[
	(V,E,(X_v,\rho_v)_{v \in V},(S_e)_{e \in E})
\]
with $(V,E)$ the underlying directed graph, $(X_v,\rho_v)_{v \in V}$ 
the collection of metric spaces and $(S_e)_{e \in E}$ the collection of similarities.  
\end{definition}

\begin{remark}
\label{rem: infinite path}
In the directed graph of a GDIFS, it is always assumed that each vertex $v$ has at least one edge starting from $v$. 
\end{remark}

\begin{example}[Rauzy fractal]
\label{ex:Rauzy fractal}
A classical example of attractor of GDIFS is the {\em Rauzy fractal} \cite{Rauzy:1982} 
represented in Figure~\ref{figure: Rauzy fractal}. It can obtained as follows. 
Let $\sigma$ be the tribonacci substitution defined over the alphabet 
$A=\{1,2,3\}$ by $\sigma(1) = 12$, $\sigma(2)=13$ and $\sigma(3)=1$. 
We define the abelianization map $P:A^+ \to \mathbb{R}^3$ by, 
for $u_0, \dots, u_n \in A$, $P(u_0 \cdots u_n) = \sum_{i=0}^n \mathbf{e}_{u_i}$ 
where $\{\mathbf{e}_1,\mathbf{e}_2,\mathbf{e}_3\}$ is the canonical basis of $\mathbb{R}^3$. 
The incidence matrix $M_\sigma$ of $\sigma$ is primitive and its dominating eigenvalue $\beta$ 
is a unit Pisot number (real root of $x^3-x^2-x-1$). 
Thus, the space $\mathbb{R}^3$ can be decomposed into the {\em expanding line} $\mathbb{H}_e$ 
(eigenspace of $M_\sigma$ associated with $\beta$) and the {\em contracting plane} $\mathbb{H}_c$ 
(eigenspace associated with the conjugates of $\beta$). 
Then, if $\sigma^\omega(1) = u_0 u_1 u_2 \cdots$, the sequence $(P(u_0 u_1 \cdots u_n))_{n \in \mathbb{N}}$ 
describes a broken line whose distance to $\mathbb{H}_e$ is bounded. 
If $\pi: \mathbb{R}^3 \to \mathbb{H}_c$ is the projection along $\mathbb{H}_e$, 
then the Rauzy fractal is defined by 
\[
	\mathcal{R} = \overline{\left\{ \pi\left( P(u_0 u_1 \cdots u_n) \right) \mid n \in \mathbb{N} \right\}}
\]
and its subtiles are defined by
\[
	\mathcal{T}(i) = \overline{\left\{ \pi\left( P(u_0 u_1 \cdots u_n) \right) \mid n \in \mathbb{N}, u_n = i \right\}}.
\]

\begin{figure}[h!tbp]
\centering
\includegraphics[scale=0.2]{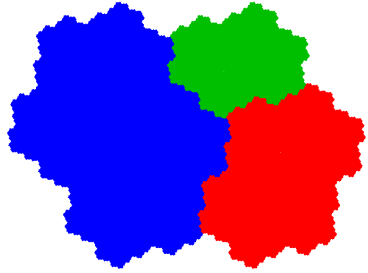}
\caption{The Rauzy fractal is the attractor of a GDIFS.}
\label{figure: Rauzy fractal}
\end{figure}

\noindent Then, the family $\{\mathcal{T}(1),\mathcal{T}(2),\mathcal{T}(3)\}$ 
satisfies the equations \cite{Arnoux&Ito:2001,Sirvent&Wang:2002}
\begin{eqnarray*}
\mathcal{T}(1) 	&=&	h(\mathcal{T}(1)) \cup h(\mathcal{T}(2)) \cup h(\mathcal{T}(3))	\\
\mathcal{T}(2) 	&=&	h(\mathcal{T}(1)) + \pi(P(1))	\\
\mathcal{T}(3) 	&=&	h(\mathcal{T}(2)) + \pi(P(1)) 
\end{eqnarray*} 
where $h$ is the similarity $\pi M_{\sigma}$.
So $(\mathcal{T}(1),\mathcal{T}(2),\mathcal{T}(3))$ is an attractor of the GDIFS 
represented in Figure~\ref{figure: GDIFS for Rauzy fractal}.

\begin{figure}[h!tbp]
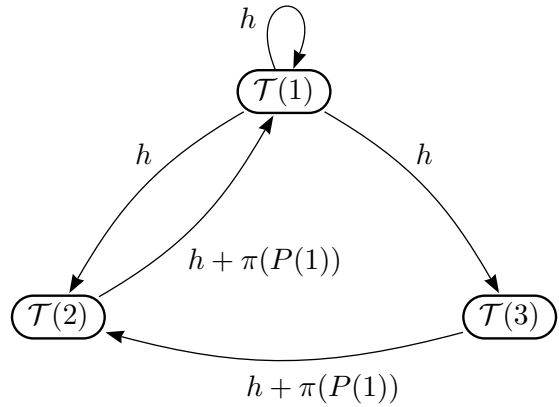

\centering
\begin{VCPicture}{(0,-1)(6,4)}
\ChgStateLineWidth{.5}
\ChgEdgeLineWidth{.5}
\ChgEdgeLabelScale{.6}
\ChgStateLabelScale{.6}
\SmallState\StateVar[\mathcal{T}(1)]{(3,3)}{1}
\SmallState\StateVar[\mathcal{T}(2)]{(0,0)}{2}
\SmallState\StateVar[\mathcal{T}(3)]{(6,0)}{3}
\ArcR{1}{2}{h}
\ArcL{1}{3}{h}
\CLoopN{1}{h}
\ArcR{2}{1}{h+\pi(P(1))}
\ArcL{3}{2}{h+\pi(P(1))}
\end{VCPicture}
\caption{GDIFS for the Rauzy fractal.}
\label{figure: GDIFS for Rauzy fractal}
\end{figure}
\end{example}

\begin{example}
The Pascal triangle modulo 2 $P$ (Figure~\ref{figure:pascal}) can be easily obtained by a GDIFS construction: we have
\[
	P = \frac{P}{2} \cup \frac{P+(0,1)}{2} \cup \frac{P+(1,1)}{2}
\]
so P is an attractor of the GDIFS represented in Figure~\ref{figure:digraph pascal}.
\begin{figure}[h!tbp]
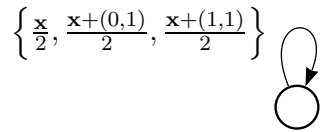

\centering
\begin{VCPicture}{(0,-0.3)(2,1.5)}
\ChgStateLineWidth{.5}
\ChgEdgeLineWidth{.5}
\ChgEdgeLabelScale{.6}

\SmallState\State[]{(1,0)}{A}
\CLoopN{A}{\left\{\frac{\mathbf{x}}{2},\frac{\mathbf{x}+(0,1)}{2},\frac{\mathbf{x}+(1,1)}{2} \right\}}
\end{VCPicture}
\caption{GDIFS associated with the Pascal triangle.}
\label{figure:digraph pascal}
\end{figure}

Similarly, the Menger sponge $MS$ (Figure~\ref{figure:sponge}) can be obtained by the following GDIFS construction: 
if  $E = \{0,1,2\}^3 \setminus \{(0,1,1),(1,0,1),(1,1,0),(1,1,1)\}$, we have
\[
	MS = \bigcup_{\mathbf{e} \in E} \frac{MS+\mathbf{e}}{3}
\] 
so MS is an attractor of the GDIFS represented in Figure~\ref{figure:digraph sponge}.
\begin{figure}[h!tbp]
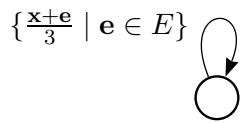

\centering
\begin{VCPicture}{(0,-0.3)(2,1.5)}
\ChgStateLineWidth{.5}
\ChgEdgeLineWidth{.5}
\ChgEdgeLabelScale{.6}

\SmallState\State[]{(1,0)}{A}
\CLoopN{A}{\{\frac{\mathbf{x}+\mathbf{e}}{3} \mid \mathbf{e} \in E\}}
\end{VCPicture}
\caption{GDIFS associated with the Menger sponge.}
\label{figure:digraph sponge}
\end{figure}
\end{example}

The previous  two examples are actually obtained by an {\em iterated function system} ({\em IFS} for short): 
the underlying directed graph is useless. 
Furthermore, they are obtained by a so called {\em homogeneous} IFS 
because all similarities are affine maps with the same contraction ratio. 
Finally, these homogeneous IFS satisfy the {\em open set condition}: 
there exists an open set $V$ such that $S_i(V) \cap S_j(V)=\emptyset$ for any two distinct similarities $S_i$ and $S_j$. 
In that setting, Feng and Wang proved, among other things, the following result which is closely linked to our work.

\begin{theorem}[Feng and Wang~\cite{Feng&Wang:2009}]
\label{thm: IFS}
Let $\Phi = \{\phi_i\}_{i=1}^N$ and $\Psi = \{\psi_j\}_{j=1}^M$ be two homogeneous IFS 
with metric spaces $\mathbb R$ embedded with the Euclidean distance, 
with contraction ratios $r_\Phi$ and $r_\Psi$ respectively,
and that satisfy the open set condition. 
Suppose that $X \subset \mathbb{R}$ is an attractor of both $\Phi$ and $\Psi$.
\begin{enumerate}
	\item	If \footnote{$\dim_H$ stands for Hausdorff dimension.} $\dim_H(X)=s<1$, 
			then $\frac{\log|r_\Phi|}{\log|r_\Psi|} \in \mathbb{Q}$;
	\item	If $\dim_H(X)=1$ and $X$ is not a finite union of intervals, 
			then $\frac{\log|r_\Phi|}{\log|r_\Psi|} \in \mathbb{Q}$.
\end{enumerate}
\end{theorem}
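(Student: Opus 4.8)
The plan is to prove the equivalent single-map formulation. Since $X$ is the attractor of $\Psi=\{\psi_j\}_{j=1}^M$, each $\psi_j$ is a similarity of ratio $r_\Psi$ with $\psi_j(X)\subset X$, so it suffices to show that any similarity $\psi$ of ratio $\lambda$ with $\psi(X)\subset X$ satisfies $\frac{\log|\lambda|}{\log|r_\Phi|}\in\mathbb{Q}$. First I would exploit the open set condition for $\Phi$: by Moran's theorem it gives $0<\mathcal{H}^s(X)<\infty$ with $s=\dim_H(X)=\frac{\log N}{-\log|r_\Phi|}$, together with a self-similar measure $\mu$ satisfying $\mu(\phi_w(X))=N^{-|w|}$ on cylinders. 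Running the same computation for $\Psi$ yields $s=\frac{\log M}{-\log|r_\Psi|}$ as well, whence the clean identity $\frac{\log|r_\Phi|}{\log|r_\Psi|}=\frac{\log N}{\log M}$. Thus the entire statement reduces to showing that $N$ and $M$, equivalently $|r_\Phi|$ and $|r_\Psi|$, are multiplicatively dependent; crucially, the dimension identity alone does \emph{not} yield this, so a genuine rigidity input is needed.

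Assume for contradiction that $|r_\Phi|$ and $|r_\Psi|$ are multiplicatively independent. The key point is that $X$ carries two \emph{exact} self-similar structures: blowing up a $\Phi$-cylinder $\phi_w(X)$ by $|r_\Phi|^{-|w|}$ recovers $X$, and likewise a $\Psi$-cylinder by $|r_\Psi|^{-|w|}$. Hence, up to the OSC-overlaps, $X$ reproduces itself at every scale in the multiplicative semigroup generated by $|r_\Phi|$ and $|r_\Psi|$. I would make this precise through tangent structure: passing to micro-sets (Hausdorff limits of pieces $X\cap I$ rescaled to unit diameter as $|I|\to0$) lets one rescale \emph{upwards} as well as downwards, so that the effective set of scalings becomes the full group, not merely the semigroup. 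Since $\log|r_\Phi|$ and $\log|r_\Psi|$ are rationally independent, the numbers $k\log|r_\Phi|+\ell\log|r_\Psi|$ ($k,\ell\in\mathbb{Z}$) are dense in $\mathbb{R}$, and a compactness argument on the (compact) space of micro-sets produces a single micro-set $Y$ invariant under a dense set of scalings, hence under \emph{all} positive scalings.

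A closed scale-invariant subset of $\mathbb{R}$ is a union of orbits $\{t\,c : t>0\}$, so it is one of $\{0\}$, a ray, a half-line, or $\mathbb{R}$; in particular its Hausdorff dimension is $0$ or $1$. On the other hand, for a self-similar set with the OSC the construction can be arranged so that the micro-set inherits the dimension $s$ of $X$ (the measure $\mu$ being exact-dimensional of dimension $s$). When $\dim_H(X)=s<1$ this forces $s\in\{0,1\}$, a contradiction, settling case~(1). For case~(2), where $s=1$, dimension no longer separates $X$ from an interval, and here I would use the extra hypotheses $\lambda>0$ and $\min(X)\in\psi(X)\subset X$. Anchoring $\psi$ at $\min X$, the profile $F(t)=\mathcal{H}^1\big(X\cap[\min X,\min X+t]\big)$ becomes simultaneously homogeneous under $|r_\Phi|$ (via the leftmost map $\phi_1$) and under $\lambda$; going up with the inverse maps $\phi_1^{-1}$ and $\psi^{-1}$ inside a fixed window $[0,\tau]$ makes the admissible scalings $\{|r_\Phi|^k\lambda^\ell\}$ dense there, so $F(t)/t$ is constant on a dense set, forcing $F$ linear near $0$. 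Thus $X$ contains an interval at its left end, and propagating this through the self-similar structure shows $X$ is a finite union of intervals, contradicting the hypothesis.

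The main obstacle is precisely the passage \emph{from} a semigroup of self-similarity scales \emph{to} genuine scale-invariance of a limiting object: the naive density argument fails because $X$ only reproduces itself at \emph{smaller} scales, so one must go through tangent/micro-set (or CP-process) machinery — in the spirit of Furstenberg's work and its refinements — in order to change scales freely while keeping the dimension under control, and one must track the OSC-overlaps carefully so that the blow-ups are honest copies of $X$ rather than proper subsets. The boundary case $s=1$ is the second delicate point, since the dimension obstruction evaporates and the whole argument has to be rerun at the level of the measure-profile $F$; this is exactly why the stronger anchoring and non-degeneracy hypotheses are imposed there.
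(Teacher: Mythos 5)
First, a point of orientation: the paper contains no proof of this statement at all. It is imported verbatim from Feng and Wang's 2009 paper as background; the paper's own contribution in this direction is the different, automata-theoretic analogue obtained by combining Theorems~\ref{thm:digraph automate}, \ref{thm: digraph implique self-similar}, \ref{thm:self-similar implique digraph} with Theorem~\ref{thm cobham version weak buchi}, which the authors explicitly describe as stronger than Theorem~\ref{thm: IFS} in some directions and weaker in others. So your proposal can only be judged on its own merits, and on those merits it has a genuine gap at its central step.

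Your setup is sound: the reduction to a single similarity $\psi$ with $\psi(X)\subset X$, the Moran--Hutchinson computation under the OSC giving $s=\frac{\log N}{-\log|r_\Phi|}=\frac{\log M}{-\log|r_\Psi|}$ and hence $\frac{\log|r_\Phi|}{\log|r_\Psi|}=\frac{\log N}{\log M}$, and the correct observation that rationality of this ratio is a rigidity statement not implied by the dimension identity. The gap is the sentence claiming that ``a compactness argument on the (compact) space of micro-sets produces a single micro-set $Y$ invariant under a dense set of scalings, hence under all positive scalings.'' Compactness (together with amenability of the group generated by the two scalings) yields an invariant probability \emph{distribution} on the space of micro-sets --- a scenery-flow-invariant, ``fractal'' distribution --- not an invariant \emph{point} of that space; two commuting continuous maps of a compact metric space, each possessing fixed points, need not have a common fixed point. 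The distinction is fatal: scale-invariant distributions on micro-sets exist with every exact dimension $s\in[0,1]$, so your classification of literally scale-invariant closed subsets of $\mathbb{R}$ (dimension $0$ or $1$) cannot be applied to what compactness actually delivers. Manufacturing a genuinely invariant set is essentially as hard as the theorem itself: for integer ratios it is the content of Furstenberg's $\times 2,\times 3$ rigidity theorem, and for real ratios the known proofs take different routes --- Feng and Wang argue via a.e.\ densities of $\mathcal{H}^s$ restricted to $X$ (Ahlfors regular and self-similar under the OSC), whose logarithmic periodicity in two incommensurable periods would force the densities to exist a.e., contradicting Marstrand's theorem that such densities exist only for integer $s$; the micro-set/CP-process route of Hochman and Shmerkin instead works with invariant distributions and compensates with entropy arguments. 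Your sketch supplies neither input.

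A secondary problem concerns case (2): you invoke the hypotheses $\lambda>0$ and $\min(X)\in\psi(X)\subset X$, but these belong to the single-map formulation quoted in the paper's introduction, not to the statement being proved here. In the two-attractor setting they must first be derived --- this is doable (some $\psi_j(X)$ contains $\min X$ since $X=\bigcup_j\psi_j(X)$, and if $r_\Psi<0$ one passes to a composition $\psi_j\circ\psi_{j'}$ of positive ratio $r_\Psi^2$ fixing $\min X$, which changes nothing in the conclusion) --- but it is missing. Moreover, the homogeneity relation $F(|r_\Phi|\,t)=|r_\Phi|\,F(t)$ underlying your density argument presupposes that no $\Phi$-cylinder other than the leftmost one charges the left end of $X$, which the OSC alone does not guarantee and which also requires justification.
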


The result we obtain with our translation is therefore stronger than Feng and Wang's one in two directions: 
it is not limited to dimension 1 and it concerns GDIFS instead of IFS. 
It is also weaker in the sense that the similarities are affine functions 
with contraction ratios of the form $1/b$ for $b \in \mathbb{Z}_\geq 2$.

Let us provide an example of GDIFS with a non-trivial directed graph.

\begin{example}
\label{ex: cantor}
Let $\mathcal{G}$ be the GDIFS whose directed graph is represented in Figure~\ref{figure: ex digraph cantor}, 
whose metric spaces are $\mathbb{R}$ with the Euclidean distance and whose similarities are $S_i: 
x \mapsto \frac{x+i}{3}$ for $i \in \{-2,0,2\}$.
If $X_T$ is the usual triadic Cantor set that consist of all real numbers in $[0,1]$ 
whose ternary expansion does not contain the digit $1$, then $\mathcal{G}$ admits the attractor 
$(X_T,-X_T,X_T \cup (-X_T))$. 
Indeed, $X_T$ corresponds to the vertex $C$, $-X_T$ to the vertex $B$ and $X_T \cup (-X_T)$ to the vertex $A$ since
\begin{eqnarray*}
	X_T 					&=& 		S_0(X_T) 	\cup 	S_2(X_T)			\\
	-X_T 				&=& 		S_0(-X_T) 	\cup 	S_{-2}(-X_T)		\\
	X_T \cup (-X_T) 		&=& 		S_0(X_T) 	\cup 	S_2(X_T)		\cup S_0(-X_T) 	\cup 	S_{-2}(-X_T)	\\
\end{eqnarray*}

\begin{figure}[h!tbp]
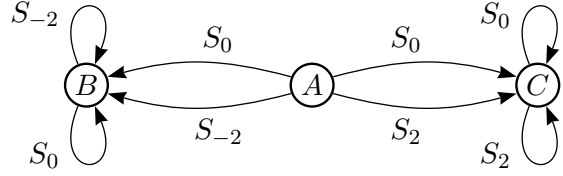

\centering
\begin{VCPicture}{(0,0)(6,2)}
\ChgStateLineWidth{.5}
\ChgStateLabelScale{.6}
\ChgEdgeLineWidth{.5}
\ChgEdgeLabelScale{.6}

\SmallState\State[A]{(3,1)}{A}
\SmallState\State[B]{(0,1)}{B}
\SmallState\State[C]{(6,1)}{D}
\ArcR{A}{B}{S_0}
\ArcL{A}{D}{S_0}
\ArcL{A}{B}{S_{-2}}
\ArcR{A}{D}{S_2}
\CLoopS{B}{S_0}
\CLoopN{B}{S_{-2}}
\CLoopN{D}{S_0}
\CLoopS{D}{S_2}
\end{VCPicture}
\caption{Underlying directed graph of the GDIFS representing $X_T \cup (-X_T)$.}
\label{figure: ex digraph cantor}
\end{figure}
\end{example}

The following result is well known.

\begin{theorem}[\cite{Edgar08}] \label{thm: digraph}
If $\mathcal{G} = (V,E,(X_v,\rho_v)_{v \in V},(S_e)_{e \in E})$ is a GDIFS 
such that all $(X_v,\rho_v)$ are nonempty complete metric spaces and all similarities $S_e$ have a ratio $r_e < 1$, 
then $\mathcal{G}$ admits a unique attractor.
\end{theorem}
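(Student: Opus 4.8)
The plan is to apply the Banach fixed point theorem to a suitable complete metric space of lists of compact sets. First I would, for each vertex $v \in V$, consider the space $\mathcal{K}(X_v)$ of nonempty compact subsets of $X_v$ endowed with the Hausdorff metric $h_v$ induced by $\rho_v$; since $(X_v, \rho_v)$ is a nonempty complete metric space, it is classical that $(\mathcal{K}(X_v), h_v)$ is again complete. Because $V$ is finite, the product $\mathcal{K} = \prod_{v \in V} \mathcal{K}(X_v)$, equipped with the metric $h((K_v)_v, (L_v)_v) = \max_{v \in V} h_v(K_v, L_v)$, is then a complete metric space.

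Next I would define the operator $F : \mathcal{K} \to \mathcal{K}$ by setting, for each $u \in V$,
\[
F\big((K_v)_{v \in V}\big)_u = \bigcup_{v \in V} \bigcup_{e \in E_{uv}} S_e(K_v).
\]
I must check that $F$ is well defined, i.e. that each component is a nonempty compact set: each $S_e(K_v)$ is compact as the continuous image of a compact set, and a finite union of compact sets is compact, while nonemptiness follows from the standing assumption of Remark~\ref{rem: infinite path} that every vertex has at least one outgoing edge. A list $(K_v)_{v \in V}$ is an attractor of $\mathcal{G}$ precisely when it is a fixed point of $F$, so it suffices to prove that $F$ admits a unique fixed point.

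The core of the argument is to show that $F$ is a contraction with ratio $r := \max_{e \in E} r_e < 1$ (the maximum exists since $E$ is finite). Two ingredients are used. First, a similarity of ratio $r_e$ scales the Hausdorff metric exactly, so that $h_u(S_e(K_v), S_e(L_v)) = r_e\, h_v(K_v, L_v)$. Second, the Hausdorff distance between finite unions is controlled by the largest of the individual Hausdorff distances, namely $h(\bigcup_i A_i, \bigcup_i B_i) \le \max_i h(A_i, B_i)$. Combining these, for every $u \in V$ one obtains
\[
h_u\big(F(K)_u, F(L)_u\big) \le \max_{v \in V} \max_{e \in E_{uv}} r_e\, h_v(K_v, L_v) \le r\, h(K, L),
\]
and taking the maximum over $u$ yields $h(F(K), F(L)) \le r\, h(K, L)$. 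The Banach fixed point theorem then provides a unique fixed point of $F$, which is exactly the unique attractor of $\mathcal{G}$.

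I expect the main obstacle to be the careful verification of the union inequality for the Hausdorff metric together with the similarity scaling identity, since these are the two estimates that make the contraction bound work; the completeness of $(\mathcal{K}(X_v), h_v)$ and the well-definedness of $F$ are routine once the finiteness of $V$ and $E$ and the outgoing-edge assumption of Remark~\ref{rem: infinite path} are invoked.
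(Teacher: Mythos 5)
Your proof is correct, and it is essentially the argument the paper points to: the paper does not prove Theorem~\ref{thm: digraph} itself but cites \cite{Edgar08}, where the result is established by exactly this generalized Hutchinson-operator argument (contraction mapping on the finite product of hyperspaces with the Hausdorff metric, using the outgoing-edge assumption of Remark~\ref{rem: infinite path} for nonemptiness). Both key estimates you isolate --- exact scaling of the Hausdorff metric under a similarity and the bound $h\bigl(\bigcup_i A_i, \bigcup_i B_i\bigr) \le \max_i h(A_i,B_i)$ for unions over a common index set --- are the standard ingredients, so nothing is missing.
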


In what follows, the complete metric spaces $(X_v,\rho_v)$ are always $\mathbb{R}^n$ with the Euclidean distance.

\subsection{Link between $\beta$-recognizable sets and sets obtained by some GDIFS construction}

From the GDIFS point of view, we do not need to take care about properties of $\beta$ 
such as being Parry or Pisot, neither about limiting the coefficients 
of the similarities to canonical alphabets such as $A_\beta$. 
Thus, we consider another notion of recognizability as follows.

\begin{definition}
Let $\beta$ be a real number greater than 1 and let $C$ be the alphabet 
$\{-c,-c+1,\dots,0,\dots,c\}$ for some $c \in \mathbb{Z}_{\geq 1}$. 
As in Section~\ref{subsection: recognizable and definable}, we defined the alphabet
\[
	{\bf C}=\underbrace{C \times \cdots \times C}_{n\text{ times}}.
\] 
A set $X \subset \mathbb{R}^n$ is {\em $(\beta,C)$-acceptable} 
if there exists a B\"uchi automaton $\mathcal{A}$ over $\mathbf{C}$ 
such that $\val_\beta(\mathbf{0} \star L(\mathcal{A})) = X$.
We say that $\mathcal{A}$ is an underlying automaton for $X$.
\end{definition}

\begin{remark}
Let $C=\{-c,\ldots,c\}$ for some $c\in \mathbb{Z}_{\ge 1}$.
A $(\beta,C)$-acceptable set is always a subset of $\left[\frac{-c}{\beta-1},\frac{c}{\beta-1} \right]^n$. 
\end{remark}

\begin{remark}
%Based on Frougny's normalization result, 
Thanks to Proposition~\ref{proposition: normalization}, Fact~\ref{fact: 3 ways to recognize} 
and Lemma~\ref{fact: transducer conserve la regularite}, if $\beta$ is a Pisot number, 
then for all alphabets $C \subset \mathbb{Z}$, any $(\beta,C)$-acceptable set is $\beta$-recognizable.
\end{remark}

\begin{definition}
Let $\mathcal{A}=( Q,A,E,I,T )$ be an automaton. A state $q \in Q$ is said to be {\em accessible} 
(resp. {\em co-accessible}) if there is a path from $I$ to $q$ (resp. from $q$ to $T$). 
An automaton is said to be {\em trim} if all its states are accessible and co-accessible. 
Given an automaton $\mathcal{A}$, we can also build a trim automaton $\mathcal{A}'$ 
that accepts the same language (just by removing non accessible and non co-accessible states). 
We say that an automaton is {\em closed} if for all states $q$ such that there is a nonempty path from $q$ to $q$, $q$ is final.
\end{definition}

\begin{remark}
If an accessible B\"uchi automaton $\mathcal{A}$ is closed then, for all infinite words $w$ labeling a path in $\mathcal{A}$ not necessarily starting from an initial state, 
$w$ is an accepting tail, i.e., there is an infinite word $v$ accepted by $\mathcal{A}$ that admits $w$ as a suffix.
\end{remark}

\begin{remark}\label{rem:closedweak}
Any closed trim B\"uchi automaton is weak: all its strongly connected components are subsets of $T$.
\end{remark}

\begin{theorem} \label{thm:digraph automate}
Let $\beta$ be a real number greater than 1 and let $C$ be the alphabet 
$\{-c,-c+1,\dots,0,\dots,c\}$ for $c \in \mathbb{Z}_{\geq 1}$.
A set $X \subset \mathbb{R}^n$ is $(\beta,C)$-acceptable set whose underlying 
trim B\"uchi automaton is closed if and only if $X$ is a finite union of compact sets 
belonging to the attractor of a GDIFS whose similarities are of the form 
$S_\mathbf{a}(\mathbf{x}) = \frac{\mathbf{x}+\mathbf{a}}{\beta}$ for $\mathbf{a} \in \mathbf{C}$
\end{theorem}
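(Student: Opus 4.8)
The plan is to build a dictionary between infinite paths in a labelled graph and the points they encode under $\val_\beta$, and to read this dictionary in both directions. The central observation is the following telescoping computation: if $S_{\mathbf{a}}(\mathbf{x}) = \frac{\mathbf{x}+\mathbf{a}}{\beta}$, then for any $\mathbf{y}$,
\[
S_{\mathbf{a}_1}\circ\cdots\circ S_{\mathbf{a}_n}(\mathbf{y}) = \frac{\mathbf{y}}{\beta^n} + \sum_{i=1}^n \frac{\mathbf{a}_i}{\beta^i},
\]
which, since $\beta>1$ and the $\mathbf{a}_i$ range over the finite set $\mathbf{C}$, converges as $n\to\infty$ to $\val_\beta(\mathbf{0}\star \mathbf{a}_1\mathbf{a}_2\cdots)$ independently of $\mathbf{y}$. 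Given a directed graph with an edge labelling by elements of $\mathbf{C}$ (that is, a GDIFS with these similarities, equivalently a B\"uchi automaton over $\mathbf{C}$), I would set $L_u$ to be the set of labels of infinite paths issued from the vertex $u$ and define $\tilde K_u := \val_\beta(\mathbf{0}\star L_u)$. Decomposing a path according to its first edge gives $\val_\beta(\mathbf{0}\star \mathbf{a}w') = S_{\mathbf{a}}(\val_\beta(\mathbf{0}\star w'))$, whence
\[
\tilde K_u = \bigcup_{e:\, u\to v} S_{\mathbf{a}(e)}(\tilde K_v).
\]
A K\"onig's lemma argument shows $L_u$ is closed in the compact space $\mathbf{C}^\omega$, and $\val_\beta$ is continuous there, so each $\tilde K_u$ is compact and, thanks to Remark~\ref{rem: infinite path}, nonempty. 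Hence $(\tilde K_u)_u$ is an invariant list, and by the uniqueness in Theorem~\ref{thm: digraph} it is the attractor: $K_u = \val_\beta(\mathbf{0}\star L_u)$ for every $u$.

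For the direction from a GDIFS to a $(\beta,C)$-acceptable set, I would take the underlying graph as the transition graph of a B\"uchi automaton over $\mathbf{C}$, declare every state terminal, and choose as initial states those vertices $u$ whose components appear in the prescribed finite union $X=\bigcup_{u\in U}K_u$. This automaton is closed (all states are final) and becomes trim after discarding inaccessible states. Because all states are final, the states visited infinitely often along any infinite run lie on a cycle and so every infinite path is accepting; its language is therefore exactly $\bigcup_{u\in U}L_u$, and by the lemma $\val_\beta(\mathbf{0}\star L(\mathcal{A})) = \bigcup_{u\in U}\tilde K_u = X$.

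For the converse, I would start from a trim closed automaton $\mathcal{A}=(Q,\mathbf{C},E,I,T)$ with $\val_\beta(\mathbf{0}\star L(\mathcal{A}))=X$ and turn its transition graph into a GDIFS by attaching the similarity $S_{\mathbf{a}}$ to each transition labelled $\mathbf{a}$. The automaton being closed (cf. Remark~\ref{rem:closedweak}), the states visited infinitely often along any infinite run are on a cycle and thus final, so again every infinite path is accepting and $L(\mathcal{A})=\bigcup_{q\in I}L_q$. Before forming the GDIFS I would prune the states from which no infinite path starts (equivalently, those unable to reach a cycle); this affects no infinite run, preserves trimness and closedness, and ensures every remaining vertex has an outgoing edge, meeting the standing assumption of Remark~\ref{rem: infinite path}. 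Theorem~\ref{thm: digraph} then yields an attractor $(K_q)_q$ with $K_q=\tilde K_q$, and $X=\val_\beta(\mathbf{0}\star L(\mathcal{A}))=\bigcup_{q\in I}K_q$ is a finite union of attractor components.

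The routine parts are the telescoping estimate and the closure and compactness bookkeeping. The step I expect to require the most care is the equivalence ``closed $\Rightarrow$ every infinite path is accepting'' together with the cleanup guaranteeing that each vertex of the GDIFS graph has an outgoing edge: one must verify that removing the states from which no infinite path emanates leaves the accepted $\omega$-language unchanged while keeping the automaton trim and closed, so that the hypotheses of Theorem~\ref{thm: digraph} are genuinely satisfied and the identity $K_u=\val_\beta(\mathbf{0}\star L_u)$ transfers faithfully across both directions.
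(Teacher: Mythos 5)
Your proof is correct and follows essentially the same route as the paper: both identify each attractor component with $\val_\beta(\mathbf{0}\star L_u)$, where $L_u$ is the set of labels of infinite paths issued from $u$, verify the invariance equation via $\val_\beta(\mathbf{0}\star\mathbf{a}\mathbf{w}) = S_\mathbf{a}\left(\val_\beta(\mathbf{0}\star\mathbf{w})\right)$, and conclude by the uniqueness of the attractor (Theorem~\ref{thm: digraph}). The differences are only in bookkeeping: you get compactness of the sets $\val_\beta(\mathbf{0}\star L_u)$ from compactness of $\mathbf{C}^\omega$ and continuity of $\val_\beta$ where the paper invokes Lemma~\ref{lemma:trim closed}, and your explicit pruning of states with no outgoing infinite path (so that Remark~\ref{rem: infinite path} and the nonemptiness requirement of Theorem~\ref{thm: digraph} are honestly met) is a detail the paper leaves implicit.
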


We first need the following lemma.

\begin{lemma} \label{lemma:trim closed}
If the underlying trim B\"uchi automaton of a $(\beta,C)$-acceptable set $X$ is closed, then $X$ is a (topologically) closed set.
\end{lemma}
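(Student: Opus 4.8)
The plan is to realize $X$ as the continuous image of a compact subset of $\mathbf{C}^\omega$, so that $X$ is itself compact and in particular topologically closed. Let $\mathcal{A}=(Q,\mathbf{C},E,I,T)$ be the trim closed Büchi automaton underlying $X$, so that $\val_\beta(\mathbf{0}\star L(\mathcal{A}))=X$. Equip $\mathbf{C}^\omega$ with the product topology; since $\mathbf{C}$ is finite, $\mathbf{C}^\omega$ is compact. Consider the map $\varphi:\mathbf{C}^\omega\to\mathbb{R}^n$ defined by $\varphi(w)=\val_\beta(\mathbf{0}\star w)=\sum_{j\ge 1} w_j\beta^{-j}$ (understood componentwise, with $w=w_1w_2\cdots$). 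Because the letters of $w$ have modulus at most $c$ in each coordinate, two words sharing their prefix $w_1\cdots w_k$ have images at distance at most $2c\beta^{-k}/(\beta-1)$, so $\varphi$ is continuous. Hence it suffices to prove that $L(\mathcal{A})$ is a \emph{closed} subset of $\mathbf{C}^\omega$: then $L(\mathcal{A})$ is compact and $X=\varphi(L(\mathcal{A}))$ is compact as well.

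The next step is to identify $L(\mathcal{A})$ with the set of labels of all infinite paths from $I$. Indeed, every infinite path of $\mathcal{A}$ is accepting: a state visited infinitely often along such a path lies on a nonempty cycle formed between two consecutive visits, hence is terminal because $\mathcal{A}$ is closed, so the path meets $T$ infinitely often. (This is exactly the content of Remark~\ref{rem:closedweak}.) Consequently
\[
L(\mathcal{A})=P:=\{w\in\mathbf{C}^\omega \mid w \text{ labels an infinite path of } \mathcal{A} \text{ starting in } I\}.
\]

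I would then prove that $P$ is closed by a König-type argument. Let $(w^{(m)})_m$ be a sequence in $P$ converging to some $w\in\mathbf{C}^\omega$. Fix $k\ge 0$; for $m$ large enough, $w^{(m)}$ and $w$ share the prefix $w_1\cdots w_k$, and since $w^{(m)}\in P$ this prefix labels a finite path from $I$. Thus for every $k$ the prefix $w_1\cdots w_k$ labels a finite path of $\mathcal{A}$ starting in $I$. The tree whose nodes are these finite paths, ordered by extension, is therefore infinite and, as $Q$ is finite, finitely branching; by König's lemma it has an infinite branch, which is an infinite path of $\mathcal{A}$ starting in $I$ and labeled by $w$. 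Hence $w\in P=L(\mathcal{A})$, so $P$ is closed.

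The main obstacle is precisely the closedness of $L(\mathcal{A})$, which fails for arbitrary Büchi automata (for instance the language of Figure~\ref{finitely-many-a} is not closed, since $a^\omega$ is a limit of words with finitely many $a$'s). It is the closedness hypothesis on $\mathcal{A}$, via Remark~\ref{rem:closedweak}, that collapses the Büchi acceptance condition into the pure safety condition ``being the label of an infinite path'', which is what makes the König argument applicable. Everything else --- the continuity of $\varphi$ and the compactness packaging --- is routine; the boundedness of $X$ inside $\left[\tfrac{-c}{\beta-1},\tfrac{c}{\beta-1}\right]^n$ is automatic from compactness of the image and is not even needed separately.
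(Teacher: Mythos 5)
Your proof is correct, and it rests on the same two pillars as the paper's: the closedness hypothesis collapses Büchi acceptance into ``labels an infinite path from $I$'' (Remark~\ref{rem:closedweak}), and a compactness/limit argument on infinite words over the finite alphabet $\mathbf{C}$, combined with continuity of $w \mapsto \val_\beta(\mathbf{0}\star w)$. The packaging, however, differs in two ways worth noting. The paper argues sequentially: it takes $\mathbf{x}_n \to \mathbf{x}$ in $X$, lifts to accepted words $\mathbf{w}_n$, extracts by pigeonhole a subsequence whose terms share longer and longer prefixes, and asserts that the limit word labels an infinite path from an initial state, hence is accepted since $\mathcal{A}$ is closed, and evaluates to $\mathbf{x}$. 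You instead prove the stronger statement that $X$ is \emph{compact}, by exhibiting it as the continuous image of $L(\mathcal{A})$, shown to be a closed subset of the compact space $\mathbf{C}^\omega$. More importantly, your explicit König's lemma step fills in a point the paper passes over silently: knowing that every finite prefix of the limit word labels a finite path from $I$ does not by itself produce a single infinite path labeled by the whole word --- one genuinely needs the finitely-branching-tree argument (or, equivalently, a simultaneous subsequence extraction on runs rather than on labels alone). So your route is slightly longer but more careful exactly where the paper is terse, and it yields compactness of $X$ as a bonus.
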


\begin{proof}
Consider a sequence $(\mathbf{x}_n)_{n \in \mathbb{N}}$ of elements of $X$ 
that converges to some $\mathbf{x} \in \mathbb{R}^n$ and let us prove that $\mathbf{x}$ belongs to $X$. 
Let $(\mathbf{w}_n)_{n \in \mathbb{N}}$ be a sequence of infinite words 
over $\mathbf{C}$ accepted by $\mathcal{A}$ such that 
$\val_\beta(\mathbf{0} \star \mathbf{w}_n) = \mathbf{x}_n$ for all $n$. 
There is no reason for which $(\mathbf{w}_n)_{n \in \mathbb{N}}$ would converge to a limit word. 
However, by the pigeonhole principle, $(\mathbf{w}_n)_{n \in \mathbb{N}}$ admits a subsequence 
$(\mathbf{w}_{k(n)})_{n \in \mathbb{N}}$ such that $\mathbf{w}_{k(n)}$ and $\mathbf{w}_{k(n+1)}$ 
share a prefix of length at least $n$. 
Thus, $(\mathbf{w}_{k(n)})_{n \in \mathbb{N}}$ converges to some word $\mathbf{w}$ 
over $\mathbf{C}$ that labels an infinite path in $\mathcal{A}$ starting from an initial state
and such that $\val_\beta(\mathbf{0} \star \mathbf{w})=\mathbf{x}$. 
Since $\mathcal{A}$ is closed, $\mathbf{w}$ is accepted by $\mathcal{A}$ so we have $\mathbf{x} \in X$.
\end{proof}

\begin{remark}
If a set $X$ is $(\beta,C)$-acceptable with underlying trim automaton $\mathcal{A}$, then its adherence is also $(\beta,C)$-acceptable and an underlying automaton for it is $\mathcal{A}$ where all states are final; this automaton is closed. 
Thus, for $\beta$ Pisot, any $(\beta,C)$-acceptable set $X \subset [-1,1]^n$ is $\beta$-recognizable and its adherence belongs to the small rectangle in Figure~\ref{fig: general picture} (page~\pageref{fig: general picture}).
\end{remark}

\begin{proof}[Proof of Theorem~\ref{thm:digraph automate}]
Let $\mathcal{A} = ( Q,\mathbf{C},E,I,T )$ be a trim B\"uchi automaton over $\mathbf{C}$ 
which is closed and such that $\val_\beta(\mathbf{0} \star L(\mathcal{A})) = X$. 
The GDIFS that we build is obtained from $\mathcal{A}$ by considering, for all states $q \in Q$, 
the complete metric space $X_q = \mathbb{R}^n$ (with the Euclidean distance) 
and by replacing the label $\mathbf{a} \in \mathbf{C}$ or each transition by the similarity 
$S_\mathbf{a}\colon \mathbf{x} \mapsto \frac{\mathbf{x}+\mathbf{a}}{\beta}$ of ratio $1/\beta <1$. 
By Theorem~\ref{thm: digraph}, the GDIFS admits a unique attractor $(K_q,\, q \in Q)$: for all $q \in Q$,
\[
	K_q = \bigcup_{p \in Q} \bigcup_{q \xrightarrow[]{\mathbf{a}} p} S_\mathbf{a}(K_p).
\]
Let us prove that 
\[
	X = \bigcup_{q \in I} K_q.
\]
For all states $q \in Q$, we let $W_q$ denote the set of infinite words that, 
starting from $q$, label an infinite path in $\mathcal{A}$ (hence the tail of an accepted run since $ \mathcal{A}$ is closed). 
We also let $Y_q$ denote the set $\{\val_\beta(\mathbf{0} \star w)\mid w\in W_q\}$. 
The automaton $\mathcal{A}$ being closed, all sets $Y_q$ are closed by Lemma~\ref{lemma:trim closed} 
and so are compact sets of $\mathbb{R}^n$ that satisfy
\[
	X = \bigcup_{q \in I} Y_q.
\]
Now let us show that $Y_q=K_q$ for all $q\in Q$.
By uniqueness of the attractor of the GDIFS, it suffices to show that the list $(Y_q, \, q \in Q)$ satisfies
\[
	\forall q \in Q, \quad Y_q = \bigcup_{p \in Q} \bigcup_{q \xrightarrow[]{\mathbf{a}} p} S_\mathbf{a}(Y_p).
\]
This is clear from the following two observations:
\begin{eqnarray*}
	\forall q \in Q, 			
	& & 
	W_q = \bigcup_{p \in Q} \bigcup_{q \xrightarrow[]{\mathbf{a}} p} \mathbf{a} W_p \\
	\forall \mathbf{w} \in \mathbf{C}^\omega, \forall \mathbf{a} \in \mathbf{C}, 
	& &
	\val_\beta (\mathbf{0} \star \mathbf{a} \mathbf{w}) = S_\mathbf{a} \left( \val_\beta(\mathbf{0} \star \mathbf{w}) \right).
\end{eqnarray*}

Now let $\mathcal{G}$ be a GDIFS with underlying directed graph $G = (V,E)$, $V = \{K_1,\dots,K_m\}$, 
complete metric spaces $\mathbb{R}^n$ with Euclidean distance and similarities of the form 
$S_\mathbf{a}\colon \mathbf{x} \mapsto \frac{\mathbf{x}+\mathbf{a}}{\beta}$ for $\mathbf{a} \in \mathbf{C}$. 
Suppose that $X$ is the union of the compact sets $K_1,\dots,K_l$, $l \leq m$.

Let $\mathcal{A}$ be the closed automaton $( V, \mathbf{C}, E, \{K_1,\dots,K_l\}, V)$ 
where the transitions correspond to the edges of $G$ 
in which we have replaced the label $S_\mathbf{a}$ by $\mathbf{a}$. 
Remark~\ref{rem: infinite path} ensures that, starting from any state $q$, 
we can read infinite words so this automaton can be viewed as a B\"uchi automaton. 
Let $Y$ be the set $\val_\beta(\mathbf{0} \star L(\mathcal{A}))$. 
By Lemma~\ref{lemma:trim closed}   $Y$ is closed since so is $\mathcal{A}$. 
We also have $Y = X$ because if we consider the GDIFS built from $\mathcal{A}$ 
as we did in the first part of the proof, we would obtain the initial GDIFS whose attractor is 
$\{K_i \mid 1 \leq i \leq m\}$ and we would get $Y = \bigcup_{1 \leq i \leq l} K_i = X$.
\end{proof}

Now let us extend the definition of $b$-kernel and of $b$-self-similar set to real numbers 
$\beta$ and arbitrary alphabets $C$. Let $\beta$ be a real number greater than 1 
and let $C$ be the alphabet $\{-c,-c+1,\dots,0,\dots,c\}$ for $c \in \mathbb{Z}_{\geq 1}$.
We let $C[\beta]$ denote the set of polynomials in $\beta$ with coefficients in $C$, i.e., 
$C[\beta] = \{\val_\beta(u \star 0^\omega) \mid u \in C^+\}$ 
and for $k \in \mathbb{Z}_{\geq 0}$, we let $C^{<k}[\beta]$ 
denote the set of polynomials of degree less than $k$ in $\beta$ with coefficients in $C$, 
i.e., $C^{<k}[\beta] = \{\val_\beta(u \star 0^\omega) \mid u \in C^{\leq k}\}$. 
We extend these notation to the multidimensional case, i.e.,
\[
	\mathbf{C}[\beta] = \underbrace{C[\beta] \times \cdots \times C[\beta]}_{n \text{ times}}	
	\quad \text{and} \quad 
	\mathbf{C}^{<k}[\beta] = \underbrace{C^{<k}[\beta] \times \cdots \times C^{<k}[\beta]}_{n \text{ times}}.
\]

\begin{definition}
A compact set $X \subset \left[\frac{-c}{\beta -1},\frac{c}{\beta -1} \right]^n$ 
is {\em $(\beta,C)$-self-similar} if its $(\beta,C)$-kernel is finite, where the {\em $(\beta,C)$-kernel} 
of $X$ is the family of sets
\[
	N_{k,\mathbf{b}}(X) = \beta^k X - \mathbf{b} \cap \left[\frac{-c}{\beta -1},\frac{c}{\beta-1} \right]^n,
\]
with $k \in \mathbb{Z}_{\geq 0}$ and $\mathbf{b} \in \mathbf{C}^{<k}[\beta]$.
\end{definition}

\begin{example}
Let us consider the notation of Example~\ref{ex: cantor}.
If $\tilde{A}_3$ is the alphabet $\{-2,-1,0,1,2\}$, then the set $X_T \cup (-X_T)$ 
is $(3,\tilde{A}_3)$-self-similar since its $(3,\tilde{A}_3)$-kernel is 
$\{X_T \cup (-X_T),X_T \cup \{-1\},-X_T \cup \{1\},X_T,-X_T,\{1\},\{-1\},\emptyset\}$.
\end{example}

\begin{theorem}
\label{thm: digraph implique self-similar}
Let $\beta$ be a Pisot number and $C$ be the alphabet 
$\{-c,-c+1,\dots,0,\dots,c\}$ for $c \in \mathbb{Z}_{\geq 1}$. If $X \subset \mathbb{R}^n$ 
is a finite union of compact sets belonging to the attractor of a GDIFS whose similarities are of the form 
$S_\mathbf{a}(\mathbf{x}) = \frac{\mathbf{x}+\mathbf{a}}{\beta}$ for $\mathbf{a} \in \mathbf{C}$, 
then $X$ is $(\beta,C)$-self-similar.
\end{theorem}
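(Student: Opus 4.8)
The plan is to reduce the statement to a finiteness property of certain algebraic integers, which is exactly where the Pisot hypothesis is used. First I would invoke Theorem~\ref{thm:digraph automate}: since $X$ is a finite union of compact sets belonging to the attractor of a GDIFS of the prescribed form, $X$ is $(\beta,C)$-acceptable with a closed trim underlying B\"uchi automaton $\mathcal{A}=(Q,\mathbf{C},E,I,T)$, all of whose states may be taken terminal, and $X=\val_\beta(\mathbf{0}\star L(\mathcal{A}))$. As in the proof of that theorem, for each state $q$ let $W_q$ be the set of infinite words labelling a path issued from $q$ and put $Y_q=\{\val_\beta(\mathbf{0}\star w)\mid w\in W_q\}=K_q$, a compact subset of the box $B:=\left[\frac{-c}{\beta-1},\frac{c}{\beta-1}\right]^n$, so that $X=\bigcup_{q\in I}Y_q$.

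Next I would obtain a closed-form description of a generic kernel element $N_{k,\mathbf{b}}(X)=(\beta^k X-\mathbf{b})\cap B$. The elementary identity $\beta^k\val_\beta(\mathbf{0}\star w)=\val_\beta(w[1\colon k]\star 0^\omega)+\val_\beta(\mathbf{0}\star\sigma^k w)$ shows, writing $u=w[1\colon k]$ and letting $q$ be a state reached from $I$ after reading $u$, that $y\in N_{k,\mathbf{b}}(X)$ if and only if $y\in B$ and $y=(P_u-\mathbf{b})+t$ for some such $u,q$ and some $t\in Y_q$, where $P_u=\val_\beta(u\star 0^\omega)\in\mathbf{C}^{<k}[\beta]$. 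The crucial use of the closedness of $\mathcal{A}$ is that it makes this description symmetric: any $t\in Y_q$ with $(P_u-\mathbf{b})+t\in B$ actually occurs, because the concatenation $uw'$ of $u$ with a path $w'\in W_q$ labels an accepted run. Hence $N_{k,\mathbf{b}}(X)=\bigcup_{(q,D)}\bigl((D+Y_q)\cap B\bigr)$, the union ranging over pairs $(q,D)$ with $D=P_u-\mathbf{b}$ for some length-$k$ word $u$ driving $\mathcal{A}$ from $I$ to $q$.

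The heart of the argument is to bound the number of relevant values $D$. A pair $(q,D)$ contributes a nonempty set only if $(D+Y_q)\cap B\neq\emptyset$; since $Y_q\subseteq B$, this forces every component of $D$ to lie in $\left[\frac{-2c}{\beta-1},\frac{2c}{\beta-1}\right]$. On the other hand each component of $D=P_u-\mathbf{b}$ has the form $\sum_{i=0}^{k-1}e_i\beta^i$ with $e_i\in\{-2c,\dots,2c\}$, hence is an algebraic integer of $\mathbb{Q}(\beta)$. Here the Pisot hypothesis enters: if $\beta^{(2)},\dots,\beta^{(d)}$ denote the Galois conjugates of $\beta$, each of modulus $<1$, then $\bigl|\sum_i e_i(\beta^{(j)})^i\bigr|\le\frac{2c}{1-|\beta^{(j)}|}$, so all conjugates of $D$ are bounded by a constant depending only on $\beta$ and $c$, uniformly in $k$. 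An algebraic integer in a field of fixed degree all of whose conjugates are bounded has bounded integer characteristic-polynomial coefficients, so there are only finitely many such numbers; thus the set $\mathcal{D}$ of admissible values $D$ is finite.

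Finally I would conclude by a counting argument. For fixed $(k,\mathbf{b})$ the set $N_{k,\mathbf{b}}(X)$ is entirely determined, through the formula above, by the finite collection $\Gamma_{k,\mathbf{b}}=\{(q,D)\in Q\times\mathcal{D}\mid \exists\,u\in\mathbf{C}^k,\ I\xrightarrow{u}q,\ D=P_u-\mathbf{b}\}$, and this formula does not otherwise depend on $k$ or $\mathbf{b}$. As $(k,\mathbf{b})$ vary, $\Gamma_{k,\mathbf{b}}$ ranges over subsets of the fixed finite set $Q\times\mathcal{D}$, so at most $2^{|Q|\,|\mathcal{D}|}$ distinct sets $N_{k,\mathbf{b}}(X)$ occur. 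Hence the $(\beta,C)$-kernel of $X$ is finite and $X$ is $(\beta,C)$-self-similar. The main obstacle is the finiteness of $\mathcal{D}$, and it is exactly this step that requires $\beta$ to be Pisot rather than merely Parry; the rest is bookkeeping, the only delicate part being the symmetric description of $N_{k,\mathbf{b}}(X)$, which rests on the closedness of $\mathcal{A}$.
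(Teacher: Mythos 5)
Your proof is correct and follows essentially the same strategy as the paper's: write each kernel element $N_{k,\mathbf{b}}(X)$ as a finite union of sets of the form (translate of an attractor piece) intersected with the box $B$, show that only finitely many translation vectors can contribute nonempty pieces, and conclude that at most $2^{(\text{finite})}$ distinct kernel elements occur. There are two differences worth noting. First, the paper never passes through the automaton: it works directly with the GDIFS graph, expanding $K_i$ along its sets $E_{ij}^\ell$ of length-$\ell$ path labels, which play exactly the role of your words $u$ driving $\mathcal{A}$ from $I$ to $q$; since the automaton of Theorem~\ref{thm:digraph automate} is built vertex-for-vertex from the GDIFS graph with $Y_q=K_q$, your first step is a repackaging rather than a new idea (and it is legitimate, as you only use the already-proved direction of that equivalence). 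Second, and more substantively, where you bound all Galois conjugates of the algebraic integers $D=P_u-\mathbf{b}$ to deduce finiteness of $\mathcal{D}$, the paper instead cites the theorem of Berend and Frougny (Theorem~\ref{lemma: frougny}), which asserts precisely that $D[\beta]\cap\left[\frac{-2c}{\beta-1},\frac{2c}{\beta-1}\right]$ is finite for $D=\{-2c,\dots,2c\}$ when $\beta$ is Pisot. Your conjugate-bounding argument is the classical proof of that direction of the cited theorem, so your write-up is self-contained where the paper's is not; both proofs correctly isolate this finiteness statement as the only point where the Pisot hypothesis (rather than, say, Parry) is needed, and the rest of the bookkeeping is identical.
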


We will need the following result where, again, algebraic properties of $\beta$ enter the picture.

\begin{theorem}[Berend and Frougny~\cite{Berend-Frougny94,Frougny-CANT}]
\label{lemma: frougny}
Let $\beta>1$ be a real number. The set $C[\beta] \cap \left[\frac{-c}{\beta -1},\frac{c}{\beta-1} \right]$ 
is finite for all alphabets $C = \{-c,-c+1,\dots,0,\dots,c\}$, $c \in \mathbb{Z}_{\geq 1}$, if and only if $\beta$ is a Pisot number.
\end{theorem}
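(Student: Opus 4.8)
The plan is to prove the two implications separately. Write $I_c = \left[\frac{-c}{\beta-1},\frac{c}{\beta-1}\right]$, so that $C[\beta]\cap I_c$ consists of the numbers $P(\beta)$ with $P\in\mathbb{Z}[X]$ having all coefficients in $\{-c,\dots,c\}$ and with $P(\beta)\in I_c$.

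\textbf{Pisot implies finiteness.} Suppose $\beta$ is Pisot of degree $d$, with conjugates $\beta=\beta^{(1)},\beta^{(2)},\dots,\beta^{(d)}$ and $|\beta^{(j)}|<1$ for $j\ge 2$. Fix $c$ and take any $x=P(\beta)\in C[\beta]\cap I_c$. Since $\beta$ is an algebraic integer and $P\in\mathbb{Z}[X]$, the number $x$ is an algebraic integer of degree at most $d$, and each of its Galois conjugates is of the form $P(\beta^{(j)})$. The idea is to bound these uniformly: the conjugate $P(\beta^{(1)})=x$ satisfies $|x|\le\frac{c}{\beta-1}$ because $x\in I_c$, while for $j\ge 2$ one has $|P(\beta^{(j)})|\le c\sum_{i\ge 0}|\beta^{(j)}|^i=\frac{c}{1-|\beta^{(j)}|}$, a bound independent of $\deg P$ precisely because $|\beta^{(j)}|<1$. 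Thus every such $x$ is an algebraic integer of degree at most $d$ all of whose conjugates lie in a fixed disk of radius $B=\max\{\frac{c}{\beta-1},\max_{j\ge 2}\frac{c}{1-|\beta^{(j)}|}\}$. The coefficients of its monic integer minimal polynomial are elementary symmetric functions of the conjugates, hence bounded integers, so only finitely many minimal polynomials and finitely many values $x$ occur. This yields finiteness for every $c$, which is the easy direction.

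\textbf{Finiteness for all $c$ implies Pisot.} I would argue the contrapositive: if $\beta$ is not Pisot, I exhibit a $c$ for which $C[\beta]\cap I_c$ is infinite, always taking $c=\lceil\beta\rceil$ so that $\rho:=\frac{c+1}{\beta}>1$. The model case is $\beta$ transcendental. Consider the $(c+1)^{N+1}$ tuples $(a_0,\dots,a_N)\in\{0,\dots,c\}^{N+1}$ and the values $\sum_{i=0}^N a_i\beta^i$, which lie in $[0,R_N]$ with $R_N=c\frac{\beta^{N+1}-1}{\beta-1}$. Partition $[0,R_N]$ into at most $\lceil\beta^{N+1}\rceil$ intervals of width $\frac{c}{\beta-1}$; by the pigeonhole principle one of them contains at least $(c+1)^{N+1}/\lceil\beta^{N+1}\rceil\ge\tfrac12\rho^{N+1}$ of the values. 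Subtracting a fixed reference value among these produces at least $\tfrac12\rho^{N+1}-1$ numbers of the form $\sum_i(a_i-a_i')\beta^i$, each with coefficients in $\{-c,\dots,c\}$ and absolute value at most the width $\frac{c}{\beta-1}$, hence lying in $C[\beta]\cap I_c$. Transcendence makes these pairwise distinct, and $\tfrac12\rho^{N+1}-1\to\infty$, so $C[\beta]\cap I_c$ is infinite.

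\textbf{Algebraic non-Pisot case.} When $\beta$ is algebraic but not Pisot, the same counting scheme applies, but the transcendence that guaranteed distinctness must be replaced by an absolute value $|\cdot|_v$ in which multiplication by $\beta$ is expanding. If $\beta$ is an algebraic integer, failing to be Pisot means it has a conjugate $\gamma$ with $|\gamma|\ge 1$; since $P(\beta)=Q(\beta)$ forces $P\equiv Q$ modulo the minimal polynomial and hence $P(\gamma)=Q(\gamma)$, the Galois map $\sigma:P(\beta)\mapsto P(\gamma)$ is well defined on $\mathbb{Z}[\beta]$, and I would certify infiniteness by showing $\sigma$ is unbounded on $C[\beta]\cap I_c$: a pigeonhole in the product of the real place (controlling $\sum a_i\beta^i\in I_c$) and the place $v$ (where $|\sum a_i\gamma^i|$ may grow like $|\gamma|^N$) yields, for every $N$, a small element whose $\sigma$-image exceeds any prescribed bound. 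If instead $\beta$ is not an algebraic integer, its minimal polynomial has leading coefficient divisible by some prime $p$, so $\beta$ has a $p$-adic conjugate with $|\beta|_p>1$; the identical argument run with $v$ the corresponding $p$-adic place again forces $C[\beta]\cap I_c$ to be infinite.

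The forward implication is routine once phrased through bounded conjugates, so I expect the main obstacle to be the converse, and specifically the \emph{distinctness} of the elements produced by the pigeonhole: for algebraic $\beta$ the relations coming from the minimal polynomial can collapse many tuples to a single value, so counting tuples alone is insufficient. The essential point is to locate a place $v$ at which $\beta$ does not contract — an archimedean conjugate of modulus at least $1$ when $\beta$ is a non-Pisot algebraic integer, and a $p$-adic place when $\beta$ is not an algebraic integer — and to exploit expansion there to separate the constructed elements. Verifying that such a place always exists for a non-Pisot $\beta$, and that the resulting two-place pigeonhole really produces infinitely many distinct small elements, is where the technical effort concentrates.
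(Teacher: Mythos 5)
First, a framing remark: the paper does not prove this statement at all — it is quoted from Berend and Frougny~\cite{Berend-Frougny94,Frougny-CANT}, and the paper only ever uses the easy direction (Pisot implies finiteness), in the proof of Theorem~\ref{thm: digraph implique self-similar}. So your attempt can only be measured against the literature. Your forward direction is correct and is the standard argument: every $x\in C[\beta]\cap\left[\frac{-c}{\beta-1},\frac{c}{\beta-1}\right]$ is an algebraic integer of degree at most $d=\deg\beta$ whose conjugates $P(\beta^{(j)})$ are bounded uniformly in $\deg P$ because $|\beta^{(j)}|<1$ for $j\ge 2$, and the Northcott-style count of minimal polynomials finishes. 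The transcendental case of the converse (pigeonhole with $c=\lceil\beta\rceil$, so that $(c+1)/\beta>1$) is also correct, including the verification that differences have coefficients in $\{-c,\dots,c\}$ and absolute value at most $\frac{c}{\beta-1}$.

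The genuine gap is the algebraic non-Pisot case, exactly where you yourself locate the technical effort, and it is twofold. (i) Your key claim — that a non-Pisot $\beta$ always admits a place at which multiplication by $\beta$ is \emph{expanding} — is false for Salem numbers: a Salem number is an algebraic integer (hence $|\beta|_v\le 1$ at every finite place) all of whose conjugates have modulus at most $1$, with at least one of modulus exactly $1$. At such a conjugate $\gamma$ the quantity $\left|\sum_{i=0}^N a_i\gamma^i\right|$ grows at most linearly in $N$, so the parenthetical ``may grow like $|\gamma|^N$'' is vacuous, and there is no expansion anywhere to exploit. (ii) Independently of (i), the announced mechanism is inverted: a pigeonhole in the product of the real place and the place $v$ produces pairs of tuples whose images are \emph{close at both places}; what you need are elements with small real value but \emph{large} $\sigma$-image, i.e., proximity at one place and separation at the other, which no pigeonhole delivers by itself. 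You correctly identify the danger that multiples of the minimal polynomial collapse exponentially many tuples onto few values (if the intersection were finite, each window of width $\frac{c}{\beta-1}$ would contain boundedly many distinct values, so all the smallness your pigeonhole produces could be realized by the zero difference), but the proposal contains no device that actually rules this collapse out. When $|\gamma|>1$ the plan can be repaired (e.g., via a product-formula/norm lower bound showing a nonzero difference that is tiny at the real place must have a large conjugate, or via Berend--Frougny's pumping of zero-representations evaluated at $\gamma$, using that $\gamma$ is not a root of unity); but the Salem case resists all of these as stated and requires a genuinely different idea — for small alphabets the analogous question was settled only by Feng in 2016 — so, as written, the converse is incomplete precisely at its hardest point.
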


%\begin{proof}
%See~\cite[Thm 2.3.31]{Frougny-CANT}.
%\end{proof}

\begin{proof}[Proof of Theorem~\ref{thm: digraph implique self-similar}]
By Theorem~\ref{thm: digraph}, the GDIFS has a unique attractor $\{K_i \mid 1 \leq i \leq m\}$. 
Suppose that $X = K_1 \cup K_2 \cup \dots \cup K_r$, $r \leq m$.
For all vertices $p$ and $q$ of the directed graph, 
we let $E_{pq}^\ell$ denote the set of words $\mathbf{a}_1 \cdots \mathbf{a}_\ell$ of length $\ell$ 
over $\mathbf{C}$ for which there is a sequence of vertices $q = q_0, \dots, q_\ell = p$ 
of the directed graph such that for all $i$, $S_{\mathbf{a}_i}$ labels an edge from $q_{i-1}$ to $q_i$.
For all $i \in \{1,\dots,m\}$ and all $\ell \geq 1$, it comes
\[
	K_i = \bigcup_{j=1}^m \bigcup_{\mathbf{a}_1 \cdots \mathbf{a}_\ell \in E_{ij}^\ell} 
		S_{\mathbf{a}_1} \circ \cdots \circ S_{\mathbf{a}_\ell}(K_j)
\]
and so
\begin{equation}
\label{eq: digraph}
	X = \bigcup_{i=1}^r \bigcup_{j=1}^m \bigcup_{\mathbf{a}_1 \cdots \mathbf{a}_\ell 
		\in E_{ij}^\ell} S_{\mathbf{a}_1} \circ \cdots \circ S_{\mathbf{a}_\ell}(K_j).
\end{equation}

Let us prove that the collection of sets 
\[
	N_{\ell,\mathbf{b}}(X) = \beta^\ell X - \mathbf{b} \cap \left[\frac{-c}{\beta -1},\frac{c}{\beta-1} \right]^n, \quad 
	\ell \in \mathbb{Z}_{\geq 0}, \, 
	\mathbf{b} \in \mathbf{C}^{\leq \ell}[\beta]
\]
is finite. Using~\eqref{eq: digraph}, we get, for some $\ell$ and $\mathbf{b}$,
\begin{eqnarray*}
	N_{\ell,\mathbf{b}}(X) 
	&=& 
	\left(
	\beta^\ell 
	\left( 
	\bigcup_{i=1}^r \bigcup_{j=1}^m \bigcup_{\mathbf{a}_1 \cdots \mathbf{a}_\ell 
			\in E_{ij}^\ell} S_{\mathbf{a}_1} \circ \cdots \circ S_{\mathbf{a}_\ell}(K_j) 
	\right)
	- \mathbf{b} 
	\right)
	\cap 
	\left[\frac{-c}{\beta -1},\frac{c}{\beta-1} \right]^n\\
	&=& 
	\bigcup_{i=1}^r \bigcup_{j=1}^m \bigcup_{\mathbf{a}_1 \cdots \mathbf{a}_\ell \in E_{ij}^\ell} 
	\left(
	\left(
	\beta^\ell 
	S_{\mathbf{a}_1} \circ \cdots \circ S_{\mathbf{a}_\ell}(K_j)	
	- \mathbf{b} 
	\right)
	\cap 
	\left[\frac{-c}{\beta -1},\frac{c}{\beta-1} \right]^n	
	\right)
\end{eqnarray*}
Let $D$ be the alphabet $\{-2c,-2c+1,\dots,0,\dots,2c\}$. 
Observe that we have
\[
	\beta^\ell \left( S_{\mathbf{a}_1} \circ \cdots \circ S_{\mathbf{a}_\ell}(K_j) \right)- \mathbf{b}
	=
	K_j + \left( \mathbf{a}_\ell + \beta \mathbf{a}_{\ell-1} + \beta^2 \mathbf{a}_{\ell-2} 
	+ \cdots + \beta^{\ell-1} \mathbf{a}_1\right) - \mathbf{b}
\]
and
\[
	x = \mathbf{a}_\ell + \beta \mathbf{a}_{\ell-1} + \beta^2 \mathbf{a}_{\ell-2} + \cdots 
		+ \beta^{\ell-1} \mathbf{a}_1 - \mathbf{b} \in \mathbf{D}[\beta],
\]
where
\[
	{\bf D}=\underbrace{D \times \cdots \times D}_{n\text{ times}}.
\]
Since $K_j$ is included in $\left[\frac{-c}{\beta -1},\frac{c}{\beta-1} \right]^n$, 
Theorem~\ref{lemma: frougny} implies that there is only a finite number of $\mathbf{x} \in \mathbf{D}[\beta]$ such that
\[
	(K_j + \mathbf{x}) \cap \left[\frac{-c}{\beta -1},\frac{c}{\beta-1} \right]^n
\]
is nonempty and this number does not depend on $\ell$.
Consequently, if $\mathcal{F}$ is the family of sets
\[
	(K_j + \mathbf{x}) \cap \left[\frac{-c}{\beta -1},\frac{c}{\beta-1} \right]^n, \quad \mathbf{x} 
	\in \mathbf{D}[\beta], \, j \in \{1,\dots,m\},
\]
then $\mathcal{F}$ is finite and any set $N_{\ell,\mathbf{b}}$ is a finite union of elements of $\mathcal{F}$. 
The $(\beta,C)$-kernel of $X$ is thus finite.
\end{proof}

\begin{theorem} \label{thm:self-similar implique digraph}
Let $\beta>1$ be a real number and $C$ be the alphabet $\{-c,-c+1,\dots,0,\dots,c\}$ for $c \in \mathbb{Z}_{\geq 1}$. 
If $X \subset \left[\frac{-c}{\beta -1},\frac{c}{\beta -1} \right]^n$ is $(\beta,C)$-self-similar, 
then $X$ belongs to the attractor of a GDIFS whose similarities are of the form 
$S_\mathbf{a}(\mathbf{x}) = \frac{\mathbf{x}+\mathbf{a}}{\beta}$ for $\mathbf{a} \in \mathbf{C}$.
\end{theorem}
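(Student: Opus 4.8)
The plan is to realize the finitely many sets in the $(\beta,C)$-kernel of $X$ as the attractor of a suitable GDIFS, in the spirit of the proof of Theorem~\ref{thm:digraph automate}. Write $\mathbf{I}=\left[\frac{-c}{\beta-1},\frac{c}{\beta-1}\right]^n$ for the reference box. Since $X$ is $(\beta,C)$-self-similar, the family $\mathcal{K}=\{N_{k,\mathbf{b}}(X)\mid k\geq 0,\ \mathbf{b}\in\mathbf{C}^{<k}[\beta]\}$ is finite, each of its members is compact (being the intersection of the compact affine image $\beta^k X-\mathbf{b}$ with the closed box $\mathbf{I}$), and $X=N_{0,\mathbf{0}}(X)$ belongs to $\mathcal{K}$. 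I would take as vertices of the underlying graph the distinct nonempty members of $\mathcal{K}$, fixing for each one a representative pair $(k,\mathbf{b})$, and, for every digit $\mathbf{a}\in\mathbf{C}$, place an edge labelled by the similarity $S_\mathbf{a}(\mathbf{x})=\frac{\mathbf{x}+\mathbf{a}}{\beta}$ from $N_{k,\mathbf{b}}(X)$ to $N_{k+1,\beta\mathbf{b}+\mathbf{a}}(X)$ (discarding edges whose target is empty). This is well posed because $\beta\mathbf{b}+\mathbf{a}\in\mathbf{C}^{<k+1}[\beta]$, so every target again lies in $\mathcal{K}$.

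The core of the argument is the invariance identity
\[
    N_{k,\mathbf{b}}(X)=\bigcup_{\mathbf{a}\in\mathbf{C}} S_\mathbf{a}\!\left(N_{k+1,\beta\mathbf{b}+\mathbf{a}}(X)\right).
\]
To prove it I would unfold both sides. Writing $\mathbf{z}=\beta^k\mathbf{x}-\mathbf{b}$ with $\mathbf{x}\in X$, one has $\mathbf{z}\in N_{k,\mathbf{b}}(X)$ exactly when $\mathbf{z}\in\mathbf{I}$, whereas $\mathbf{z}\in S_\mathbf{a}(N_{k+1,\beta\mathbf{b}+\mathbf{a}}(X))$ exactly when $\mathbf{z}=\beta^k\mathbf{x}-\mathbf{b}$ and $\beta\mathbf{z}-\mathbf{a}\in\mathbf{I}$, since $S_\mathbf{a}(\beta^{k+1}\mathbf{x}-\beta\mathbf{b}-\mathbf{a})=\beta^k\mathbf{x}-\mathbf{b}$. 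The inclusion $\supseteq$ is then immediate from $S_\mathbf{a}(\mathbf{I})\subseteq\mathbf{I}$: a coordinatewise check shows that $\beta\mathbf{z}-\mathbf{a}\in\mathbf{I}$ already forces $\mathbf{z}\in\mathbf{I}$.

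The inclusion $\subseteq$ is the delicate point, and I expect it to be the main obstacle. It requires that every $\mathbf{z}\in N_{k,\mathbf{b}}(X)$ admit a \emph{leading digit}, that is, that there exist $\mathbf{a}\in\mathbf{C}$ with $\beta\mathbf{z}-\mathbf{a}\in\mathbf{I}$; equivalently, that the relevant points lie in $\bigcup_{\mathbf{a}\in\mathbf{C}}S_\mathbf{a}(\mathbf{I})$. Coordinatewise this asks, for $z\in\left[\frac{-c}{\beta-1},\frac{c}{\beta-1}\right]$, for an integer $a\in\{-c,\dots,c\}$ with $\beta z-a$ back in the same interval. When the images $S_a\!\left(\left[\frac{-c}{\beta-1},\frac{c}{\beta-1}\right]\right)$ overlap and cover the whole interval, this holds for \emph{every} $z$ and the identity is automatic; in the remaining regime one has to use that each point of $X$ (hence of each kernel set) is genuinely $\beta$-representable over the alphabet $C$, which is the essential content to be squeezed out of $(\beta,C)$-self-similarity. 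I would isolate this representability as a separate lemma and feed it into the $\subseteq$ direction.

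Once the identity holds for every chosen vertex, the list $(N_{k,\mathbf{b}}(X))$ is an invariant list for the GDIFS constructed above, whose similarities all have ratio $1/\beta<1$ on the complete spaces $(\mathbb{R}^n,\text{Euclidean})$. By the uniqueness part of Theorem~\ref{thm: digraph}, this invariant list \emph{is} the attractor of the GDIFS. In particular the distinguished vertex $N_{0,\mathbf{0}}(X)=X$ is one of the attractor components, which is exactly the assertion to be proved. A final bookkeeping step is to verify that pruning empty kernel sets and fixing representatives leaves the identity intact and that Remark~\ref{rem: infinite path} is satisfied, i.e. every vertex has an outgoing edge; this is guaranteed as soon as each nonempty kernel set decomposes nontrivially through the identity above.
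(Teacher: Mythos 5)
Your plan is, in substance, the paper's own proof: the same vertex set (the finitely many kernel sets), the same edges ($N_{k,\mathbf{b}}(X)\to N_{k+1,\beta\mathbf{b}+\mathbf{a}}(X)$ labelled $S_\mathbf{a}$), the same invariance identity, and the same appeal to the uniqueness part of Theorem~\ref{thm: digraph}. Your $\supseteq$ argument is correct, and your diagnosis of $\subseteq$ is in fact sharper than the paper's treatment: writing $I=\left[\frac{-c}{\beta-1},\frac{c}{\beta-1}\right]$ and $f_{k,\mathbf{b}}(\mathbf{x})=\frac{\mathbf{x}+\mathbf{b}}{\beta^k}$, the paper's displayed computation passes from $f_{k,\mathbf{b}}^{-1}(X)\cap\bigcup_{\mathbf{a}\in\mathbf{C}}f_{1,\mathbf{a}}(I^n)$ to $f_{k,\mathbf{b}}^{-1}(X)\cap I^n$ with no comment, i.e.\ it silently uses the covering property $\bigcup_{\mathbf{a}\in\mathbf{C}}S_\mathbf{a}(I^n)=I^n$, which holds exactly when $2c\ge\beta-1$. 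In that regime your argument and the paper's coincide and are complete.

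The gap is the deferred lemma you propose for the remaining regime $\beta>2c+1$: it is false, so that branch of your plan cannot be carried out. Take $n=1$, $\beta=10$, $C=\{-1,0,1\}$, so $I=[-1/9,1/9]$, and $X=\{1/20\}$. For $k=1$ no integer lies within $1/9$ of $10\cdot\frac{1}{20}=\frac{1}{2}$; for $k\ge2$ the only integer within $1/9$ of $10^k\cdot\frac{1}{20}=5\cdot10^{k-2}$ is $5\cdot10^{k-2}$ itself, and it does not belong to $C^{<k}[10]$ (reducing modulo $10$ repeatedly forces the low digits to vanish and finally requires some $a\in\{-1,0,1\}$ with $a\equiv 5\pmod{10}$). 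Hence the $(10,C)$-kernel of $X$ is $\{X,\emptyset\}$, so $X$ is $(10,C)$-self-similar; but $1/20$ admits no representation $\sum_{i\ge1}a_i10^{-i}$ with $a_i\in C$ (one would need $|1/20-a_1/10|\le 1/90$, impossible), whereas every point of every component of an attractor of the prescribed form equals $\val_{10}(0\star a_1a_2\cdots)$ where the $a_i$ are the labels of an infinite path (compactness kills the remainder term). So this $X$ belongs to no such attractor, and indeed the invariance identity fails at the root: $\bigcup_{a\in C}S_a(N_{1,a}(X))=\emptyset\ne X$. The conclusion is that the statement itself is false for $\beta>2c+1$; your instinct located exactly the point where the paper's proof is unjustified, but no lemma can repair it there. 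The theorem needs the hypothesis $2c\ge\beta-1$ (which is harmless for the application to Conjecture~\ref{conjecture:AB}, where $\beta=b$ and $c=b-1$), and under that hypothesis your argument is already complete.
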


\begin{proof}
Let
\[
	\mathcal{N}(X) = \{N_{k_i,\mathbf{b}_i}(X) \mid i \in \{1,\dots,m\}, k_i 
					\in \mathbb{Z}_{\geq 0}, \mathbf{b}_i \in \mathbf{C}^{< k_i}[\beta] \}
\]
be the $(\beta,C)$-kernel of $X$. We will build a GDIFS with $m$ vertices, whose attractor is $\mathcal{N}(X)$. 
This will prove the result since $X$ belongs to $\mathcal{N}(X)$: $X= N_{0,\mathbf{0}}(X)$.

For all $k \in \mathbb{Z}_{\geq 0}$ and all $\mathbf{b} \in \mathbf{C}^{<k}[\beta]$, 
we let $f_{k,\mathbf{b}}$ denote the function $\mathbf{x} \mapsto \frac{\mathbf{x}+\mathbf{b}}{\beta^k}$. 
It comes
\[
	N_{k,\mathbf{b}}(X) = f_{k,\mathbf{b}}^{-1}(X) \cap \left[\frac{-c}{\beta -1},\frac{c}{\beta-1} \right]^n
\]
and, for all $k_1,k_2,\mathbf{b}_1,\mathbf{b}_2$,
\[
	f_{k_1 + k_2, \beta^{k_2} \mathbf{b}_1 + \mathbf{b}_2}^{-1} = f_{k_2,\mathbf{b}_2}^{-1} \circ f_{k_1,\mathbf{b}_1}^{-1}
\]
Thus, for all $k \in \mathbb{Z}_{\geq 0}$ and all $\mathbf{b} \in \mathbf{C}^{<k}[\beta]$, we have
\begin{eqnarray*}
	\bigcup_{{a} \in \mathbf{C}} f_{1,\mathbf{a}}\big (N_{k+1,\beta \mathbf{b}+\mathbf{a}}(X)\big) 
		& = &	\bigcup_{\mathbf{a} \in \mathbf{C}} f_{1,\mathbf{a}}\left(f_{k+1,\beta \mathbf{b}+\mathbf{a}}^{-1}(X) 
				\cap \left[\frac{-c}{\beta -1},\frac{c}{\beta-1} \right]^n \right)	\\
		& = &	\bigcup_{\mathbf{a} \in \mathbf{C}} f_{k,\mathbf{b}}^{-1}(X) 
				\cap f_{1,\mathbf{a}}\left(\left[\frac{-c}{\beta -1},\frac{c}{\beta-1} \right]^n \right)	\\
		& = &	f_{k,\mathbf{b}}^{-1}(X) \cap \bigcup_{\mathbf{a} \in \mathbf{C}} f_{1,\mathbf{a}}
				\left(\left[\frac{-c}{\beta -1},\frac{c}{\beta-1} \right]^n \right)	\\
		& = &	f_{k,\mathbf{b}}^{-1}(X) \cap \left[\frac{-c}{\beta -1},\frac{c}{\beta-1} \right]^n 	\\
		& = & 	N_{k,\mathbf{b}}(X)
\end{eqnarray*}
To complete the proof, we build the following GDIFS whose attractor is $\mathcal{N}(X)$:
\begin{enumerate}
	\item	the vertices of the directed graph are $v_1,\dots,v_m$, each $v_i$ corresponding to $N_{k_i,\mathbf{b}_i} (X)$;
	\item	for all vertices $v_i$ and $v_j$, there is an edge from $v_i$ to $v_j$ 
			if there is a letter $\mathbf{a} \in \mathbf{C}$ such that 
			$N_{k_j,\mathbf{b}_j} = N_{k_i+1,\beta \mathbf{b}_i+\mathbf{a}}$. 
			This edge is labeled by $f_{1,\mathbf{a}} = S_\mathbf{a}$.
\end{enumerate}
\end{proof}

We show that Conjecture~\ref{conjecture:AB} follows from
Theorem~\ref{thm cobham version weak buchi}, 
Theorem~\ref{thm:digraph automate}
and Theorem~\ref{thm:self-similar implique digraph}. 

First, notice that rational polyhedra and sets definable in $\langle\mathbb{R},\mathbb{Z},+,\le\rangle$ are two equivalent notions. More precisely, 
the theory $\langle\mathbb{R},\mathbb{Z},+,\le\rangle$ admits the elimination of quantifiers \cite{Ferrante&Rackoff}. 
As a consequence of this result, a set is definable by a first order formula in $\langle\mathbb{R},\mathbb{Z},+,\le\rangle$ 
if and only if it is a finite union of polyhedra whose vertices have rational coordinates. 
Indeed, such a set can be expressed as a finite Boolean combination of linear constraints with rational coefficients.

Let $b\ge 2$ be an integer. It is clear that  a finite union of polyhedra whose vertices have rational coordinates is $b$-self-similar. 

Let $b,b'\ge 2$ be two multiplicatively independent integers. 
Let $X\subset[0,1]^n$ be a compact set that is both $b$- and $b'$-self-similar. 
Applying first  Theorem~\ref{thm:self-similar implique digraph} and then Theorem~\ref{thm:digraph automate}, 
the base~$b$ expansions of the elements in $X$ are accepted by a trim B{\"u}chi automaton that is closed. 
From Remark~\ref{rem:closedweak} this automaton is also weak 
and we can therefore use Theorem~\ref{thm cobham version weak buchi}.

\section{Further work}

To conclude the paper, let us describe what are the constructions for which an analogue of Cobham's theorem exists. 

In Figure~\ref{fig: general picture}, the small ellipse concerns IFS 
and the analogue of Cobham's theorem is Theorem~\ref{thm: IFS} for homogeneous IFSs in $\mathbb{R}$. 
As explained in the introduction of the paper, this result can been extended to non-homogeneous IFSs 
satisfying the strong separation condition in $\mathbb{R}^n$~\cite{EKM:2010}.

In the small rectangle, an analogue of Cobham's theorem exists in $\mathbb{R}^n$ for $\beta \in \mathbb{Z}$: 
it is Theorem~\ref{thm cobham version weak buchi}.
Another one also exists in the big ellipse but the condition of independence of the integer bases is stronger: 
$b$ and $b'$ cannot share common prime factors~\cite{BBB10}.

There exist sets that do not belong to any of these cases, for instance the Rauzy fractal (Example~\ref{ex:Rauzy fractal}).
Let $\alpha$ be a complex root of the polynomial $x^3 - x^2 - x - 1$.
The Rauzy fractal is the attractor of a non-homogeneous IFS (contraction ratios are different powers of $\alpha$) 
not satisfying the strong separation condition.
It is also the attractor of a homogeneous GDIFS with contraction ratio $\alpha$. 
Finally, it is the set of complex number 
$\left\{\sum_{i=3}^{+\infty} u_i \alpha_i \mid \forall i \geq 3, \, u_i \in \{0,1\} \text{ and } u_iu_{i+1}u_{i+2} \neq 111\right\}$; 
the $\alpha$-representation of its elements are therefore accepted by some Büchi automaton.
Since these constructions do not meet the settings where an analogue of Cobham's theorem is known, 
there currently exists no result attesting that this set cannot be obtained in another way 
(through an IFS, a GDIFS or an automata for instance) with some contraction ratio ``independent" from $\alpha$.

\section{Acknowledgement}
We would like to thank Fabien Durand for many interesting discussions.
This research is supported by the internal research project F1R-MTH-PUL-12RDO2 of the University of Luxembourg.

\bibliographystyle{alpha}
\bibliography{biblio3}

\begin{thebibliography}{DGWY10}

\bibitem[AB11]{Adamczewski&Bell:2011}
B.~Adamczewski and J.~Bell.
\newblock An analogue of {C}obham's theorem for fractals.
\newblock {\em Trans. Amer. Math. Soc.}, 363(8):4421--4442, 2011.

\bibitem[AI01]{Arnoux&Ito:2001}
P.~Arnoux and S.~Ito.
\newblock Pisot substitutions and {R}auzy fractals.
\newblock {\em Bull. Belg. Math. Soc. Simon Stevin}, 8(2):181--207, 2001.
\newblock Journ{\'e}es Montoises d'Informatique Th{\'e}orique
  (Marne-la-Vall{\'e}e, 2000).

\bibitem[BB09]{Boigelot&Brusten:2009}
B.~Boigelot and J.~Brusten.
\newblock A generalization of {C}obham's theorem to automata over real numbers.
\newblock {\em Theoret. Comput. Sci.}, 410(18):1694--1703, 2009.

\bibitem[BBB08]{Boigelot&Brusten&Bruyere:2008}
B.~Boigelot, J.~Brusten, and V.~Bruy{\`e}re.
\newblock On the sets of real numbers recognized by finite automata in multiple
  bases.
\newblock In {\em Proc. 35th ICALP (Reykjavik)}, volume 5126 of {\em Lecture
  Notes in Computer Science}, pages 112--123. Springer-Verlag, 2008.

\bibitem[BBB10]{BBB10}
B.~Boigelot, J.~Brusten, and V.~Bruy{\`e}re.
\newblock On the sets of real numbers recognized by finite automata in multiple
  bases.
\newblock {\em Logical Methods in Computer Science}, 6(1), 2010.

\bibitem[BBL09]{Boigelot&Brusten&Leroux:2009}
B.~Boigelot, J.~Brusten, and J.~Leroux.
\newblock A generalization of {S}emenov's theorem to automata over real
  numbers.
\newblock In R.~A. Schmidt, editor, {\em Automated Deduction, 22nd
  International Conference, CADE 2009, McGill University, Montreal}, volume
  5663 of {\em Lecture Notes in Computer Science}, pages 469--484, 2009.

\bibitem[BBR97]{Boigelot&Bronne&Rassart:1997}
B.~Boigelot, L.~Bronne, and S.~Rassart.
\newblock An improved reachability analysis method for strongly linear hybrid
  systems.
\newblock In {\em Proc. 9th CAV}, volume 1254 of {\em Lecture Notes in Computer
  Science}, pages 167--177, 1997.

\bibitem[BD85]{BD:1985}
M.~F. Barnsley and S.~Demko.
\newblock Iterated function systems and the global construction of fractals.
\newblock {\em Proc. Roy. Soc. London Ser. A}, 399(1817):243--275, 1985.

\bibitem[BF94]{Berend-Frougny94}
D.~Berend and C.~Frougny.
\newblock Computability by finite automata and {P}isot bases.
\newblock {\em Math. Systems Theory}, 27(3):275--282, 1994.

\bibitem[BHMV94]{BHMV92}
V.~Bruy{\`e}re, G.~Hansel, C.~Michaux, and R.~Villemaire.
\newblock Logic and {$p$}-recognizable sets of integers.
\newblock {\em Bull. Belg. Math. Soc. Simon Stevin}, 1(2):191--238, 1994.
\newblock Journ{\'e}es Montoises (Mons, 1992).

\bibitem[BJW01]{Boigelot&Jodogne&Wolper:2001}
B.~Boigelot, S.~Jodogne, and P.~Wolper.
\newblock On the use of weak automata for deciding linear arithmetic with
  integer and real variables.
\newblock In {\em Automated reasoning}, volume 2083 of {\em Lecture Notes in
  Comput. Sci.}, pages 611--625. Springer, Berlin, 2001.

\bibitem[BJW05]{Boigelot&Jodogne&Wolper:2005}
B.~Boigelot, S.~Jodogne, and P.~Wolper.
\newblock An effective decision procedure for linear arithmetic over the
  integers and reals.
\newblock {\em ACM Trans. Comput. Log.}, 6(3):614--633, 2005.

\bibitem[BM89]{Bertrand89}
A.~Bertrand-Mathis.
\newblock Comment \'ecrire les nombres entiers dans une base qui n'est pas
  enti\`ere.
\newblock {\em Acta Math. Hungar.}, 54(3-4):237--241, 1989.

\bibitem[BRW98]{BRW98}
B.~Boigelot, S.~Rassart, and P.~Wolper.
\newblock On the expressiveness of real and integer arithmetic automata
  (extended abstract).
\newblock In {\em ICALP}, pages 152--163, 1998.

\bibitem[CH]{hare2013}
D.~Ho-Yuen Chan and K.~G. Hare.
\newblock A multi-dimensional analogue of {C}obham's theorem for fractals.
\newblock To appear in Proc. Amer. Math. Soc.

\bibitem[Cob69]{Cobham:1969}
A.~Cobham.
\newblock On the base-dependence of sets of numbers recognizable by finite
  automata.
\newblock {\em Math. Systems Theory}, 3:186--192, 1969.

\bibitem[DGWY10]{Daubechies2010}
I.~Daubechies, S.~G{\"u}nt{\"u}rk, Y.~Wang, and {\"O}.~Y{\i}lmaz.
\newblock The golden ratio encoder.
\newblock {\em IEEE Trans. Inform. Theory}, 56(10):5097--5110, 2010.

\bibitem[Dur11]{Durand:2011}
F.~Durand.
\newblock Cobham's theorem for substitutions.
\newblock {\em J. Eur. Math. Soc. (JEMS)}, 13(6):1799--1814, 2011.

\bibitem[DY06]{Daubechies2006}
I.~Daubechies and {\"O}.~Y{\i}lmaz.
\newblock Robust and practical analog-to-digital conversion with exponential
  precision.
\newblock {\em IEEE Trans. Inform. Theory}, 52(8):3533--3545, 2006.

\bibitem[Edg08]{Edgar08}
G.~Edgar.
\newblock {\em Measure, topology, and fractal geometry}.
\newblock Undergraduate Texts in Mathematics. Springer, New York, second
  edition, 2008.

\bibitem[EKM10]{EKM:2010}
M.~Elekes, T.~Keleti, and A.~M{\'a}th{\'e}.
\newblock Self-similar and self-affine sets: measure of the intersection of two
  copies.
\newblock {\em Ergodic Theory Dynam. Systems}, 30(2):399--440, 2010.

\bibitem[FR75]{Ferrante&Rackoff}
J.~Ferrante and C.~Rackoff.
\newblock A decision procedure for the first order theory of real addition with
  order.
\newblock {\em SIAM J. Comput.}, 4:69--76, 1975.

\bibitem[Fro92]{Frougny92}
C.~Frougny.
\newblock Representations of numbers and finite automata.
\newblock {\em Math. Systems Theory}, 25(1):37--60, 1992.

\bibitem[FS10]{Frougny-CANT}
C.~Frougny and J.~Sakarovitch.
\newblock Number representation and finite automata.
\newblock In {\em Combinatorics, automata and number theory}, volume 135 of
  {\em Encyclopedia Math. Appl.}, pages 34--107. Cambridge Univ. Press,
  Cambridge, 2010.

\bibitem[FW09]{Feng&Wang:2009}
D.-J. Feng and Y.~Wang.
\newblock On the structures of generating iterated function systems of {C}antor
  sets.
\newblock {\em Adv. Math.}, 222(6):1964--1981, 2009.

\bibitem[Hut81]{Hutchinson:1981}
J.~E. Hutchinson.
\newblock Fractals and self-similarity.
\newblock {\em Indiana Univ. Math. J.}, 30(5):713--747, 1981.

\bibitem[L{\"o}d01]{Loding:2001}
Christof L{\"o}ding.
\newblock Efficient minimization of deterministic weak {$\omega$}-automata.
\newblock {\em Inform. Process. Lett.}, 79(3):105--109, 2001.

\bibitem[Lot02]{Lothaire:2002}
M.~Lothaire.
\newblock {\em Algebraic Combinatorics on Words}, volume~90 of {\em
  Encyclopedia of Mathematics and Its Applications}.
\newblock Cambridge University Press, 2002.

\bibitem[Par60]{Parry60}
W.~Parry.
\newblock On the {$\beta $}-expansions of real numbers.
\newblock {\em Acta Math. Acad. Sci. Hungar.}, 11:401--416, 1960.

\bibitem[Pis46]{Pisot:1946}
C.~Pisot.
\newblock R\'epartition {$({\rm mod} 1)$} des puissances successives des
  nombres r\'eels.
\newblock {\em Comment. Math. Helv.}, 19:153--160, 1946.

\bibitem[PP04]{PerrinPin04}
J.-E. Pin and D.~Perrin.
\newblock {\em {Infinite Words: Automata, Semigroups, Logic and Games}}.
\newblock Elsevier, 2004.

\bibitem[Rau82]{Rauzy:1982}
G.~Rauzy.
\newblock Nombres alg\'ebriques et substitutions.
\newblock {\em Bull. Soc. Math. France}, 110(2):147--178, 1982.

\bibitem[SW02]{Sirvent&Wang:2002}
V.~F. Sirvent and Y.~Wang.
\newblock Self-affine tiling via substitution dynamical systems and {R}auzy
  fractals.
\newblock {\em Pacific J. Math.}, 206(2):465--485, 2002.

\end{thebibliography}

\end{document}